\newlength{\hchng}
\newlength{\vchng}
\newtheorem{thm}{Theorem}[section]
\newtheorem{cor}[thm]{Corollary}
\newtheorem{lemma}[thm]{Lemma}
\newtheorem{defn}[thm]{Definition}
\newtheorem{assumption}[thm]{Assumption}
\newtheorem{preremark}[thm]{Remark}
\newenvironment{remark}{\begin{preremark}\rm}{\medskip \end{preremark}}
\numberwithin{equation}{section}
\newcommand{\norm}[1]{\left\Vert#1\right\Vert}
\newcommand{\abs}[1]{\left\vert#1\right\vert}
\newcommand{\R}{\mathbb R}
\newcommand{\eps}{\varepsilon}
\newcommand{\grad} {\nabla}
\newcommand{\lap} {\triangle}
\newcommand{\bdary} {\partial}
\newcommand{\dx} {\; \mathrm{d} x}
\newcommand{\dd} {\; \mathrm{d}}
\newcommand{\dist}{\mathrm{dist}}
\newcommand{\LI}{\mathcal{L}}
\newcommand{\MLp} {\mathrm{M}^+_\mathcal{L}}
\newcommand{\MLm} {\mathrm{M}^-_\mathcal{L}}
\newcommand{\si}{\delta}
\newcommand{\Mp} {\mathrm{M}^+}
\newcommand{\Mm} {\mathrm{M}^-}
\newcommand{\D}{\mathrm{I}}
\title{Regularity results for nonlocal equations by approximation}
\author{Luis Caffarelli and Luis Silvestre}
\begin{document}

\maketitle

\begin{abstract}
We obtain $C^{1,\alpha}$ regularity estimates for nonlocal elliptic equations that are not necessarily translation invariant using compactness and perturbative methods and our previous regularity results for the translation invariant case.
\end{abstract}

\section{Introduction}
We continue with the program started in \cite{CS} to extend the existing theory for (fully non linear) second order elliptic equations to nonlocal equations. In this paper we obtain nonlocal Cordes-Nirenberg type estimates. In \cite{CS} we obtained interior $C^{1,\alpha}$ estimates for nonlocal equations that are translation invariant. In this paper we extend those results by perturbative methods to non-translation invariant equations and other situations that were not covered in the results in \cite{CS}.

The main theorem in this paper (Theorem \ref{t:main}) is presented in a somewhat abstract setting in order to make it applicable to as many situations as possible. It is a general perturbative argument that says that if an equation is uniformly close to another one which has $C^{1,\alpha}$ solutions, then the solutions of the former are also $C^{1,\alpha}$. In the last section of the paper we present concrete applications. The equations we have in mind are the purely nonlocal Isaacs equations, which have the form
\begin{equation} \label{e:main}
\D(u,x) := \inf_{\alpha \in \mathcal{A}} \sup_{\beta \in \mathcal{B}} \int_{\R^n} (u(x+y)+u(x-y)-2u(x)) (2-\sigma) \frac{a_{\alpha \beta}(x,y)}{|y|^{n+\sigma}} \dd y = f(x) \qquad \text{in } \Omega.
\end{equation}
where $a_{\alpha \beta}$ is a family of nonnegative functions with indexes $\alpha$ and $\beta$ ranging in arbitrarily sets $\mathcal{A}$ and $\mathcal{B}$. This type of integro-differential equations appear in stochastic games with jump processes. For different assumptions on the kernels $a_{\alpha \beta}$ we obtain different regularity results. Some of the most interesting applications in the last section of this paper include the following.
\begin{itemize}
\item if $0 < \lambda \leq a_{\alpha \beta} \leq \Lambda$ and $|\grad_y a_{\alpha \beta}| \leq C |y|^{-1}$, and $a_{\alpha \beta}(x,y)$ is continuous in $x$ for a modulus of continuity independent of $y$, then a solution $u$ of \eqref{e:main} is $C^{1,\eps}$ for some $\eps>0$ in the interior of $\Omega$ (with an estimate depending on $\norm{u}_{L^\infty}$).
\item if $0 < \lambda \leq a_{\alpha \beta} \leq \Lambda$ and $|\grad_y a_{\alpha \beta}| \leq C |y|^{-1}$, $a_{\alpha \beta}$ is constant in $x$, and $\Lambda - \eta \leq 1 \leq \lambda + \eta$ for a small enough $\eta$, then the solution $u$ of \eqref{e:main} is $C^{2,\eps}$ for some $\eps>0$ in the interior of $\Omega$ (with an estimate depending on $\norm{u}_{L^\infty}$).
\end{itemize}

The equations we are most interested in arise from stochastic control problems and stochastic games (See \cite{CS}). However we develop a general theory that applies as soon as some abstract assumptions are satisfied by a nonlocal equation. Note that in the expression \eqref{e:main}, it only makes sense to consider symmetric kernels $a_{\alpha \beta}(x,y)= a_{\alpha \beta}(x,-y)$. Indeed, the antisymmetric part of the kernel would have no effect when we integrate it against the second order incremental quotients of $u$ (which are inherently symmetric).

In \cite{BarlesImbert} the regularity of viscosity solutions of nonlinear integro-differential equations that are not translation invariant is also studied. They obtain H\"older continuity of the solutions. Their hypothesis are quite different from ours. They require certain modulus of continuity of the $x$ dependence in the kernels. Their equations also involve second and first order terms, and they are able to include some degenerate equations into their hypothesis. In \cite{MR1432798} a parabolic and convex integro-differential equation is considered and it is proved that the solutions are Lipschitz in space and $C^{1/2}$ H\"older continuous in time under appropriate conditions.

In the translation invariant case, the $C^{1,\alpha}$ estimate is a somewhat general fact for equations satisfying a comparison principle due to the fact that the directional derivatives of solutions satisfy some (a priori discontinuous) elliptic equation. If we try to apply the same idea for $x$-dependent elliptic equations we usually need to impose an excessive amount of regularity on the dependence. An alternative is to use compactness properties of equations and solutions (see \cite{C2} and \cite{CC}). In the local (classical PDEs) case the approximation takes place in a bounded domain (say the unit ball) and simply consists in showing (by compactness) that if the oscillation of the dependence on $x$ of the variable equation is small enough, any bounded solution is close to the solution to the translation invariant equation inheriting its regularity up to a small error. 

A good portion of the theory depends very little on the actual integral form of the operator $\D(u,x)$ and it gives rise to interesting applications. Therefore, more generally, by a nonlocal operator $\D$ we will mean a rule that assigns a value $\D(u,x)$ for any function $u$ that is $C^2(x)$ and does not grow too much at infinity. A safe assumption would be that $u$ is bounded since the kernels decay at infinity, but for particular cases it is enough to assume that $u \in L^1(\R^n,\omega)$ for a suitable weight $\omega$ related to the size of the nonlocal interactions (more concretely, a bound for the tails in the kernels $K_{\alpha \beta}$). We also assume that $\D(u,x)$ is continuous if $u \in C^2$. Degenerate ellipticity is understood in the following way, if $v \geq u$ and $v(x) = u(x)$ for some $x$ then $\D(v,x) \geq \D(u,x)$. Note that the usual elliptic second order operators can be thought of as a special case of this nonlocal elliptic operators, and also as a limiting case of integral operators of the form written above.

For every fixed $x_0$, the operator $\D_{x_0} = \D( - , x_0)$ given by $\D_{x_0} (u,x) = \D(\tau_{x-x_0} u,x_0)$, where $\tau_x u(y) = u(x+y)$, is translation invariant. This is just notation, which corresponds to \emph{freezing} the coefficients in the classical linear theory.

As mentioned above, we need compactness of the family of solutions to equations with dependence on $x$, like De Giorgi or Krylov-Safonov theory for local operators. This we are able to do in the case of \emph{uniform ellipticity} with respect to a family of linear operators. We say that $\D$ is uniformly elliptic with respect to a family of linear operators $\LI$ if
\begin{equation} \label{e:generaluniformellipticity}
 \MLm v (x) \leq \D(u+v,x) - \D(u,x) \leq \MLp v(x)
\end{equation}
where 
\begin{align*}
\MLm v (x) &:= \inf_{L \in \LI} Lv(x) \\
\MLp v (x) &:= \sup_{L \in \LI} Lv(x)
\end{align*}

This is the same as the definition in \cite{CS}, but now we do not take the operator $\D$ necessarily translation invariant, and we want to include the case that $\D$ may be a second order partial differential operator (local operator). For second order equations, this is in fact the definition in \cite{C2} with respect to the Pucci extremal operators.

The class $\LI$ represents the set of kernels that are allowed in an Isaacs equation like \eqref{e:main}. Depending on the type of regularity results that we want to obtain, we need to impose less or more conditions on these kernels. That is the reason why we need to keep a flexible definition of ellipticity with allows us to consider different classes $\LI$. A common class of operators is the class $\LI_0$ of uniformly elliptic operators of order $\sigma$ defined in \cite{CS} and also later in this paper. These are the linear operators whose kernels are comparable to that of the fractional Laplacian of order $\sigma$ but no regularity is imposed respect to $y$. It is the class for which H\"older estimates and the Harnack inequality is proved in \cite{CS}. However, in order to prove $C^{1,\alpha}$ estimates for a nonlinear problem like \eqref{e:main}, a more restrictive class is needed which imposes some regularity of the kernels respect to $y$.

For the class $\LI_0$, or any subclass of it, we can allow the functions $u$ to have certain growth at infinity. The exact condition would be that $\D(u,x)$ is well defined as long as $u \in C^{1,1}(x)$ and $u \in L^1(\R^n,\frac{1}{1+|y|^{n+\sigma}} )$ meaning that \[ \int_{\R^n} |u(y)| \frac{1}{1+|y|^{n+\sigma}} \dd y < +\infty . \]
Clearly, assuming that $u$ is bounded is a sufficient condition but it is not necessary. In particular, for $\sigma>1$, we can still compute $\D(u,x)$ if $u$ has linear growth at infinity.

The \textbf{main result} of this paper is that if some fixed equation $\D^{(0)} u(x) = 0$ has interior $C^{1,\bar \alpha}$ estimates, and $\D(-,x)$ stays close to $\D^{(0)}$ in every scale as $x$ varies in a given domain, then the equation $\D(u,x)=f(x)$ has interior $C^{1,\alpha}$ estimates for some $\alpha < \bar \alpha$. This is a nonlinear fractional-order version of the classical Cordes-Nirenberg estimate for second order elliptic equations.

For linear elliptic PDEs, the classical Cordes-Nirenberg estimate (\cite{Cordes}, \cite{Niremberg}) says that an equation
\[ a_{ij}(x) \partial_{ij} u(x) = f(x) \]
has interior $C^{1,\alpha}$ estimates, provided that $f \in L^\infty$ and $|a_{ij}(x) - \delta_{ij}| \leq \eps$ for a small enough $\eps$. In \cite{C2}, this regularity result was generalized to fully nonlinear equations. Our results in this paper recover the second order case since our estimates are uniform with respect to the order of the equation.

The main theorem in this paper is given in section \ref{s:estimate}. The proof follows more or less the idea of the proof in \cite{CC} chapter 8, but taking care of the nonlocal effects of the values of the function away from the domain of the equation. It is an approximation type argument, based on Lemma \ref{l:approximation} in section \ref{s:approximationresults} that says essentially that if two equations are very close, then so are their solutions. 

It is necessary to give a meaning for two nonlocal equations to be close that will not impose restrictive conditions in the long range interactions and will still provide the necessary compactness. For this purpose, a suitable norm is defined in a preliminary way in section \ref{s:preliminaries} for nonlocal operators. In section \ref{s:estimate}, the definition of the norm is refined so that it represents that two operators are close even when we rescale them. 

The proof of Lemma \ref{l:approximation} is by compactness, so it requires that the equations that we deal with have continuous solutions up to the boundary. This is why we have section \ref{s:boundaryregularity} where we construct appropriate barriers to show that solutions to uniformly elliptic nonlocal equations are continuous up to the boundary.

Our main theorem is presented in a very general form in section \ref{s:estimate}. So in section \ref{s:applications} we provide a list of more concrete applications. The study of the regularity properties of nonlinear integro-differential equations is a relatively new subject. This paper is part of a program trying to extend the fundamental regularity results of fully nonlinear elliptic PDEs to nonlocal equations. The family of integro-differential equations is very rich in the sense that we can come up with a very large variety of equations. We try to provide theorems that apply to a general class of equations. There is always a balance between generality of results and simplicity of exposition and at this moment the choice is not always easy to make.

\section{Preliminaries: Convergence of operators, uniform ellipticity and viscosity solutions.} \label{s:preliminaries}
In this section we introduce and discuss closeness of operators, uniform ellipticity and the notion of viscosity solutions. We also recall the regularity results of \cite{CS} and \cite{S1} for uniformly elliptic integro-differential equations. We also recall the interior $C^{1,\alpha}$ estimates from \cite{CS} with the hypothesis slightly relaxed.

We start with a definition of nonlocal operators (Definition \ref{d:operators}). We next describe in what sense we will say that two operators are close. Since our approach is based on the viscosity method, closeness is measured by testing against smooth functions (Definition \ref{d:norm}). Next we define and discuss in what context our operators are uniformly elliptic. At that point we are ready to define what a viscosity solution to our operators is (Definition \ref{d:viscositysolutions}). Then we review the regularity theorems proved in \cite{CS} and \cite{S1} about viscosity solutions.

In \cite{S1}, there is a very simple proof that solutions of integro-differential equations with variable order are H\"older continuous, as long as the order of the equation stays strictly away from $0$ and $2$. The more complicated regularity estimates in \cite{CS} hold even when the order of the equation gets arbitrarily close to $2$, however, only equations with constant order are considered. To a great extent, this was a choice for simplicity in the presentation, since many of the results in \cite{CS} would hold for equations of variable order, but the proofs would be somewhat messier. The results we present in this section are only a small variation of the results in \cite{CS}. For our main theorem in section \ref{s:estimate}, we will make a perturbation argument around an equation with interior $C^{1,\alpha}$ estimates. The applications in section \ref{s:applications} would extend with any extension of our theorems \ref{t:ca} and \ref{t:c1a} below.

We will consider an absolutely continuous weight $\omega$ depending on the type of operators that we want to deal with. The purpose of this weight is to measure the contribution of the tails of the integral in the computation of an integro-differential operator $\D u$.  Typically we have in mind $\omega(y) = 1/(1+|y|^{n+\sigma_0})$ for some $\sigma_0 \in (1,2)$, so that $u \in L^1(\R^n,\omega)$ means that the integrals in \eqref{e:main} are not singular at infinity for any $\sigma \geq \sigma_0$. Throughout this paper we always make the following assumptions on $\omega$.
\begin{align}
1+|y| &\in L^1(\R^n,\omega) \label{e:omegalinear} \\
\sup_{B_r(y)} w &\leq C(r) w(y) \label{e:omegadisks}
\end{align}

These are the only things we need to assume on the weight $\omega$ for the general theorems of section \ref{s:approximationresults}. 
The results in this paper do not apply if the order $\sigma$ of the equation is smaller than one, in that respect assumption \eqref{e:omegalinear} is reasonable. Assumption \eqref{e:omegadisks} is more technical and rules out singular measures $\omega$. 

We use the following convenient definition for a nonlocal operator.

\begin{defn} \label{d:operators}
A nonlocal operator $I$ is a rule that assigns to a function $u$ a value $\D(u,x)$ at every point $x$, satisfying the following assumptions.
\begin{itemize}
\item $\D(u,x)$ is well defined as long as $u \in C^2(x)$ and $u \in L^1(\R^n,\omega)$.
\item If $u \in C^2(\Omega) \cap L^1(\R^n,\omega)$, then $I(u,x)$ is continuous in $\Omega$ as a function of $x$.
\end{itemize}
\end{defn}

By $u \in C^2(x)$ we mean that there is a quadratic polynomial $q$ such that $u(y) = q(y) + o(|x-y|^2)$ for $y$ close to $x$.

Definition \ref{d:operators} is very general. It becomes useful when complemented with the concept of (nonlocal) uniform ellipticity \eqref{e:generaluniformellipticity}. We do not include \eqref{e:generaluniformellipticity} in the definition above because we want to define the norm of a general operator (defined below) and compute it for differences of elliptic operators.

The results in this paper are based on approximation techniques. In order to use that type of argument, we need to understand when two nonlocal operators are close to each other. Since we use the Crandall-Lions viscosity concept, it is natural to define closeness of operators by testing them against the appropriate family of functions. We define the following norm.

\begin{defn} \label{d:norm}
Given a nonlocal operator $\D$, we define $\norm{\D}$ in some domain $\Omega$ with respect to some weight $\omega$ as
\[ \begin{aligned} \norm{\D} := \sup \big\{ |\D(u,x)| / (1+M) : \ \ &x \in \Omega \\ 
                                         & u \in C^2(x) \\ 
                                         &\norm{u}_{L^1(\R^n,\omega)} \leq M \\ 
                                         &|u(y)-u(x)-(y-x)\cdot \grad u(x)| \leq M |x-y|^2 \text{ for any } y \in B_1(x)  \big\}.
                                        \end{aligned} \]
\end{defn}

The idea of this definition of $\norm{\D}$ is that $\norm{\D_k-\D} \to 0$ if $\D_k(u,x) \to \D(u,x)$ in a somewhat uniform way. The condition $\norm{u}_{L^1(\R^n,\omega)} \leq M$ could be replaced by $|u| \leq M \text{ in } \R^n$ which would make convergence weaker.

Note that in the \emph{local} case when $\D (u,x) = F(D^2 u,x)$, we have $\omega=0$ and $\norm{\D} = \tilde \beta(F)$, where $\tilde \beta$ is the one from \cite{CC}. 

Our uniform ellipticity assumption \eqref{e:generaluniformellipticity} depends on the class $\LI$ of integro-differential operators. A class $\LI$ is a set of operators $L$ of the form
\[ L u(x) = \int_{\R^n} (u(x+y) + u(x-y) - 2u(x)) K(y) \dd y, \]
for a family of nonnegative kernels $K$ satisfying $\int \min(1,|y|^2) K(y) \dd y < +\infty$.


We will make some minimal assumptions on every class $\LI$. In this whole paper, every class $\LI$ will satisfy the following two assumptions, that connect the class with the choice of the weight $\omega$.
\begin{assumption} \label{a:kernelK}
If $K = \sup K_\alpha$ is the supremum of all kernels in the class, then for every $r>0$
\[K(y) \leq C_r \omega(y) \qquad \text{if $|y| \geq r$.} \]
for some constant $C_r$ (depending on $r$).
\end{assumption}

\begin{assumption} \label{a:boundedness}
For some constant $C$ and every $L \in \LI$, we have $\norm{L} \leq C$
(which clearly implies $\norm{\MLp} \leq C$ and also $\norm{\MLm} \leq C$).
\end{assumption}

Assumption \ref{a:kernelK} says that the weight $\omega$ controls the tails of the kernels at every scale. Assumption \ref{a:boundedness} says that near the origin the kernels are uniformly controlled by $C^2$ functions. These are very weak assumptions and are all that is needed for our stability results of section \ref{s:approximationresults}. Our regularity results, however, require stronger assumptions in the class $\LI$.

In order to understand the assumptions better, let us consider $\LI$ to be the set of all fractional Laplacians $-(-\lap)^{\sigma/2}$ for $\sigma \in [\sigma_0,2]$, it would satisfy both assumptions with $\omega(y) = (2-\sigma_0)/(1+|y|^{n+\sigma_0})$. On the other hand if we take the set $\LI$ of all operators with kernels of the form $1/|y|^{n+\sigma}$ (where we are omitting the normalizing factor $\approx (2-\sigma)$), then this class would \textbf{not} satisfy Assumption \ref{a:boundedness}.

In the case of local equations, the class $\LI$ would consist of linear second order partial differential operators, and the weight $\omega$ is irrelevant (it can be taken as $\omega=0$ and also $K=0$). Our results in this case reduce to the usual results for uniformly elliptic partial differential equations \cite{CC}.

We recall the definition of viscosity solutions given in \cite{CS}.

\begin{defn}[Viscosity Solutions] \label{d:viscositysolutions}
Let $I$ be a uniformly elliptic operator in the sense of \eqref{e:generaluniformellipticity} with respect to some class $\LI$. A function $u :\R^n \to \R$, upper (lower) semi continuous in $\overline \Omega$, is said to be a subsolution (supersolution) to $\D(u,x) = f(x)$, and we write $\D(u,x)\geq f(x)$ ($\D(u,x) \leq f(x)$) in $\Omega$, if every time all the following happens
\begin{itemize}
\item $x_0$ is any point in $\Omega$.
\item $N$ is a neighborhood of $x_0$ in $\Omega$.
\item $\varphi$ is some $C^2$ function in $\overline N$.
\item $\varphi(x_0) = u(x_0)$.
\item $\varphi(x) > u(x)$ ($\varphi(x) < u(x)$) for every $x \in N \setminus \{x_0\}$.
\end{itemize}
Then if we choose as test function $v$, \[ v := \begin{cases}
               \varphi &\text{in } N \\
	       u &\text{in } \R^n \setminus N \ ,
              \end{cases} \]
we have $\D(v,x_0) \geq f(x_0)$ ($\D(v,x_0) \leq f(x_0)$).

A solution is a function $u$ which is both a subsolution and a supersolution.
\end{defn}

The first theorem we borrow from \cite{CS} is the interior H\"older estimates. We change the assumption of $u$ being bounded for $u \in L^1(\R^n,1/(1+|y|^{n+\sigma_0}))$. Let us first recall the class $\LI_0$ of integro differential operators.

An integro differential operator $L$ belongs to $\LI_0$ if its kernel $K$ satisfies 
\begin{equation} \label{e:uniformellipticity} 
 (2 - \sigma) \frac{\lambda}{|y|^{n+\sigma}}\leq K(y) \leq (2 - \sigma) \frac{\Lambda}{|y|^{n+\sigma}} \, .
\end{equation}
In this case $\Mp_{\LI_0}$ and $\Mm_{\LI_0}$ take a very simple form: 
\begin{align}
\Mp_{\LI_0} v(x) &= (2 - \sigma) \int_{\R^n} \frac{\Lambda \si v(x,y)^+ - \lambda \si v(x,y)^-}{|y|^{n+\sigma}} \dd y \label{e:Mp}\\
\Mm_{\LI_0} v(x) &= (2 - \sigma) \int_{\R^n} \frac{\lambda \si v(x,y)^+ - \Lambda \si v(x,y)^-}{|y|^{n+\sigma}} \dd y \ . \label{e:Mm} 
\end{align}
where $\si$ represents the second order incremental quotients $\si u(x,y) = u(x+y) + u(x-y) - 2u(x)$.

The class $\LI_0$ depends on the parameter $\sigma \in (0,2)$. Whenever it is necessary, we will stress that dependence by writing $\LI_0(\sigma)$.

The next theorem concerns equations with kernels that can be arbitrarily discontinuous in terms of $x$. In particular, it applies to the difference of two solutions of a nonlinear integral operator.

\begin{thm} \label{t:ca}
Let $\sigma>\sigma_0$ for some $\sigma_0>0$. Let $u$ be function in $L^1(\R^n,\omega)$ for $\omega=1/(1+|y|^{n+\sigma_0})$, such that $u$ is continuous in $\overline B_1$ and 
\begin{align*}
\Mp_{\LI_0} u &\geq -C_0 \qquad \text{in } B_1 \\
\Mm_{\LI_0} u &\leq C_0 \qquad \text{in } B_1
\end{align*}
then there is an $\alpha > 0$ (depending only on $\lambda$, $\Lambda$, $n$ and $\sigma_0$) such that $u \in C^\alpha(B_{1/2})$ and
\[ u_{C^\alpha(B_{1/2})} \leq C \big( \sup_{B_1} |u| + \norm{u}_{L^1(\R^n,\omega)} + C_0 \big) \]
for some constant $C>0$ which depends on $\sigma_0$, $\lambda$, $\Lambda$ and dimension, but not on $\sigma$.
\end{thm}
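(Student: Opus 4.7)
The plan is to reduce to the bounded-$u$ H\"older estimate of \cite{CS} by truncating $u$ outside $B_1$. Since $u$ is continuous on $\overline{B_1}$, the quantity $M := \sup_{\overline B_1} |u|$ is finite, and by the Tietze extension theorem followed by truncation at height $M$ there is a continuous $\tilde u : \R^n \to \R$ with $\tilde u \equiv u$ on $\overline B_1$ and $\norm{\tilde u}_{L^\infty(\R^n)} \leq M$.

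The crucial step is to show that $\tilde u$ satisfies, in the viscosity sense on $B_{1/2}$,
\[ \Mp_{\LI_0} \tilde u \geq -\tilde C_0 \qquad\text{and}\qquad \Mm_{\LI_0} \tilde u \leq \tilde C_0, \qquad \tilde C_0 := C_0 + C\bigl(M + \norm{u}_{L^1(\R^n,\omega)}\bigr), \]
with $C$ depending only on $n$, $\Lambda$, $\sigma_0$. Any smooth $\varphi$ touching $\tilde u$ from above at some $x_0 \in B_{1/2}$ also touches $u$ from above there, since $\tilde u = u$ on a neighborhood of $x_0$ inside $B_1$. Let $v_{\tilde u}$ and $v_u$ be the corresponding test functions from Definition \ref{d:viscositysolutions}, both equal to $\varphi$ on a touching neighborhood $N \subset B_1$ and equal to $\tilde u$, $u$ respectively outside $N$. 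Their difference vanishes on $N$ and equals $\tilde u - u$ outside $N$; since $\tilde u = u$ on $B_1$, it vanishes on all of $B_1$. Using the kernel bound $(2-\sigma)/|y|^{n+\sigma} \leq C\omega(y)$ for $|y|\geq 1/2$, $\sigma\geq \sigma_0$ (uniform in $\sigma$) in the integral representation \eqref{e:Mm}, together with \eqref{e:omegadisks}, one obtains $|\Mm_{\LI_0}(v_{\tilde u} - v_u)(x_0)| \leq C\bigl(M + \norm{u}_{L^1(\R^n,\omega)}\bigr)$. The inequality $\Mp(f+g) \geq \Mp f + \Mm g$, applied with $f = v_u$ and $g = v_{\tilde u} - v_u$, combined with the viscosity inequality $\Mp v_u(x_0) \geq -C_0$, then gives the lower bound on $\Mp v_{\tilde u}(x_0)$; the upper bound on $\Mm v_{\tilde u}(x_0)$ is symmetric.

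With this in hand, $\tilde u$ is bounded by $M$, continuous, and a viscosity solution of the two-sided Pucci inequalities with right-hand sides $\pm\tilde C_0$ on $B_{1/2}$. Applying the bounded-$u$ H\"older estimate of \cite{CS} after the harmless rescaling $B_{1/2} \to B_1$ (whose constants are uniformly bounded for $\sigma \in [\sigma_0, 2]$) yields $\tilde u \in C^\alpha(B_{1/4})$ with $[\tilde u]_{C^\alpha(B_{1/4})} \leq C(M+\tilde C_0) \leq C\bigl(M + C_0 + \norm{u}_{L^1(\R^n,\omega)}\bigr)$. Since $\tilde u = u$ on $B_1$, this is the desired estimate on $B_{1/4}$, and the estimate on $B_{1/2}$ follows by applying the same truncation--rescaling argument centered at each $x_0 \in B_{1/2}$ on the ball $B_{1/2}(x_0)\subset B_1$ and covering. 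I expect the main delicate point to be the careful bookkeeping of the viscosity inequalities at the level of the test functions of Definition \ref{d:viscositysolutions}, so that the kernel tail bound stays uniform in $\sigma \geq \sigma_0$ and the final constants depend only on $\sigma_0$, $\lambda$, $\Lambda$, and $n$.
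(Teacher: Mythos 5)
Your proposal is correct and follows essentially the same strategy as the paper: truncate $u$ to a bounded function agreeing with $u$ on $B_1$, check that the Pucci inequalities survive with an extra error term controlled by $\norm{u}_{L^1(\R^n,\omega)}$ (since the kernels are uniformly bounded by $C\omega$ away from the origin for $\sigma\geq\sigma_0$), and invoke the bounded-function H\"older estimate of \cite{CS}. The only cosmetic difference is that the paper simply takes $\tilde u = \chi_{B_1} u$ rather than a Tietze extension, which works just as well because the discontinuity across $\partial B_1$ lies outside the interior ball where the viscosity inequalities are tested.
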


\begin{proof}
Consider $\tilde u(x) = \chi_{B_1}(x) u(x)$. A direct computation shows that $\Mp \tilde u \geq -C_0 - c \norm{u}_{L^1(\R^n,\omega)}$ and $\Mm \tilde u \leq C_0 + c \norm{u}_{L^1(\R^n,\omega)}$ in $B_{3/4}$. We can now apply theorem 12.1 from \cite{CS} to $\tilde u$. 
\end{proof}

Theorem \ref{t:ca} is a nonlocal version of Krylov-Safonov theorem for elliptic PDEs in nondivergence form. A nonlocal version of De Giorgi-Nash-Moser theorem can be found in \cite{kassmann2009priori}.


We will also mention, as a consequence of the previous theorem, the interior $C^{1,\alpha}$ estimates from \cite{CS} for functions with some growth at infinity.

Given $\rho_0>0$, we define the class $\LI_\ast$ by the operators $L$ with kernels $K$ such that
\begin{align} 
(2-\sigma)\frac{\lambda}{|y|^{n+\sigma}} &\leq K(y) \leq (2-\sigma)\frac{\Lambda}{|y|^{n+\sigma}} \label{e:lic1a1}\\ 
|\grad K(y)| &\leq C \omega(y) \qquad \text{in } \R^n \setminus B_{\rho_0} \label{e:lic1a2}
\end{align}

\begin{thm} \label{t:c1a} Assume $\sigma \geq \sigma_0>0$. There is a $\rho_0>0$ (depending on $\lambda$, $\Lambda$, $\sigma_0$ and $n$) so that if $\D$ is a nonlocal translation invariant uniformly elliptic operator with respect to $\LI_\ast$ and $u$ is a continuous function in $\overline B_1$ such that $u \in L^1(\R^n,\omega)$ for $\omega=1/(1+|y|^{n+\sigma_0})$, $\D u = 0$ in $B_1$,
then there is a universal (depends only on $\lambda$, $\Lambda$, $n$ and $\sigma_0$) $\alpha>0$ such that $u \in C^{1+\alpha}(B_{1/2})$ and
\[ u_{C^{1+\alpha}(B_{1/2})} \leq C\left( \sup_{B_1} |u| + \norm{u}_{L^1(\R^n,\omega)} +|\D 0| \right) \]
for some constant $C>0$ (where by $\D 0$ we mean the value we obtain when we apply $\D$ to the function that is constant equal to zero). The constant $C$ depends on $\lambda$, $\Lambda$, $\sigma_0$, $n$ and the constant in \eqref{e:lic1a2}.
\end{thm}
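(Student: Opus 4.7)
The plan is to reduce to the bounded-$u$ version of the interior $C^{1,\alpha}$ estimate already proved in \cite{CS}, using exactly the same truncation trick that appears in the proof of Theorem \ref{t:ca} above. The only work is to check that the tails of $u$ contribute only a term comparable to $\norm{u}_{L^1(\R^n,\omega)}$ to the equation satisfied by the truncated function, uniformly in $\sigma \geq \sigma_0$.

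First I would pick a fixed smooth cutoff $\eta$ with $\eta \equiv 1$ in $B_{3/4}$ and $\supp \eta \subset \overline{B}_{7/8}$, and set $\tilde u := \eta \, u$. Since $u$ is continuous on $\overline B_1$, the function $\tilde u$ is bounded on $\R^n$, continuous, and coincides with $u$ on $B_{3/4}$. The point is to show that $\tilde u$ satisfies $|\D \tilde u| \leq C \norm{u}_{L^1(\R^n,\omega)}$ in $B_{1/2}$, after which the bounded-data $C^{1,\alpha}$ estimate from \cite{CS} for translation-invariant operators uniformly elliptic with respect to $\LI_\ast$ applies directly to $\tilde u$, and the estimate transfers to $u$ because $\tilde u = u$ on $B_{3/4}$.

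To get the pointwise bound on $\D \tilde u$, write $v := \tilde u - u$; then $v \equiv 0$ on $B_{3/4}$ and $|v(z)| \leq |u(z)|$ everywhere. By the translation invariant uniform ellipticity \eqref{e:generaluniformellipticity} and $\D u = 0$ in $B_1$,
\[
\MLm v(x) \;\leq\; \D \tilde u(x) \;\leq\; \MLp v(x) \qquad \text{for every } x \in B_{1/2}.
\]
For any $L \in \LI_\ast$ and any such $x$, the integrand in $Lv(x)$ is supported in $\{|y| \geq 1/4\}$ (because $v(x)=0$ and $v(x\pm y) = 0$ when $x\pm y \in B_{3/4}$), where Assumption \ref{a:kernelK} gives $K(y) \leq C \omega(y)$. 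A change of variables combined with \eqref{e:omegadisks} then yields
\[
\int_{|y|\geq 1/4}\!\! |u(x\pm y)|\,\omega(y) \,\dd y \;\leq\; C \int_{\R^n} |u(z)|\,\omega(z)\,\dd z \;=\; C\,\norm{u}_{L^1(\R^n,\omega)},
\]
so $|Lv(x)| \leq C\,\norm{u}_{L^1(\R^n,\omega)}$ uniformly in $L \in \LI_\ast$ and in $\sigma \geq \sigma_0$. Taking sup and inf gives the desired bound $|\D \tilde u(x)| \leq C \norm{u}_{L^1(\R^n,\omega)}$ on $B_{1/2}$.

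At this point the hypotheses of the bounded version of the interior $C^{1,\alpha}$ estimate from \cite{CS} are met by $\tilde u$ on $B_{1/2}$: $\tilde u$ is bounded on $\R^n$, $\D$ is translation invariant and uniformly elliptic with respect to $\LI_\ast$, and the right-hand side $\D \tilde u$ is in $L^\infty(B_{1/2})$. That theorem gives $\tilde u \in C^{1+\alpha}$ on a smaller interior ball with the claimed universal constants; a standard cover of $B_{1/2}$ by shifted and rescaled balls (applying the same truncation argument centered at any $x_0 \in B_{1/2}$) produces the full interior estimate for $u = \tilde u|_{B_{3/4}}$, with a constant depending on $\lambda,\Lambda,\sigma_0,n$ and the constant in \eqref{e:lic1a2}. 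The one step that needs care, rather than routine bookkeeping, is the tail estimate in the second paragraph: establishing the $\sigma$-independent pointwise control on $Lv(x)$ by $\norm{u}_{L^1(\R^n,\omega)}$ is what makes the translation-invariant estimate robust under the relaxation of ``$u$ bounded'' to ``$u \in L^1(\R^n,\omega)$'', and it is exactly the point where Assumptions \ref{a:kernelK}, \ref{a:boundedness} and \eqref{e:omegadisks} must work in concert.
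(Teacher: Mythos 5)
The truncation idea is reasonable and the tail estimate in your second paragraph is carried out correctly, but the final step --- ``the bounded-data $C^{1,\alpha}$ estimate from \cite{CS} applies directly to $\tilde u$'' --- is where the argument breaks down. After multiplying by the cutoff, $\tilde u$ satisfies $\D\tilde u = g$ in $B_{1/2}$ where $g(x) = \D\tilde u(x)$ is merely \emph{bounded}, not identically zero (and not constant, since the contribution of the tails of $v=\tilde u - u$ genuinely varies with $x$). Theorem 12.1 of \cite{CS} proves interior $C^{1,\alpha}$ regularity for translation-invariant $\D$ under the hypothesis $\D u = 0$; there is no version in \cite{CS} for a nonzero $L^\infty$ right-hand side. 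This is not a small omission: for fully nonlinear equations, passing from $\D u=0$ to $\D u = g$ with $g\in L^\infty$ is exactly a Cordes--Nirenberg--type result, which is what Theorem \ref{t:main} of the present paper is set up to prove, and Theorem \ref{t:c1a} serves as one of its inputs. So the route you propose is circular.

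To see why the difficulty is real and not just a citation issue, note that if one tries to run the difference-quotient argument on $\tilde u$, the elliptic inequality for $w^h=(\tilde u(x+h)-\tilde u(x))/|h|$ picks up the term $(g(x+h)-g(x))/|h|$, which is only $O(\norm{g}_\infty/|h|)$ and blows up as $|h|\to 0$; one would need $g$ to be H\"older to make the iteration close, and proving that $g$ is H\"older (via $\MLm v \leq g \leq \MLp v$ and the regularity of the extremal operators applied to $v$) does not go through because $g$ is only sandwiched, not equal to $\MLp v$. The paper's proof avoids the issue entirely by \emph{not} truncating $u$: it forms the difference quotients $w^h=(u(x+h)-u(x))/|h|^\beta$ directly from the original equation $\D u=0$ (so no right-hand side appears), and then truncates $w^h=w^h_1+w^h_2$ with $w^h_2$ vanishing in an interior ball and $w^h_1$ bounded. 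The $C^\alpha$ estimate of Theorem \ref{t:ca} --- which, unlike the $C^{1,\alpha}$ estimate, \emph{does} accept a bounded right-hand side --- is then applied to $w^h_1$ and iterated over $\beta=\alpha,2\alpha,\dots,1$. The crucial bound, and the place where hypothesis \eqref{e:lic1a2} and the $L^1(\R^n,\omega)$ assumption on $u$ actually interact, is
\[ \int_{\R^n\setminus B_\rho} |u(x+y)|\,\frac{|K(y)-K(y-h)|}{|h|}\,\dd y \leq C, \]
which controls the tail contribution to the equation for $w^h_1$. Your version never arrives at this estimate, which is the real content of relaxing $u\in L^\infty$ to $u\in L^1(\R^n,\omega)$.
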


\begin{proof}[Sketch of the proof.]
The proof of this theorem goes along the lines of the proof of Theorem 12.1 in \cite{CS} by applying Theorem \ref{t:ca} to the differential quotients $w^h = (u(x+h)-u(x))/|h|^\beta$ for $\beta = \alpha, 2\alpha, \dots, 1$. In each step, the quotient $w^h$ is written as $w^h = w^h_1 + w^h_2$ where $w^h_2$ vanishes in a ball $B_r$ (with $1/2 < r <1$) and $w_1^h$ is bounded in $\R^n$, $\Mm w^h_1 \leq C$ and $\Mp w^h_1 \geq -C$ in $B_r$. This constant $C$, like in the proof of theorem 12.1 in \cite{CS}, comes from the estimate
\[ \int_{\R^n \setminus B_\rho} |u(x+y)| \frac{ |K(y) - K(y-h)| }{|h|} \dd y  < C .\]
In \cite{CS} the assumption was $u \in L^\infty$ whereas now we have $u \in L^1(\R^n,\omega)$ and so is why the assumption (12.2) in \cite{CS} necessary for the bound of the constant $C$ is different from our assumption \eqref{e:lic1a2} in this paper.
\end{proof}

Note that $\sup_{B_1} |u| + \norm{u}_{L^1(\R^n,\omega)} \leq C \sup |u(x)|/(1+|x|^{1+\alpha})$ for any $\alpha < \sigma_0$.

In comparison with $\LI_0$, the class $\LI_\ast$ adds the extra regularity assumption \eqref{e:lic1a2} to the kernels with respect to $y$. This condition is needed in order to prove the interior $C^{1,\alpha}$ estimates in the ball of radius one. However, the hypothesis is not scale invariant, which makes it not suitable to the type perturbation arguments used later in this paper. Therefore, in section \ref{s:estimate} we will further restrict this class in order to make it scale invariant.

\begin{remark}
Assumption \eqref{e:lic1a2} is used to control the effect on the regularity of $u$ at $x$ of the points $y$ that are at least $\rho_0$ away from $x$. This condition depends on the type of control we have of $u$ far from $x$. In \cite{CS} we were assuming the stronger condition $u \in L^\infty(\R^n)$, so we had the weaker condition for the kernels
\[\int_{\R^n \setminus B_{\rho_0}} \frac{|K(y)-K(y-h)|}{|h|} \dd y \leq C \qquad \text{every time $|h|<\frac {\rho_0} 2$}\]
Now we have the weaker condition $u \in L^1(\R^n,\omega)$ instead. If we want our estimate to depend on different norms of $u$, our condition \eqref{e:lic1a2} will change accordingly.
\end{remark}

It is also convenient to define $M^+_2$ and $M^-_2$ as
\begin{align*}
M^+_2 u(x) = \lim_{\sigma \to 2} M^+_{\LI_0(\sigma)} u(x) \\
M^-_2 u(x) = \lim_{\sigma \to 2} M^-_{\LI_0(\sigma)} u(x)
\end{align*}

A simple computation shows that $M^+_2 u(x)$ is a second order uniformly elliptic operator of the form $M^+_2 u(x) = F(D^2 u)$ with
\[ F(A) := \int_{S^{n-1}} \Lambda \langle y , A y \rangle^+ - \lambda \langle y , A y \rangle^- \dd S. \] 
Note that the ellipticity constants $\tilde \lambda$ and $\tilde \Lambda$ of $M^+_2$ as a partial differential operator depend only on $\lambda$, $\Lambda$ and dimension. Also $F(0)=0$, so $M^+_2 u(x) \leq M^+(D^2 u)$ where $M^+(D^2 u)$ is the usual Pucci operator with ellipticity constants $\tilde \lambda$ and $\tilde \Lambda$. Naturally, we also have the corresponding relations for $M^-_2$.

\section{Boundary regularity} \label{s:boundaryregularity}
In this section we show that if we have a modulus of continuity on the boundary of the domain of some equation, then we can find a (possibly different) modulus of continuity inside the domain. This is done, as usual, by controlling the growth of $u$ away from its boundary data through barriers, scaling and interior regularity. 

We start by showing that a truncation of an appropriate radial function is a barrier outside the unit ball. Our barrier function is good as a supersolution $\Mp \varphi \leq 0$ for all values of $\sigma$ greater than a given $\sigma_0$, where $\Mp$ is the maximal operator $\Mp_{\LI_0(\sigma)}$. Another way to say this would be to define a larger class $\LI$ which is the union of all classes $\LI_0(\sigma)$ for $\sigma \in (\sigma_0,2)$, then $\MLp \varphi \leq 0$.

Using this barrier function we will prove that the solutions of uniformly elliptic equations are continuous on the boundary, and then using their interior H\"older regularity (Theorem \ref{t:ca}) we get that they are continuous up to the boundary in the domain of the equation. The result in Theorem \ref{t:boundarycontinuity} is restricted to equations of constant order only because Theorem \ref{t:ca} is. In fact, the same continuity result holds for operators of variable order as long as the order remains strictly in between $0$ and $2$ (because of the results in \cite{S1}).

\begin{lemma} \label{l:barrier}
Given any $\sigma_0 \in (0,2)$, there is an $\alpha>0$ and $r>0$ small so that the function $u(x) = ((|x|-1)^+)^\alpha$ satisfies $M^+ u(x) \leq 0$ in $B_{1+r} \setminus B_1$ for any $\sigma>\sigma_0$.
\end{lemma}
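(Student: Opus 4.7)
The plan is to use rotational symmetry to reduce to a one-parameter family of points and then rescale, turning the problem into a fixed computation on a limiting barrier in $\R^n$. By rotational invariance of both $u$ and $M^+_{\LI_0(\sigma)}$, it suffices to prove the inequality at $x = (1+s)e_1$ for $s \in (0, r)$. Setting $\tilde u_s(z) := s^{-\alpha} u(x+sz)$ and substituting $y = sz$ gives the scaling identity $M^+ u(x) = s^{\alpha-\sigma} M^+ \tilde u_s(0)$. Expanding $|x+sz|^2 = 1 + 2s(1+z_1) + O(s^2(1+|z|^2))$ shows $\tilde u_s(z) \to v_0(z) := (1+z_1)_+^\alpha$ pointwise, with a uniform-in-$s$ envelope $\tilde u_s(z) \leq (1+|z|)^\alpha$. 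The problem therefore reduces to (i) showing $M^+ v_0(0) \leq -c_0 < 0$ with $c_0$ uniform in $\sigma \in (\sigma_0, 2)$, and (ii) transferring this strict negativity to $M^+ \tilde u_s(0)$ for $s$ small.

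For (i), $\delta v_0(0, z)$ depends only on $z_1$, so integrating out the transverse variables (using $\int_{\R^{n-1}}(z_1^2 + |z'|^2)^{-(n+\sigma)/2} dz' = c_{n,\sigma}|z_1|^{-1-\sigma}$) reduces $M^+ v_0(0)$ to a 1D integral of the sign-split second difference $\delta w(h) := (1+h)_+^\alpha + (1-h)_+^\alpha - 2$ of $w(t) = t_+^\alpha$ at $t=1$. Concavity of $t^\alpha$ on $(0,\infty)$ forces $\delta w(h) \leq 0$ unless $|h| > A := 2^{1/\alpha}-1$, and on the positive set $(\delta w)^+ \leq (1+|h|)^\alpha$, yielding a positive-part integral of size $C A^{\alpha-\sigma} = C \cdot 2^{1-\sigma/\alpha}$, which decays doubly exponentially as $\alpha \to 0$. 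By contrast, the negative part is bounded below by $\tfrac{1}{2}$ on $1 < |h| < (3/2)^{1/\alpha}-1$, giving $\int (\delta w)^- |h|^{-1-\sigma}dh \gtrsim 1/\sigma$ uniformly in $\sigma \in (\sigma_0, 2)$. Choosing $\alpha$ small enough in terms of $\sigma_0, \lambda, \Lambda, n$ yields $M^+ v_0(0) \leq -c_0$ uniformly on any $[\sigma_0, 2-\eta]$. The endpoint $\sigma \to 2^-$ is handled by continuity of $M^+_{\LI_0(\sigma)}$ on smooth inputs and the direct eigenvalue computation $M^+_2 v_0(0) = -\tilde \lambda \alpha(1-\alpha) < 0$, obtained from $D^2 v_0(0) = \mathrm{diag}(\alpha(\alpha-1), 0, \ldots, 0)$.

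For (ii), split $M^+ \tilde u_s(0) - M^+ v_0(0)$ into contributions from $|z|<R$ and $|z|>R$. On the compact region, $\tilde u_s \to v_0$ locally uniformly away from the single hyperplane $\{z_1=-1\}$ (which has zero measure and contributes a vanishing amount) and $D^2 \tilde u_s(0) \to D^2 v_0(0)$, so the integrand converges in a controlled way as $s \to 0$. On the tail, the shared envelope $(1+|z|)^\alpha$ against the kernel $|z|^{-n-\sigma}$ gives a contribution bounded by $C R^{\alpha-\sigma_0}$, uniformly in $\sigma$. Picking $R$ large then $s$ small produces $M^+ \tilde u_s(0) \leq -c_0/2$, hence $M^+ u(x) \leq -(c_0/2)\, s^{\alpha-\sigma} < 0$. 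The main obstacle throughout is the uniformity in $\sigma$ near $\sigma = 2$, where the $(2-\sigma)$ prefactor in the operator degenerates and the 1D integral analysis loses its grip; the cleanest resolution is the two-regime split $[\sigma_0, 2-\eta]$ versus $[2-\eta, 2)$, with the second regime handled by perturbation from the purely local Pucci computation just mentioned, ensuring a single $r$ that works uniformly across the whole range of $\sigma$.
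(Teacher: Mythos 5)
Your proof is correct, and it takes a genuinely different route from the one in the paper. The paper proves Lemma \ref{l:barrier} by a soft compactness argument: rescaling around the function gives $v_\alpha=\max((u_\alpha-1)/\alpha,-M)\to\max(\log(|x|-1),-M)$ as $\alpha\to0$, and then it appeals to the stability lemma (Lemma \ref{l:stability}) to derive a contradiction from a sequence of hypothetical failures $\alpha_j\to 0$, $\sigma_j\to\tilde\sigma$, using that $M^+(D^2\log(|x|-1))\to-\infty$ near the unit sphere. Your approach is instead a direct, quantitative calculation: you rescale to the planar profile $v_0(z)=(1+z_1)_+^\alpha$, integrate out the tangential variables, and exploit the concavity of $t\mapsto t_+^\alpha$ to show the positive part of the second difference lives only beyond $|h|>2^{1/\alpha}-1$ and contributes a doubly-exponentially small amount, while the negative part is bounded below by a $\sigma_0$-dependent constant. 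You then transfer the strict sign from $v_0$ to the true rescaled barrier by a compact/tail split, and you patch the $\sigma\to 2$ degeneracy (the $(2-\sigma)$ prefactor kills the fractional estimate near $\sigma=2$) with the explicit local computation $M^+_2 v_0(0)=-\lambda\,c_n\,\alpha(1-\alpha)<0$ plus continuity in $\sigma$. Both arguments reduce to a single point by rotation and a further scaling; yours parametrizes by $s=|x|-1$ whereas the paper uses a spherical scaling centered at $e_1$. What the two approaches buy: the paper's proof is slicker and reuses machinery (Lemma \ref{l:stability}) already developed in the paper, at the cost of giving no constructive bound on $\alpha$ or $r$; yours is self-contained at the level of explicit integrals, yields quantitative control on $\alpha$ in terms of $\sigma_0,\lambda,\Lambda,n$, and makes transparent exactly where the smallness of $\alpha$ is needed. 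A couple of places are stated informally (the a.e.\ convergence across the hyperplanes $\{z_1=\pm1\}$, and uniformity in $\sigma$ of the near-origin convergence $D^2\tilde u_s(0)\to D^2v_0(0)$), but these fill in routinely with a dominated-convergence argument against the shared envelope $(1+|z|)^\alpha$ and the fact that $|\delta(\tilde u_s-v_0)(0,z)|=O(s|z|^2)$ on $B_{1/2}$, so the $(2-\sigma)$ prefactor cancels the $\int|z|^{2-n-\sigma}$ divergence uniformly.
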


We defer the proof of this lemma to the appendix. We use it to obtain the following corollary.

\begin{cor} \label{c:barrier}
For any constant $C$, there is a continuous function $\varphi$ such that
\begin{itemize}
\item $\varphi = 0$ in $B_1$.
\item $\varphi \geq 0$ in $\R^n$.
\item $\varphi \geq 1$ in $\R^n \setminus B_2$.
\item $\Mp \varphi \leq 0$ in $\R^n \setminus B_1$ for any $\sigma>\sigma_0$.
\end{itemize}
\end{cor}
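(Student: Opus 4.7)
The plan is to build $\varphi$ by truncating (capping) the radial barrier from Lemma \ref{l:barrier} at the value it attains on the outer sphere of the annulus where the lemma gives information, and then rescaling to meet the lower bound on $\R^n\setminus B_2$.

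First I would apply Lemma \ref{l:barrier} to obtain $\alpha>0$ and $r>0$ (which we may shrink so that $r<1$) such that $u(x):=((|x|-1)^+)^\alpha$ satisfies $\Mp u\leq 0$ in $B_{1+r}\setminus B_1$ for every $\sigma>\sigma_0$. Define
\[
\bar u(x):=\min\bigl(u(x),\,r^\alpha\bigr).
\]
Then $\bar u$ is continuous, vanishes on $B_1$, coincides with $u$ on $B_{1+r}\setminus B_1$ (since $u<r^\alpha$ there), and is equal to the constant $r^\alpha$ on $\R^n\setminus B_{1+r}$. Because $r<1$ we have $\R^n\setminus B_2\subset \R^n\setminus B_{1+r}$, so $\bar u\equiv r^\alpha$ on $\R^n\setminus B_2$.

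Next I verify $\Mp\bar u\leq 0$ on $\R^n\setminus B_1$ in two cases. For $x\in B_{1+r}\setminus B_1$ we have $\bar u(x)=u(x)$, and since $\bar u\leq u$ globally, the second-order increment satisfies $\delta\bar u(x,y)\leq \delta u(x,y)$ for every $y$. From the explicit form \eqref{e:Mp} and the fact that $t\mapsto \Lambda t^+ -\lambda t^-$ is nondecreasing in $t$, we get $\Mp\bar u(x)\leq \Mp u(x)\leq 0$. For $x\in\R^n\setminus B_{1+r}$ the value $\bar u(x)=r^\alpha$ is the global maximum of $\bar u$, hence $\delta\bar u(x,y)\leq 0$ for every $y$, so $(\delta\bar u)^+\equiv 0$ and
\[
\Mp\bar u(x)=-\lambda(2-\sigma)\int_{\R^n}\frac{(\delta\bar u(x,y))^-}{|y|^{n+\sigma}}\dd y\leq 0.
\]

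Finally I set $\varphi:=r^{-\alpha}\bar u$; by positive homogeneity of $\Mp$ the inequality $\Mp\varphi\leq 0$ on $\R^n\setminus B_1$ is preserved, while the four bulleted conditions of the corollary are immediate (and if one needs $\varphi\geq C$ on $\R^n\setminus B_2$ for a prescribed $C$, just multiply by $C$). The only real issue is the capping step: one must check that replacing $u$ by $\min(u,r^\alpha)$ does not destroy the sign of $\Mp$ in the thin annulus where the lemma gives $\Mp u\leq 0$, and this is exactly the monotonicity observation above, which is where the choice of the cap $r^\alpha$ (equal to $u$ on the outer sphere $\partial B_{1+r}$) is crucial to keep $\bar u=u$ there and make the comparison painless.
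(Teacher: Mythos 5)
Your proof is correct and uses essentially the same idea as the paper: truncate the barrier $u(x)=((|x|-1)^+)^\alpha$ from Lemma~\ref{l:barrier} at a constant so that in the thin annulus $B_{1+r}\setminus B_1$ the truncated function equals $u$ and is dominated by it elsewhere (making $\Mp$ go the right way by monotonicity), while outside $B_{1+r}$ it sits at its global maximum so $\Mp\leq0$ trivially; then rescale to reach value $1$ outside $B_2$. The paper writes this compactly as $\varphi=\min(1,C((|x|-1)^+)^\alpha)$ with $C$ large (equivalently $C=r^{-\alpha}$, the parametrization you chose), and your write-up simply fills in the verification that the paper leaves implicit.
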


\begin{proof}
Consider $\varphi = \min(1, C ((|x|-1)^+)^\alpha)$ for some large constant $C$ and apply Lemma \ref{l:barrier}.
\end{proof}

We will use the function $\varphi$ of Corollary \ref{c:barrier} as a barrier to prove the boundary continuity of solutions to nonlocal equations. We will do it in spherical domains. Note that $\varphi$ is a supersolution outside of a ball, so the same method could be used in domains with the exterior ball condition.

\begin{thm} \label{t:boundarycontinuity}
Let $\sigma > \sigma_0 >0$. Let $\rho$ be a modulus of continuity and $\Mp$ and $\Mm$ be the maximal operators $\Mp_{\LI_0(\sigma)}$ and $\Mm_{\LI_0(\sigma)}$. Let $u$ be a bounded function such that
\begin{align*}
\Mp u (x) &\geq -C && \text{in } B_1\\
\Mm u (x) &\leq C && \text{in } B_1\\
|u(y) - u(x)| &\leq \rho(|x-y|) && \text{for every $x \in \bdary B_1$ and $y \in \R^n \setminus B_1$.}
\end{align*}
Then there is another modulus of continuity $\tilde \rho$ so that $|u(y) - u(x)| \leq \tilde \rho(|x-y|)$ for every $x \in \overline B_1$ and $y \in \R^n$. 

The modulus of continuity $\tilde \rho$ depends only on $\rho$, $\lambda$, $\Lambda$, $\sigma_0$, $n$, $\norm{u}_{L^\infty}$ and $C$ (but not on $\sigma$).
\end{thm}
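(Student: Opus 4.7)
The argument proceeds by the classical barrier-plus-interior-regularity scheme: first establish continuity of $u$ at each boundary point $x_0 \in \bdary B_1$ from within $B_1$, then patch this boundary modulus to a modulus on $\overline{B_1}$ using the interior H\"older estimate of Theorem \ref{t:ca}, and finally extend to all of $\R^n$ by means of the hypothesis on $\rho$.

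For the boundary step, fix $x_0 \in \bdary B_1$ and, after a rotation, take $x_0 = e_1$. For $r > 0$ small, set $z_r := (1+r)e_1$, so that $B_r(z_r) \subset \R^n \setminus B_1$ is tangent to $\bdary B_1$ at $x_0$, and define $\varphi_r(x) := \varphi((x-z_r)/r)$ with $\varphi$ from Corollary \ref{c:barrier}. By scaling, $\Mp \varphi_r \leq 0$ holds on $\R^n \setminus \overline{B_r(z_r)}$ (which contains $B_1 \setminus \{x_0\}$), uniformly in $\sigma > \sigma_0$. Next choose a bounded corrector $h$ vanishing at $x_0$ and satisfying $\Mp h \leq -C$ in $B_1$ uniformly in $\sigma$; such an $h$ is built by taking $-C^* g$ for a truncated convex radial function $g$ with $\Mm g \geq 1$, a standard Pucci-type barrier whose local second-order content and nonlocal tail together produce the right sign uniformly in $\sigma \in (\sigma_0, 2)$. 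For $\eps > 0$ and $M = M(\norm{u}_{L^\infty}, C)$ large enough, consider the upper barrier
\[
  \psi^+(x) := u(x_0) + \eps + M\varphi_r(x) + h(x).
\]
Sublinearity of $\Mp$ gives $\Mp \psi^+ \leq M\Mp\varphi_r + \Mp h \leq -C$ in $B_1$. For $r$ small in terms of $\eps$ and $\rho$ the bound $\psi^+ \geq u$ holds on $\R^n \setminus B_1$: near $x_0$ the gap $\eps$ absorbs $\rho(|y - x_0|)$ and the controlled values of $h$; far from $x_0$ one has $\varphi_r(y) \geq 1$, so that $M\varphi_r$ dominates $u(y) - u(x_0) - h(y)$. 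Hence $w := u - \psi^+$ satisfies $\Mp w \geq \Mp u - \Mp\psi^+ \geq 0$ in $B_1$ with $w \leq 0$ on $\R^n \setminus B_1$, and the maximum principle for extremal operators (\cite{CS}) gives $u \leq \psi^+$ in $B_1$. A mirror construction produces a lower barrier, and tying $r$ to $|x - x_0|$ yields a boundary modulus $\rho_1$ depending only on the listed parameters.

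For the patching step, given $x, y \in \overline{B_1}$ set $s := |x-y|$ and $d := \min\{\dist(x, \bdary B_1), \dist(y, \bdary B_1)\}$. If $d \geq s^{1/2}$, then rescaling $B_d(x) \subset B_1$ to the unit ball and invoking Theorem \ref{t:ca} gives $|u(x) - u(y)| \leq C(s/d)^\alpha(\norm{u}_{L^\infty} + C) \leq C s^{\alpha/2}(\norm{u}_{L^\infty} + C)$. Otherwise $d < s^{1/2}$ and the point nearer $\bdary B_1$ (say $x$) admits a projection $x^* \in \bdary B_1$ with $|x - x^*|, |y - x^*| \leq 2 s^{1/2}$, so $|u(x) - u(y)| \leq 2\rho_1(2 s^{1/2})$. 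This produces a modulus $\tilde\rho_0$ on $\overline{B_1}$. For $x \in \overline{B_1}$ and $y \in \R^n \setminus B_1$, projecting $y$ onto its nearest point $y^* \in \bdary B_1$ and combining $|u(y) - u(y^*)| \leq \rho(|y-y^*|) \leq \rho(|x-y|)$ with $|u(x) - u(y^*)| \leq \tilde\rho_0(2|x-y|)$ yields the desired $\tilde\rho$ as a concave majorant of these partial moduli.

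The main obstacle is the construction of the corrector $h$ uniformly in $\sigma \in (\sigma_0, 2)$: it must be bounded and vanish at $x_0$ while satisfying $\Mp h \leq -C$ with constants independent of $\sigma$, which demands a careful choice of truncation so that the local second-order content of $h$ (dominant as $\sigma \to 2$) and its nonlocal tail (dominant as $\sigma \to \sigma_0$) both contribute to the correct sign. Once $h$ is in place, the matching of $\psi^+$ to the exterior modulus $\rho$ near $x_0$ versus the $L^\infty$ bound of $u$ far from $x_0$ is relatively routine.
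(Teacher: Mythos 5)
The proposal is correct and follows the same overall barrier-plus-interior-regularity strategy as the paper's Lemmas \ref{l:bcont} and \ref{l:insidecont}, but it diverges from the paper in two implementation details that are worth contrasting.

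For the boundary step, the paper's Lemma \ref{l:bcont} first subtracts a fixed multiple of the universal function $p(x) = \max(0,4-|x|^2)$ from $u$ to reduce to the case $C=0$ once and for all (a single computation valid uniformly in $\sigma\geq\sigma_0$), and then obtains the boundary modulus as an explicit infimum over scales, $\tilde\rho(r) = \inf_R \left(\rho(3R) + \norm{u}_{L^\infty}\rho_0(r/R)\right)$, using the scaled barrier $u(x)+\rho(3R)+\norm{u}_{L^\infty}\varphi(-x+(y-x)/R)$. You instead bake a corrector $h$ into the barrier pointwise at each $x_0$. This is a valid variant, but the obstacle you identify — constructing $h$ bounded, vanishing at $x_0$, with $\Mp h \leq -C$ uniformly in $\sigma\in(\sigma_0,2)$ — is actually the easy part of the argument: $h=-C^*\min(|x-x_0|^2,R^2)$ for suitable $C^*,R$ (or, equivalently, the paper's translate of $-\frac{C}{c}p$) does the job, and the paper's once-and-for-all subtraction is cleaner because $h$ does not even need to vanish at $x_0$; a fixed Lipschitz offset can simply be absorbed into the modulus. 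So you over-emphasize the difficulty of this step while leaving it unresolved in your write-up.

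For the interior patching, the paper's Lemma \ref{l:insidecont} truncates $u$ at level $u(x_0)\pm\rho(4r)$ near the nearest boundary point, rescales, controls $\Mp$ and $\Mm$ of the truncation, applies Theorem \ref{t:ca}, and establishes by a compactness/contradiction argument that $\sup_{r>4d}d^\alpha m_r/r^\alpha\to 0$. You use the simpler $d\gtrless s^{1/2}$ dichotomy, applying Theorem \ref{t:ca} directly to $u$ rescaled on $B_d(x)$ when $d\geq s^{1/2}$, and falling back to the boundary modulus when $d<s^{1/2}$. This is a genuinely shorter argument and it is correct: both branches produce quantities that tend to $0$ with $s$, which is all the theorem asks. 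The tradeoff is that your modulus on the interior branch is of the crude form $C(\norm{u}_{L^\infty}+C)s^{\alpha/2}$ rather than being tied to the boundary modulus $\rho$ as in the paper, so it is quantitatively weaker; but since the statement only asserts the existence of some $\tilde\rho$, this is acceptable. The final extension to $y\in\R^n\setminus B_1$ by projection to $\bdary B_1$ is the same as what the paper does implicitly.

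One small point of care in your interior branch: you need $s \leq d/2$ in order to place $(y-x)/d$ in $B_{1/2}$ for the rescaled $C^\alpha$ estimate; with $d\geq s^{1/2}$ this holds only for $s\leq 1/4$, which is fine for defining a modulus but should be said. Otherwise the argument is sound.
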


The modulus of continuity $\tilde \rho$ in Theorem \ref{t:boundarycontinuity} is in some sense the worst between the internal regularity and the boundary regularity.

Before proving Theorem \ref{t:boundarycontinuity}, we mention a corollary.

\begin{cor} \label{c:boundarycontinuity}
Let $\sigma > \sigma_0 >0$. Let $\rho$ be a modulus of continuity. Let $u$ be a function such that
\begin{align*}
\norm{u}_{L^1(R^n,1/(1+|y|^{n+\sigma_0}))} &\leq C \\
\Mp u (x) &\geq -C && \text{in } B_1\\
\Mm u (x) &\leq C && \text{in } B_1\\
|u(y) - u(x)| &\leq \rho(|x-y|) && \text{ for every $x \in \bdary B_1$ and $y \in \R^n \setminus B_1$.}
\end{align*}
Then there is another modulus of continuity $\tilde \rho$ so that $|u(y) - u(x)| \leq \tilde \rho(|x-y|)$ for every $x \in \overline B_1$ and $y \in \R^n$. 

The modulus of continuity $\tilde \rho$ depends only on $\rho$, $\lambda$, $\Lambda$, $\sigma_0$, $n$, and $C$.
\end{cor}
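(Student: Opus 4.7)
The plan is to reduce Corollary~\ref{c:boundarycontinuity} to Theorem~\ref{t:boundarycontinuity} by truncating $u$ outside a neighborhood of $\overline B_1$, so that the resulting auxiliary function is globally bounded and only mildly changes the right-hand sides of the Pucci inequalities; the truncation error is absorbed by $\norm{u}_{L^1(\R^n,\omega)}$ thanks to Assumption~\ref{a:kernelK}.

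First I establish that $u$ is bounded on a neighborhood of $\overline B_1$. Applied with $y \in \bdary B_1 \subset \R^n \setminus B_1$, the boundary-modulus hypothesis shows that $u|_{\bdary B_1}$ is $\rho$-continuous, hence bounded on the compact set $\bdary B_1$ by some $M_1 = M_1(\rho)$; projecting an arbitrary $y \in \R^n \setminus B_1$ onto $\bdary B_1$ then gives $|u(y)| \leq M_1 + \rho(|y|-1)$, so $u$ is bounded on $B_3 \setminus B_1$. To bound $u$ inside $B_1$, I invoke a nonlocal maximum principle / ABP estimate from \cite{CS} for the Pucci inequalities $\Mp u \geq -C$, $\Mm u \leq C$: combining the $L^\infty$ bound on $\bdary B_1$ with the $L^1(\R^n,\omega)$ control of $u$ at infinity yields $\norm{u}_{L^\infty(B_1)} \leq M$ for some $M$ depending only on $\rho,\lambda,\Lambda,\sigma_0,n,C$. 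This interior $L^\infty$ bound is the main technical obstacle; the rest is bookkeeping.

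Now let $\eta \in C_c^\infty(B_3)$ satisfy $\eta \equiv 1$ on $B_2$ and $0 \leq \eta \leq 1$, and set $\tilde u := \eta u$. Then $\norm{\tilde u}_{L^\infty(\R^n)} \leq M$, $\tilde u \equiv u$ on $B_2$, and $\tilde u \equiv 0$ outside $B_3$. For any $L \in \LI_0(\sigma)$ with kernel $K$ and any $x \in B_1$, the integrand in $L\tilde u(x) - Lu(x)$ is supported where $x \pm y \notin B_2$, hence where $|y| \geq 1$; Assumption~\ref{a:kernelK} (with $r=1$) gives $K(y) \leq C_1 \omega(y)$ on that set, and \eqref{e:omegadisks} gives $\omega(z-x) \leq C_2 \omega(z)$ for $|x| \leq 1$. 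After a change of variables this produces
\[ |L\tilde u(x) - Lu(x)| \leq C_3 \norm{u}_{L^1(\R^n,\omega)}, \]
uniformly in $L$. Consequently $\Mp \tilde u \geq -C'$ and $\Mm \tilde u \leq C'$ in $B_1$ with an adjusted $C'$. The boundary-modulus hypothesis transfers to $\tilde u$ with a slightly inflated modulus $\tilde \rho(t) := \rho(t) + M_1 \min(1,t/2)$: on $B_3 \setminus B_1$ we have $\tilde u = u$ and the original $\rho$ works; for $y \notin B_3$ we have $\tilde u(y) = 0$, so $|\tilde u(y) - \tilde u(x)| = |u(x)| \leq M_1$, while $|x-y| \geq 2$.

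Theorem~\ref{t:boundarycontinuity} now applies to $\tilde u$, producing a modulus $\tilde \rho'$ with $|\tilde u(y)-\tilde u(x)| \leq \tilde \rho'(|x-y|)$ for every $x \in \overline B_1$ and $y \in \R^n$, depending only on the permitted data. Since $\tilde u \equiv u$ on $B_2$, this is exactly the desired inequality whenever $y \in B_2$. For $y \notin B_2$ (so $|x-y| \geq 1$), pick $z \in \bdary B_1$ to be the projection of $x$ (any boundary point if $x=0$); then $|x-z| \leq 1$ and $|y-z| \leq |y-x|+1$, and the triangle inequality
\[ |u(y)-u(x)| \leq |u(y)-u(z)| + |u(z)-u(x)| \leq \rho(|y-z|) + \tilde \rho'(|x-z|) \leq \rho(|y-x|+1) + \tilde \rho'(1) \]
packages into a valid modulus of continuity on $[0,\infty)$, completing the proof.
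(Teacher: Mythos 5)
Your strategy is essentially the one the paper uses: deduce boundedness of $u$ near $\overline B_1$ from the boundary modulus and the $L^1(\R^n,\omega)$ bound, replace $u$ by a bounded truncation that agrees with $u$ in a neighborhood of $\overline B_1$, observe that the truncation error in the Pucci inequalities is felt only at large $|y|$ and is therefore controlled by $\norm{u}_{L^1(\R^n,\omega)}$ via Assumption~\ref{a:kernelK} and \eqref{e:omegadisks}, and then apply Theorem~\ref{t:boundarycontinuity}. The paper truncates with $\min(C,\max(u,-C))$ instead of a cutoff $\eta u$, but that is a cosmetic difference; both produce a globally bounded function coinciding with $u$ where $|u|$ is already controlled. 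One small plus for your version: you state explicitly the step of bounding $u$ \emph{inside} $B_1$ (via a barrier/ABP argument), which is indeed needed so that the truncation does not change $u$ on a neighborhood of $B_1$; the paper compresses this.

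Two bookkeeping slips to flag. First, the claim that $\rho$-continuity on the compact set $\bdary B_1$ gives $|u|\le M_1(\rho)$ is wrong as stated: continuity bounds the oscillation but not the size of $u$, so $M_1$ must also use the hypothesis $\norm{u}_{L^1(\R^n,\omega)}\le C$ (this is exactly how the paper anchors the bound: $\int_{B_2}|u|<C$ together with $\rho$ forces $|u|\le C$ on $\bdary B_1$). Second, with $\eta\equiv 1$ only on $B_2$ and $\eta\in C_c^\infty(B_3)$, you have $\tilde u=u$ on $B_2$, not on $B_3\setminus B_1$, so the phrase ``on $B_3\setminus B_1$ we have $\tilde u=u$'' should read $B_2\setminus B_1$; the modulus transfer for $y\in B_3\setminus B_2$ still goes through because there $|x-y|\ge 1$ and $|\tilde u(y)-u(y)|\le|u(y)|$ is already controlled, so this only costs a crude constant. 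Neither slip affects the architecture of the argument.
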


\begin{proof}[Proof of corollary]
Since $|u(y)-u(x)|\leq \rho(|x-y|)$ if $x \in \bdary B_1$ and $\int_{B_2} |u| \dx < C$, then $|u|\leq C$ on $\bdary B_1$. Thus, from the modulus of continuity, $u \leq C$ in $B_2$.

We apply Theorem \ref{t:boundarycontinuity} to $\tilde u(x) = \min(C,\max(u,C))$. Note that $\Mp \tilde u \leq \Mp u + \Mp (\tilde u - u)$ and $\Mp (\tilde u - u)$ is controlled by $\norm{u}_{L^1(R^n,1/(1+|y|^{n+\sigma_0}))}$.
\end{proof}

In order to prove Theorem \ref{t:boundarycontinuity}, we will need a couple of preparatory lemmas that mirror the second order approach. Lemma \ref{l:bcont} reconstructs the modulus of continuity on the boundary, and Lemma \ref{l:insidecont} in the interior.

\begin{lemma} \label{l:bcont}
Assume $\sigma \geq \sigma_0 > 0$. Let $\rho$ be a modulus of continuity. Let $u$ be a bounded function such that
\begin{align*}
\Mp u (x) &\geq -C \\
u(y) - u(x) &\leq \rho(|x-y|) && \text{ for every $x \in \bdary B_1$ and $y \in \R^n \setminus B_1$.}
\end{align*}
Then there is another modulus of continuity $\tilde \rho$ so that $u(y) - u(x) \leq \tilde \rho(|x-y|)$ for every $x \in \bdary B_1$ and $y \in \R^n$. 

The modulus of continuity $\tilde \rho$ depends only on $\rho$, $\lambda$, $\Lambda$, $\sigma_0$, $n$, the constant $C$ above, and $\norm{u}_{L^\infty}$.
\end{lemma}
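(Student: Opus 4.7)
The proof is a barrier/comparison argument at each boundary point. Fix $x_0 \in \partial B_1$; for $y \in \R^n \setminus B_1$ the bound $u(y) - u(x_0) \leq \rho(|y-x_0|)$ is already given, so only the case $y \in \overline{B_1}$ requires work. The plan is: for each $\epsilon > 0$ construct an upper barrier $w$ on $\R^n$ with $w(x_0) = u(x_0) + \epsilon$, $w \geq u$ outside a ``cap'' $\Omega_r := B_r(x_0) \cap B_1$ (with $r$ tied to $\epsilon$), and $\Mp w \leq -C$ in $\Omega_r$. Then sublinearity of $\Mp$ gives $\Mp(u-w) \geq \Mp u - \Mp w \geq 0$ in $\Omega_r$, and the nonlocal maximum principle combined with $u - w \leq 0$ on $\R^n \setminus \Omega_r$ yields $u \leq w$ in $\Omega_r$; reading the decay of $w(y) - u(x_0)$ as $y \to x_0$ then produces the modulus $\tilde\rho$.

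For the barrier, translate the function $\varphi$ from Corollary \ref{c:barrier} so that its vanishing ball becomes the outward-tangent ball at $x_0$, namely the ball of radius $1$ centered at $2x_0$, which we denote $B_1(2x_0)$: set $\psi(y) := \varphi(y - 2x_0)$. Since $\overline{B_1} \cap \overline{B_1(2x_0)} = \{x_0\}$, we have $\psi(x_0) = 0$, $0 \leq \psi \leq 1$, $\psi \equiv 1$ on $\R^n \setminus B_2(2x_0)$, and $\Mp \psi \leq 0$ in $B_1$ for every $\sigma \geq \sigma_0$. The elementary identity $|y - 2x_0|^2 = |y - x_0|^2 + 1 + 2(1 - y\cdot x_0)$ gives $|y - 2x_0| - 1 \geq c|y - x_0|^2$ for $y \in \overline{B_1}$, so $\psi(y) \geq c'|y-x_0|^{2\alpha}$ locally. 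Given $\epsilon > 0$, pick $r>0$ with $\rho(r) \leq \epsilon$ and put $K := 2\norm{u}_{L^\infty}/(c' r^{2\alpha})$, $w(y) := u(x_0) + \epsilon + K\psi(y)$. The domination $w \geq u$ on $\R^n \setminus \Omega_r$ splits into three pieces: on $B_r(x_0) \setminus B_1$ it follows from $\rho(r) \leq \epsilon$; on $B_1 \setminus B_r(x_0)$ from $K\psi \geq Kc' r^{2\alpha} = 2\norm{u}_{L^\infty}$; and on $\R^n \setminus (B_1 \cup B_r(x_0))$ from the same bound together with $\psi \geq 1$ outside $B_2(2x_0)$.

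Since $K \Mp \psi \leq 0$ yields only $\Mp w \leq 0$, not $\Mp w \leq -C$, I would first absorb $C$ by subtracting from $u$ a bounded continuous function $v$ with $\Mp v \leq -C$ in $B_1$ and $v = 0$ on $\R^n \setminus B_1$ (existence via Perron's method using suitably scaled copies of $\varphi$); then $\tilde u := u - v$ satisfies $\Mp \tilde u \geq 0$ in $B_1$ by sublinearity and has the same exterior modulus $\rho$, while the continuous correction $v$ (with $v(x_0)=0$) contributes only a universal modulus at $x_0$ which is folded into the final $\tilde\rho$. The main obstacle is exactly the geometric fact that $\psi$ vanishes on the whole outward-tangent ball rather than only at $x_0$, so no single multiple of $\psi$ can dominate $u$ throughout $\R^n \setminus B_1$; localizing the comparison to the cap $\Omega_r$ with $r$ coupled to $\epsilon$ through $\rho$ is precisely what resolves this. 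Balancing the resulting pointwise estimate $u(y) - u(x_0) \leq \epsilon + K c_0 |y - x_0|^{2\alpha}$ by taking, say, $r = |y - x_0|^{1/2}$ produces a modulus $\tilde\rho(s)$ of the form $\rho(s^{1/2}) + s^\alpha$ times a universal constant, with the dependencies claimed in the statement.
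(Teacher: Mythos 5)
There is a genuine gap in the barrier domination step. After choosing $r$ with $\rho(r)\le\epsilon$, you need $u\le w$ on \emph{all} of $\R^n\setminus\Omega_r$ before invoking the nonlocal comparison principle; this is a global requirement, not a boundary one, precisely because the operator is nonlocal. Your three-piece verification misses the portion of the outward-tangent ball $B_1(2x_0)\setminus\bigl(B_1\cup B_r(x_0)\bigr)$. On that set $\psi\equiv 0$, so $w(y)=u(x_0)+\epsilon$, while the only available bound on $u$ there is the exterior modulus $u(y)\le u(x_0)+\rho(|y-x_0|)$. Taking $y=(1+t)x_0$ with $r<t<2$ shows such $y$ lie in $B_1(2x_0)$, outside $B_1$ and outside $B_r(x_0)$, with $|y-x_0|=t$ of order one; then $u(y)-u(x_0)$ can be as large as $\rho(t)\approx\rho(1)$, which is \emph{not} $\le\epsilon$ once $\epsilon$ is small. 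So $u>w$ can occur on $\R^n\setminus\Omega_r$, and the comparison step fails. Localizing the comparison to the cap $\Omega_r$ does not repair this: shrinking the domain where the equation is used does not shrink the set on which you must dominate $u$ from outside.

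The obstacle you correctly flagged --- that $\psi$ vanishes on the whole tangent ball, not just at $x_0$ --- must be resolved geometrically, not by localization. The paper's device is to rescale the tangent ball together with the additive offset: it uses $B(y)=u(x_0)+\rho(3R)+\norm{u}_{L^\infty}\,\varphi\bigl(-x_0+\frac{y-x_0}{R}\bigr)$, for which the vanishing set of the barrier term is $B_R((1+R)x_0)\subset B_{3R}(x_0)$, and the offset $\rho(3R)$ then dominates $u-u(x_0)$ exactly on the exterior part of $B_{3R}(x_0)$ where the barrier term can vanish; outside $B_{3R}(x_0)$ the term $\norm{u}_{L^\infty}\varphi\ge\norm{u}_{L^\infty}$ takes over. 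The comparison is then done in all of $B_1$ (no cap), and the final modulus is obtained by taking an infimum over the free parameter $R>0$, namely $\tilde\rho(s)=\inf_{R}\bigl(\rho(3R)+\norm{u}_{L^\infty}\,\rho_0(s/R)\bigr)$ where $\rho_0$ is the modulus of $\varphi$. If you replace your fixed translate $\psi(y)=\varphi(y-2x_0)$ by the family $\varphi\bigl(\frac{y-x_0}{R}-x_0\bigr)$ and let $R$ play the role of your $r$, your argument becomes essentially the paper's. Two smaller remarks: to absorb the constant $C$ a much simpler choice than a Perron construction is an explicit function such as $p(x)=\max(0,4-|x|^2)$; and your claimed pointwise bound $u(y)-u(x_0)\le\epsilon+Kc_0|y-x_0|^{2\alpha}$ uses a \emph{lower} bound for $\psi$ where an \emph{upper} bound is needed (the correct upper bound near $x_0$ is of order $|y-x_0|^{\alpha}$, since $|y-2x_0|-1\le|y-x_0|$), though this is minor next to the domination gap.
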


\begin{proof}
We first observe that since $\sigma \geq \sigma_0$ the function $p(x) = \max(0,4-|x|^2)$ satisfies $M^+ p \geq -c$ in $B_1$ for some $c>0$. So we can consider $u-\frac{C}{c} p$ and reduce the problem to the case $C=0$ (the values of $\norm{u}_{L^\infty}$ and $\rho$ are affected depending on $C$).

Let $\rho_0$ be the modulus of continuity of the function $\varphi$ of Corollary \ref{c:barrier}. Let $\tilde \rho$ be
\[ \tilde \rho(r) = \inf_{R} \left( \rho(3R) + \norm{u}_{L^\infty} \rho_0\left(\frac{r}{R}\right) \right) \] 

Let us first show that $\tilde \rho$ is indeed a modulus of continuity. The function $\tilde \rho$ is clearly monotone increasing because $\rho_0$ is. We still need to show that for every $\eps>0$, there is an $r>0$ such that $\tilde \rho(r) < \eps$. We first choose $R$ such that $\rho(3R) < \eps/2$, and then choose $r$ such that $\norm{u}_{L^\infty} \rho_0\left(\frac{r}{R}\right) < \eps /2$.

Now, we show that $\tilde \rho(|y-x|) \geq u(y) - u(x)$ for any $y \in \R^n$ and $x \in \bdary B_1$. For any $R>0$, Consider the barrier function
\[ B(y) = u(x) + \rho(3R) + \norm{u}_{L^\infty} \varphi\left(-x + \frac{y-x}{R}\right) . \]

We see that $B(x) \geq u(x) + \rho(3R) \geq u$ in $B_{3R}(x) \cap (\R^n \setminus B_1)$. Also $B(x) \geq \norm{u}_{L^\infty}$ in $\R^n \setminus B_{3R}(x)$. Moreover $M^+ B \leq 0$, whereas $M^+ u \geq 0$ in $B_1$, so $u \leq B \leq u(x) + \tilde \rho(|y-x|)$ everywhere. Taking infimum from all possible choices of $R$ we obtain $\tilde \rho(|y-x|) \geq u(y) - u(x)$.
\end{proof}

\begin{lemma} \label{l:insidecont}
Assume $\sigma > \sigma_0 >0$. Let $\rho$ be a modulus of continuity. Let $u$ be a bounded function such that
\begin{align}
\Mp u (x) &\geq -C && \text{ in } B_1 \label{e:formp}\\
\Mm u (x) &\leq C && \text{ in } B_1 \\
|u(y) - u(x)| &\leq \rho(|x-y|) && \text{ for every $x \in \bdary B_1$ and $y \in \R^n$.}
\end{align}
Then there is another modulus of continuity $\tilde \rho$ so that $|u(y) - u(x)| \leq \tilde \rho(|x-y|)$ for every $x \in \overline B_1$ and $y \in \R^n$. 

The modulus of continuity $\tilde \rho$ depends only on $\rho$, $\lambda$, $\Lambda$, the constant $C$ above, $\sigma_0$, $n$, and $\norm{u}_{L^\infty}$.
\end{lemma}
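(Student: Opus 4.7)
The plan is to combine the boundary modulus $\rho$ with the interior H\"older estimate of Theorem \ref{t:ca} through an interpolation argument analogous to the one used for second order local equations.

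First, I would derive from Theorem \ref{t:ca} a scaled interior estimate. For $x \in B_1$ set $d := \dist(x, \partial B_1)$ and apply the theorem to $v(z) := u(x + (d/2) z)$. Since $\Mp_{\LI_0(\sigma)}$ and $\Mm_{\LI_0(\sigma)}$ are operators of order $\sigma$, a change of variables gives $\Mp v(z) = (d/2)^\sigma \Mp u(x + (d/2) z)$ and likewise for $\Mm$; as $d/2 \leq 1$ and $\sigma \geq \sigma_0$, the rescaled right-hand sides stay bounded by $C$. The function $v$ is bounded by $\norm{u}_{L^\infty}$ and, by integrability of $\omega = 1/(1+|y|^{n+\sigma_0})$, one has $\norm{v}_{L^1(\R^n, \omega)} \leq C \norm{u}_{L^\infty}$. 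Theorem \ref{t:ca} then yields a universal $\alpha > 0$ (depending only on $\sigma_0$, $\lambda$, $\Lambda$, $n$) together with
\[
|u(x) - u(y)| \leq C_\ast\, d^{-\alpha} |x-y|^\alpha \qquad \text{for every } y \in B_{d/4}(x),
\]
where $C_\ast$ depends only on $\norm{u}_{L^\infty}$, $C$, and universal constants.

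Next I would assemble $\tilde \rho$ by splitting on $r := |x-y|$. For $r \leq 1/16$ set $d := \dist(x, \partial B_1)$. In the \emph{interior regime} $d > r^{1/2}$ one has $4 r \leq r^{1/2} < d$, hence $y \in B_{d/4}(x) \subset B_1$ and the scaled estimate yields
\[
|u(x) - u(y)| \leq C_\ast d^{-\alpha} r^\alpha \leq C_\ast r^{\alpha/2}.
\]
In the \emph{boundary regime} $d \leq r^{1/2}$, pick $x_0 \in \partial B_1$ with $|x-x_0| = d$ and use the boundary hypothesis twice, first with the pair $(x_0, x)$ and then with $(x_0, y)$; both are allowed because the hypothesis only requires the first argument to lie on $\partial B_1$ and the second to lie anywhere in $\R^n$, which handles $y$ regardless of whether it lies inside or outside $B_1$. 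This gives
\[
|u(x) - u(y)| \leq \rho(d) + \rho(r + d) \leq 2 \rho(2 r^{1/2}).
\]
For $r \geq 1/16$ the trivial bound $|u(x)-u(y)| \leq 2\norm{u}_{L^\infty}$ suffices. Setting
\[
\tilde \rho(r) := C_\ast r^{\alpha/2} + 2\rho(2 r^{1/2}) + 2 \norm{u}_{L^\infty} \min(1, 16 r)
\]
(and, if desired, replacing it by a monotone majorant) produces a genuine modulus of continuity with exactly the listed dependencies.

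I do not anticipate any serious difficulty; the argument is the standard interpolation between interior H\"older regularity and boundary modulus, and the exponent $1/2$ in the split $d \lessgtr r^{1/2}$ is chosen precisely so that $d^{-\alpha} r^\alpha \leq r^{\alpha/2}$ in the interior regime. The only thing requiring a moment of care is the uniformity in $\sigma$ of the rescaling step, namely that $(d/2)^\sigma \leq 1$ and $\norm{v}_{L^1(\R^n, \omega)} \leq C \norm{u}_{L^\infty}$ hold uniformly in $\sigma \in [\sigma_0, 2)$, so that the $\alpha$ and $C_\ast$ produced by Theorem \ref{t:ca} do not depend on $\sigma$. Both are immediate for a bounded $u$ and the weight $\omega = 1/(1+|y|^{n+\sigma_0})$, which is why $\tilde \rho$ ends up depending on $\sigma_0$ but not on $\sigma$.
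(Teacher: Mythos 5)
Your proposal is correct, but it reaches $\tilde\rho$ by a genuinely different route than the paper, and it is worth noting what each buys. The paper's proof, in the interior case $|x-y|<\dist(x,\partial B_1)/4$, first truncates $u$ at $u(x_0)\pm\rho(4r)$ before scaling $B_r(x)$ to $B_1$. The truncation is what makes the far field contribute only a vanishing term $I_r$ to the extremal operators, so that the interior H\"older estimate can be applied with a constant $m_r=Cr^\sigma+I_r+\rho(4r)$ that goes to zero with $r$; the modulus then comes from showing $\sup_{r>4d} d^\alpha m_r/r^\alpha\to 0$ by a small contradiction argument. Your proof skips the truncation entirely: you apply Theorem \ref{t:ca} to the crude rescaling $v(z)=u(x+(d/2)z)$ and obtain a fixed constant $C_\ast\sim\norm{u}_{L^\infty}$ rather than one that decays with scale, then compensate by splitting at $d\lessgtr r^{1/2}$ instead of the paper's linear threshold $|x-y|\lessgtr d/4$. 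Both arguments yield a genuine modulus with the stated dependencies, and your scaling computations ($\Mp v=(d/2)^\sigma\Mp u$, $\norm{v}_{L^1(\R^n,\omega)}\leq C\norm{u}_{L^\infty}$, $4r\leq r^{1/2}<d$ for $r\leq 1/16$) are all correct. What your route buys is simplicity: no need to estimate the nonlocal tail term $I_r$ coming from the cut-off, and the whole proof is a three-line interpolation. What it gives up is sharpness: your $\tilde\rho(r)$ scales like $\rho(2\sqrt{r})+r^{\alpha/2}$, whereas the paper's construction, because $m_r\to 0$ as $r\to 0$, yields a modulus that reflects $\rho$ and $r^\alpha$ at their natural linear scale. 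For the lemma as stated (existence of some $\tilde\rho$ with the listed dependencies) the sharper form is not needed, so your argument fully suffices.
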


\begin{proof}
Let $x \in \overline B_1$ and $y \in \R^n$. We will consider two cases. If $\dist(x,y) < \dist(x,\bdary B_1)/4$ or if $\dist(x,y) \geq \dist(x,\bdary B_1)/4$.

If $\dist(x,y) \geq \dist(x,\bdary B_1)/4$ then let $x_0$ be a point in $\bdary B_1$ where $\dist(x,x_0) = \dist(x,\bdary B_1)$, 
\begin{align*}
|u(x) - u(y)| &\leq |u(x)-u(x_0)| + |u(x_0) - u(y)| \\
&\leq \rho(|x-x_0|) + \rho(|x_0-y|) \\ &\leq \rho(|x-y|/4) + \rho(5|x-y|/4)
\end{align*}

Let us consider now the case $\dist(x,y) < \dist(x,\bdary B_1)/4$. Let $r = \dist(x,\bdary B_1)/2$. Let us truncate the function $u$ at $u(x_0) \pm \rho(4r)$ by defining $\overline u := \min( u(x_0)+\rho(4r), \max(u,u(x_0)-\rho(4r)))$. So we have
\[ |u(z) - \overline u(z)| \leq \min( (\rho(r+|z-x|) - \rho(4r))^+ , M) \]
where $M = 2\norm{u}_{L^\infty}$.

We now scale the functions so that $B_r(x)$ becomes $B_1$. We write
\begin{align*}
\overline v(z) &= \overline u(x+rz) \\
v(z) &= u(x+rz)
\end{align*}
and we have
\[ |v(z) - \overline v(z)| \leq \min( (\rho(r+r|z|) - \rho(4r))^+ , M) . \]

We want to estimate $M^+ \overline v$ in $B_1$. We know that $M^+ v \geq -C r^\sigma$ in $B_1$ by rescaling \eqref{e:formp}. Also $M^+ \overline v \geq M^+ v - M^+ (v - \overline v)$. We now estimate $M^+(v - \overline v)$ in $B_1$ by above. We have
\begin{align*}
M^+(v(z) - \overline v(z)) &\leq (2-\sigma) \Lambda \int_{\R^n \setminus B_1} \frac{2\min( (\rho(r+r|y|) - \rho(4r))^+ , M)}{|y|^{n+\sigma}} \dd y \\
&=: I_r
\end{align*}
We observe that $I_r \to 0$ as $r \to 0$ uniformly in $\sigma$ as long as $\sigma \geq \sigma_0$. We thus have 
$-Cr^\sigma - I_r \leq M^+ \overline v$ in $B_1$, and $Cr^\sigma + I_r \to 0$ as $r \to 0$.

We can repeat the same computation to obtain $M^- \overline v \leq Cr^\sigma + I_r$ in $B_1$. Applying interior $C^\alpha$ estimates \cite{CS}, we obtain
\[
\overline v(z) - \overline v(0) \leq C (Cr^\sigma + I_r + \rho(4r)) |z|^\alpha = C m_r |z|^\alpha
\]
where $m_r := Cr^\sigma + I_r + \rho(4r)$ goes to zero as $r \to 0$ uniformly for $\sigma \geq \sigma_0$.

Now we scale back with $y=x+rz$ to apply the above estimate to $u$. We have
\[ |u(y) - u(x)| \leq C m_r \left( \frac{|x-y|}{r} \right)^\alpha \] 
so we have proven that $|u(y) - u(x)| \leq \tilde \rho(|x-y|)$ as long as we can chose $\tilde \rho(d) \geq C \sup_{r>4d}  \frac{d^\alpha m_r}{r^\alpha}$. There will be such modulus of continuity as long as we can prove that 
\begin{equation} \label{e:mccool}
\sup_{r>4d} \frac{d^\alpha m_r}{r^\alpha} \to 0 \text{  as  } d \to 0 
\end{equation}

We will prove this by contradiction to finish the proof of this lemma. Assume \eqref{e:mccool} was not true. Then there would be an $\eps>0$ and two sequences $r_i$ and $d_i$ with $r_i > 4 d_i$ and $d_i \to 0$ such that $d_i^\alpha m_{r_i} / r_i^\alpha > \eps$. Since $m_r$ is bounded above, $d_i^\alpha / r_i^\alpha$ cannot go to zero. But that would imply that $r_i \to 0$ and $d_i^\alpha m_{r_i} / r_i^\alpha < m_{r_i} / 4^\alpha \to 0$, which gives a contradiction.
\end{proof}

\begin{remark}
If we only considered H\"older modulus of continuity, the proof would not be much shorter but it would be more direct. It is nevertheless interesting that it can be made to work for arbitrary modulus of continuity.
\end{remark}

\begin{proof}[Proof of Theorem \ref{t:boundarycontinuity}]
We apply Lemma \ref{l:bcont} to both $u$ and $-u$ to obtain a modulus of continuity that applies from any point on the boundary $\bdary B_1$ to any point in space $\R^n$. Then we use Lemma \ref{l:insidecont} to finish the proof. 
\end{proof}

\begin{remark}
The conditions $\Mp u \geq -C$ and $\Mm u \leq C$ are implied by $u$ being a solution of a linear equation like
\[ \int_{\R^n} \si u(x,y) (2-\sigma) \frac {a(x,y)}{|y|^{n+\sigma}} \dd y = f(x) \]
no continuity whatsoever is required for the coefficient $a(x,y)$, just that $\lambda \leq a(x,y) \leq \Lambda$. The function $f$ is only required to be bounded. The modulus of continuity is independent of $\sigma$ as long as $\sigma > \sigma_0 >0$, but the result was proved for constant order $\sigma$. We could obtain a corresponding result for a solution $u$ of an equation with variable order $\sigma(x)$ if instead of Theorem \ref{t:ca}, we used the estimates from \cite{S1}. The disadvantage is that those estimates would blow up as $\sigma \to 2$.
\end{remark}

\section{Approximation results.} \label{s:approximationresults}
The purpose of this section is to show that if two equations are very close to each other, then so are their solutions. The proof is by compactness: if a sequence of solutions and corresponding equations converges, the limiting function is the unique viscosity solution of the limiting equation. We stress that, thanks to the concept of viscosity solutions, it suffices that the convergence of the functionals be weak (i.e. against smooth test functions).

We obtain our desired result in great generality in Lemmas \ref{l:stability} and \ref{l:approximation}. To prove those lemmas we need some previous technical results that we develop in this section.

In the following lemma, we notice that the test functions used in the definition of viscosity solutions can be reduced to only the polynomials of degree two.

\begin{lemma} \label{l:onlypolynomials}
In Definition \ref{d:viscositysolutions} it is enough to consider test functions $\varphi$ which are quadratic polynomials and neighborhoods $N$ which are balls centered at the touching point.
\end{lemma}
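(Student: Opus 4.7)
The plan is to reduce the general viscosity test to the quadratic-polynomial-on-ball test by Taylor-expanding $\varphi$ at the touching point $x_0$ and absorbing the error through uniform ellipticity. I will write out the subsolution case; the supersolution case is symmetric.

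Let $\varphi$ be a $C^2$ test function on a neighborhood $N$ of $x_0$ with $\varphi(x_0)=u(x_0)$ and $\varphi>u$ on $N\setminus\{x_0\}$, and let $v$ be the corresponding glued test function. Denote by $P$ the second-order Taylor polynomial of $\varphi$ at $x_0$ and set $P_\eps(y)=P(y)+\eps|y-x_0|^2$ for a small parameter $\eps>0$. Since $\varphi-P=o(|y-x_0|^2)$, for every $\eps>0$ there is $r=r(\eps,\varphi)>0$ with $B_r(x_0)\subset N$ and $0\le P_\eps-\varphi\le 2\eps|y-x_0|^2$ on $B_r(x_0)$. Combined with $\varphi\ge u$ on $N$ and $\varphi(x_0)=u(x_0)$, this makes $P_\eps$ a quadratic polynomial that touches $u$ strictly from above at $x_0$ over the ball $B_r(x_0)$. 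Gluing $w_\eps=P_\eps$ on $B_r$ and $w_\eps=u$ outside, the restricted hypothesis (quadratic polynomials on balls centered at the touching point) yields $\D(w_\eps,x_0)\ge f(x_0)$.

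Next I would use uniform ellipticity \eqref{e:generaluniformellipticity} to write
\[ \D(v,x_0)\ge \D(w_\eps,x_0)-\MLp(w_\eps-v)(x_0)\ge f(x_0)-\MLp(w_\eps-v)(x_0), \]
so the task reduces to showing $\MLp(w_\eps-v)(x_0)\to 0$ as $\eps\to 0$. I decompose $w_\eps-v=h_1+h_2$ with $h_1=(P_\eps-\varphi)\mathbf{1}_{B_r}\ge 0$ and $h_2=(u-\varphi)\mathbf{1}_{N\setminus B_r}\le 0$; both vanish at $x_0$. Since $h_2\le 0$ everywhere, every $L\in\LI$ gives $Lh_2(x_0)=\int[h_2(x_0+y)+h_2(x_0-y)]K(y)\dd y\le 0$, hence $\MLp h_2(x_0)\le 0$. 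For $h_1$ the pointwise bound $|h_1|\le 2\eps|y-x_0|^2\mathbf{1}_{B_r}$ yields $|Lh_1(x_0)|\le 4\eps\int_{B_r}|y|^2K(y)\dd y$, and a standard quadratic bump test function $q(y)=|y-x_0|^2\eta(y-x_0)$ plugged into the norm bound of Assumption \ref{a:boundedness} (with Assumption \ref{a:kernelK} handling the tail) bounds $\int_{B_1}|y|^2K(y)\dd y$ uniformly over $L\in\LI$. Subadditivity of $\MLp$ then gives $\MLp(w_\eps-v)(x_0)\le C\eps$, and sending $\eps\to 0$ yields $\D(v,x_0)\ge f(x_0)$.

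The main obstacle is that $w_\eps-v$ is not small in any global sense: on the annulus $N\setminus B_r$ one has $w_\eps-v=u-\varphi$, which does not tend to zero with $\eps$. What rescues the estimate is the sign of this outer piece, $h_2\le 0$: it contributes nonpositively to every $Lh_2(x_0)$ and is therefore invisible in the upper bound on $\MLp(w_\eps-v)(x_0)$. Only the inner piece $h_1$, which genuinely scales like $\eps$, matters, and the uniform control of second moments of the kernels coming from Assumption \ref{a:boundedness} turns its local quadratic size into $O(\eps)$ decay of the operator error. A direct appeal to the norm of Definition \ref{d:norm} applied to $w_\eps-v$ would only give an $O(1)$ bound, so the sign decomposition is essential.
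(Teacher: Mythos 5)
Your proof is correct and takes essentially the same route as the paper: replace $\varphi$ near the touching point by its second-order Taylor polynomial plus $\eps|y-x_0|^2$, invoke the restricted hypothesis, then control $\D(v,x_0)-\D(w_\eps,x_0)$ by uniform ellipticity, exploiting that the outer part of $w_\eps - v$ is nonpositive (the paper phrases this via $\MLm w \geq -\MLp(w^-)$ rather than your explicit $h_1 + h_2$ split, but it is the same observation) and that the inner part is $O(\eps|y-x_0|^2)$, whose contribution is $O(\eps)$ by the uniform bound $\norm{\MLp}\leq C$ from Assumption \ref{a:boundedness}.
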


\begin{proof}
Assume Definition \ref{d:viscositysolutions} holds every time $\varphi$ is a quadratic polynomial and $N$ is a ball centered at the touching point. We will show it also holds for any $C^2$ function $\varphi$. 

Let $\varphi$ be any $C^2$ test function from above in a neighborhood $N$ (similar reasoning would apply for test functions from below). Without loss of generality we assume $\varphi$ touches $u$ at the origin. Let $p(x)$ be the following quadratic polynomial.
\[ p(x) = \langle (D^2 \varphi(0)+\eps I) x, x \rangle + x \cdot D \varphi(0) + \varphi(0) \]

So, $p \geq \varphi \geq u$ in a neighborhood of the origin $B_r$. Let 
\begin{align*}
v_\varphi(x) &= \begin{cases}
               \varphi(x) & \text{if } x \in N \\
               u(x) & \text{if } x \notin N
               \end{cases} \\
v_p(x) &= \begin{cases}
               p(x) & \text{if } x \in B_r \\
               u(x) & \text{if } x \notin B_r.
               \end{cases}
\end{align*}
We choose $r$ small so that $v_p(x) \leq v_\varphi(x) + 2 \eps |x|^2 \chi_{B_r}$.

By assumption, $\D(v_p,0) \geq f(0)$. On the other hand
\begin{align*}
\D(v_\varphi,0) - \D(v_p,0) &\geq \MLm(v_\varphi-v_p , 0) \\
&\geq -\MLp((v_\varphi-v_p)^- , 0) \\
&\geq -C \MLp( 2\eps |x|^2 \chi_{B_r} , 0) \geq -C \eps
\end{align*}
where in the last inequality we used that $\norm{\MLp} \leq C$.

Se we have $\D(v_\varphi,0) \geq \D(v_p,0) - C \eps \geq f(0) - C \eps$ for arbitrary $\eps>0$. Thus  $\D(v_\varphi,0) \geq f(0)$.
\end{proof}

We want to show that if $\D_k (u_k,x) = f_k(x)$ and $\D_k \to \D$, $u_k \to u$ and $f_k \to f$ in some appropriate way, then $\D(u,x) = f(x)$. It is indeed true that the convergence $\D_k \to \D$ can be taken with respect to the norm defined in section \ref{s:preliminaries}, however we need a stronger result with respect to a weaker version of convergence that we define below.

\begin{defn} \label{d:weakconvergence}
We say that $\D_k \to \D$ weakly in $\Omega$ if for every $x_0 \in \Omega$ and for every function $v$ of the form
\[
v(x) = \begin{cases}
       p(x) & \text{if } |x-x_0| \leq \rho \\
       u(x) & \text{if } |x-x_0| \geq \rho
       \end{cases}
\] where $p$ is a polynomial of degree two and $u \in L^1(\R^n,\omega)$, we have $\D_k(v,x) \to \D(v,x)$ uniformly in $B_{\rho/2} (x_0)$.
\end{defn}

The following lemma is an improvement of Lemma 4.5 from \cite{CS}.

\begin{lemma} \label{l:stability}
Let $\D_k$ be a sequence of uniformly elliptic operators with respect to some class $\LI$. Let us assume that Assumption \ref{a:kernelK} holds. Let $u_k$ be a sequence of lower semicontinuous functions in $\Omega$ such that
\begin{itemize}
\item $\D_k (u_k,x) \leq f_k(x)$ in $\Omega$.
\item $u_k \to u$ in the $\Gamma$ sense in $\Omega$.
\item $u_k \to u$ in $L^1(\R^n,\omega)$.
\item $f_k \to f$ locally uniformly in $\Omega$.
\item $\D_k \to \D$ weakly in $\Omega$ (with respect to $\omega$)
\item $|u_k(x)| \leq C$ for every $x \in \Omega$. 
\end{itemize}
Then $\D u \leq f$ in $\Omega$.
\end{lemma}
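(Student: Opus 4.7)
The plan is to apply Lemma~\ref{l:onlypolynomials} to reduce the supersolution test for the limit operator to quadratic polynomial test functions on balls $\overline{B_\rho(x_0)} \subset \Omega$, and then exploit the $\Gamma$-convergence $u_k \to u$ to transfer a strict touching from below of $u$ by a polynomial $p$ at $x_0$ into an approximate touching of $u_k$ from below by $p + c_k$ at a nearby point $x_k$. Concretely, each minimum on $\overline{B_\rho(x_0)}$ of $u_k - p$ is realized at some $x_k$, and the two standard inequalities defining $\Gamma$-convergence of lower semicontinuous functions force $x_k \to x_0$ and $c_k := (u_k - p)(x_k) \to 0$; the strictness of the minimum of $u - p$ guarantees that $x_k$ lies in the interior of $B_\rho(x_0)$ for $k$ large.

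With these touching points in hand, I would form the glued test functions
\[ v_k := (p + c_k) \chi_{B_\rho(x_0)} + u_k \chi_{\R^n \setminus B_\rho(x_0)}, \qquad v := p \, \chi_{B_\rho(x_0)} + u \, \chi_{\R^n \setminus B_\rho(x_0)}, \]
for which the supersolution property of $u_k$ reads $\D_k(v_k, x_k) \leq f_k(x_k)$. Since $f_k(x_k) \to f(x_0)$ by local uniform convergence, it suffices to show $\D_k(v_k, x_k) \to \D(v, x_0)$, which I would establish through the three-way split
\[ \D_k(v_k, x_k) - \D(v, x_0) = [\D_k(v_k, x_k) - \D_k(v, x_k)] + [\D_k(v, x_k) - \D(v, x_k)] + [\D(v, x_k) - \D(v, x_0)] . \]
The middle term vanishes by the weak convergence hypothesis $\D_k \to \D$ (Definition~\ref{d:weakconvergence}), uniformly on $B_{\rho/2}(x_0)$, applied to the admissible function $v$. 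The last term vanishes by the continuity clause of Definition~\ref{d:operators}, since $v$ is $C^2$ inside $B_\rho(x_0)$ and lies in $L^1(\R^n, \omega)$.

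The main obstacle will be the first term. Uniform ellipticity \eqref{e:generaluniformellipticity} bounds it in absolute value by $\max(\MLp(v_k - v)(x_k), -\MLm(v_k - v)(x_k))$. Crucially, $v_k - v$ equals the constant $c_k$ on $B_\rho(x_0)$ and equals $u_k - u$ on the complement, so the second increment $\delta(v_k - v)(x_k, y) = (v_k-v)(x_k+y) + (v_k-v)(x_k-y) - 2(v_k-v)(x_k)$ is identically zero for $|y| < d_k := \dist(x_k, \partial B_\rho(x_0))$. Since $x_k \to x_0$ interior to $B_\rho(x_0)$, the distance $d_k$ stays bounded below by a positive constant. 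For $|y| \geq d_k$, Assumption~\ref{a:kernelK} lets me majorize any kernel $K$ of an operator $L \in \LI$ by $C_{d_k}\, \omega(y)$. Bounding the integrand by $4|c_k| + |u_k-u|(x_k+y) + |u_k-u|(x_k-y)$ and using \eqref{e:omegalinear} together with \eqref{e:omegadisks} to absorb the bounded shift by $x_k$ when transporting $\omega$, the resulting integral is controlled by $C(|c_k| + \norm{u_k - u}_{L^1(\R^n, \omega)})$, uniformly in $L \in \LI$; both summands vanish in the limit. The delicate point, exactly as in Lemma~4.5 of \cite{CS}, is that $v_k - v$ fails to be $C^2$ across $\partial B_\rho(x_0)$, so that only the strict interior touching makes it legitimate to estimate the maximal operators by localizing the integrand away from the interface.
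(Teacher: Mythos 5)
Your proposal is correct and follows essentially the same route as the paper: reduce to quadratic polynomials via Lemma~\ref{l:onlypolynomials}, use $\Gamma$-convergence to produce the touching points $x_k \to x_0$ and shifts $c_k \to 0$, form the glued tests $v_k$ and $v$, and control $\D_k(v_k,x_k) - \D(v,x_0)$ by splitting into the ellipticity error (localized away from the interface using Assumption~\ref{a:kernelK} together with \eqref{e:omegalinear}, \eqref{e:omegadisks}), the weak-convergence error, and the continuity error from Definition~\ref{d:operators}. The only cosmetic difference is that you split directly at the pair $(x_k,x_0)$ while the paper first establishes uniform convergence of $\D_k(v_k,\cdot)\to\D(v,\cdot)$ on $B_{r/4}(x)$ and then appeals to continuity of $\D v$, but the content of the estimate is the same.
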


\begin{proof}
Let $p$ be a polynomial of degree $2$ touching $u$ from below at a point $x$ in a neighborhood $B_r(x)$.

Since $u_k$ $\Gamma$-converges to $u$ in $\Omega$, for large $k$, we can find $x_k$ and $d_k$ such that $p+d_k$ touches $u_k$ at $x_k$. Moreover $x_k \to x$ and $d_k \to 0$ as $k \to + \infty$.

Since $\D_k(u_k,x) \leq f_k(x)$, if we let
\[ v_k = \begin{cases}
          p + d_k & \text{in } B_r(x) \\
          u_k & \text{in } \R^n \setminus B_r(x) \ ,
         \end{cases} \]
we have $\D_k (v_k ,x_k) \leq f_k(x_k)$. Clearly $v = \lim_{k \to +\infty} v_k$.

Let $z \in B_{r/4}(x)$. We have
\begin{align*}
|\D_k (v_k ,z) &- \D(v,z)| \leq \abs{\D_k (v_k,z) - \D_k (v,z)} + \abs{\D_k (v,z) - \D (v,z)}\\
&\leq \max\left( |\MLp (v_k-v)(z)|, |\MLp (v-v_k)(z)| \right) + \abs{\D_k (v,z) - \D (v,z)} \\
&\leq \sup_{L \in \LI} \abs{ L (v_k - v)(z) } + \abs{\D_k(v,z) - \D(v,z)}\\ 
&\leq \left( \int_{\R^n \setminus B_{r/2}} |\si(v_k-v)(z,y)| K(y) \dd y \right) + \abs{\D_k(v,z) - \D(v,z)}\\
&\leq C \left( \int_{\R^n \setminus B_{r/2}} 2|v_k(y)-v(y)| \sup_{z \in B_{r/4}} \omega(y+z) \dd y \right) + C |v_k(z)-v(z)| +\abs{\D_k(v,z) - \D(v,z)} \\
&\leq C \norm{v_k-v}_{L^1(\R^n,\omega)} + C |v_k(z)-v(z)|  +\abs{\D_k(v,z) - \D(v,z)}
\end{align*}
Where we used Assumption \ref{a:kernelK} and \eqref{e:omegadisks}.

The term $\abs{\D_k(v,z) - \D(v,z)}$ goes to zero uniformly for $z \in B_{r/4}(x)$ since we assumed $\D_k \to \D$ weakly. The sequence $\norm{v_k-v}_{L^1(\omega)} \to 0$ and $u_k \to u$ uniformly in $B_r(x)$, therefore the right hand side in the above inequality goes to zero uniformly for $z \in B_{r/4}$. We obtain $\D_k(v_k,x) \to \D(v,x)$ uniformly in $B_{r/4}(x)$. 

We have that $\D v$ is continuous in $B_r(x)$. We now compute
\[ |\D_k(v_k,x_k) - \D (v,x)| \leq |\D_k (v_k,x_k) - \D (v,x_k)| + |\D (v,x_k) - \D (v,x)| \to 0 \ . \]

So $\D_k (v_k,x_k)$ converges to $\D(v,x)$, as $k \to +\infty$. Since $x_k \to x$ and $f_k \to f$ locally uniformly, we also have $f_k(x_k) \to f(x)$, which finally implies $\D(v,x) \leq f(x)$.
\end{proof}

\begin{remark}
Note that the weight $\omega$ is important. Consider the following example. Let $\D_k = -(-\lap)^{1-1/k}$ and $\D = \lap$. Then $\D$ is a local operator, and we may think that we can take $\omega=0$ and overlook the $L^1(\R^n,\omega)$ convergence. However we can consider the sequence of functions
\[ u_k(x) = \chi_{B_1}(x) (|x|^2-1) - M_k \chi_{\R^n \setminus B_{2k}} \ . \]
If we choose $M_k$ large enough, then we can make $\D_k u_k \leq 0$ in $B_1$ for every $k$, but $\D f = \lap f = 2n > 0$ in $B_1$.

The point is that in this example there is no single weight $\omega$ for which at the same time $u_k \to u$ in $L^1(\R^n,\omega)$ and $\norm{\D_n - \D} \to 0$ with respect to $\omega$.
\end{remark}

\begin{lemma} \label{l:onetestsubseq}
Let $v$ be a function
\[ v(x) = \begin{cases}
               p(x) & \text{if } x \in B_r \\
               u(x) & \text{if } x \notin B_r.
          \end{cases} \]
and let $\D_k$ be a sequence of uniformly elliptic operators respect to some class $\LI$ satisfying Assumption \ref{a:kernelK}. There is a subsequence $\D_{k_j}$ such that 
$f_{k_j}(x) := \D_{k_j}(v,x)$ converges uniformly in $B_{r/2}$.
\end{lemma}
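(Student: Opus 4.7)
My plan is to apply the Arzel\`a--Ascoli theorem to the family $\{f_k\}$ of continuous functions on the compact set $\overline{B_{r/2}}$, so the task reduces to proving uniform boundedness and equicontinuity. Both should follow from uniform ellipticity of $\D_k$ with respect to $\LI$, combined with the special structure of $v$: namely, $v$ equals the quadratic polynomial $p$ on $B_r$ and agrees with $u \in L^1(\R^n,\omega)$ outside.

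For uniform boundedness, I would choose a smooth compactly supported extension $\phi$ of $p\chi_{B_r}$ so that $\phi$ qualifies as a test function in the norm $\|\D_k\|$; Assumption~\ref{a:boundedness} on $\LI$ then bounds $|\D_k(\phi, x)|$ uniformly in $k$ and $x \in B_{r/2}$. The remainder $w := v - \phi$ vanishes on $B_r$, so for $x \in B_{r/2}$ the second incremental quotient $\delta w(x,y)$ is supported in $\{|y| \geq r/2\}$. Assumption~\ref{a:kernelK} together with \eqref{e:omegadisks} then gives $|\MLp w(x)|,\, |\MLm w(x)| \leq C\|w\|_{L^1(\R^n,\omega)}$, and uniform ellipticity yields $|f_k(x)| \leq C$ uniformly in $k,x$.

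For equicontinuity, fix $x_1, x_2 \in B_{r/2}$ with $h := x_2 - x_1$ small, and set $\tau_h v(y) := v(y+h)$. The key observation is that the second difference of a quadratic polynomial is independent of the base point, so $\delta(v - \tau_h v)(x, y) = 0$ for $|y| < r/4$ and $x \in B_{r/2}$. The tail $|y| \geq r/4$ contributes at most $C(\|\tau_h v - v\|_{L^1(\R^n,\omega)} + |h|)$ by Assumption~\ref{a:kernelK} and \eqref{e:omegadisks}; this quantity tends to zero as $|h| \to 0$ by absolute continuity of $\omega$, uniformly over $L \in \LI$. Uniform ellipticity then transports this into $|\D_k(v,x) - \D_k(\tau_h v, x)| \to 0$ uniformly in $k$ and $x \in B_{r/2}$.

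The hardest step will be the final identification: passing from the function-shift estimate above to the genuine spatial-shift estimate $|\D_k(v, x+h) - \D_k(v, x)| \to 0$. For a translation invariant $\D_k$ this is immediate since $\D_k(\tau_h v, x) = \D_k(v, x+h)$, but for a general uniformly elliptic $\D_k$ these differ, and one must exploit further structure behind Definition~\ref{d:operators} (for example, freezing the coefficients in the Isaacs-type representation and combining the resulting estimate with uniform ellipticity) to close the gap. Once the spatial equicontinuity of $\{f_k\}$ is in hand, the Arzel\`a--Ascoli theorem produces the desired subsequence $\D_{k_j}(v,\cdot)$ converging uniformly on $\overline{B_{r/2}}$.
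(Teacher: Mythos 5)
Your strategy coincides with the paper's: reduce the claim to Arzel\`a--Ascoli by producing a uniform modulus of continuity for $f_k = \D_k(v,\cdot)$ on $\overline{B_{r/2}}$, and obtain that modulus from uniform ellipticity together with the observation that $v - \tau_h v$ has vanishing second increments near the base point (since $v$ coincides with a quadratic polynomial on $B_r$), so that only the tail $|z|\gtrsim r/4$ contributes, and the tail is controlled via Assumption~\ref{a:kernelK} and \eqref{e:omegadisks}. This is exactly the decomposition the paper uses.

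The gap you flag at the end is genuine, and the paper's proof quietly makes the same leap: it opens with
\[ f_k(x) - f_k(y) = \D_k(v,x) - \D_k(v,y) \leq \MLp(v - \tau_{y-x}v,\, x), \]
which combines the ellipticity estimate $\D_k(v,x) - \D_k(\tau_{y-x}v,x) \leq \MLp(v - \tau_{y-x}v, x)$ with the identity $\D_k(\tau_{y-x}v, x) = \D_k(v,y)$. The latter is precisely translation invariance of $\D_k$, which is not among the lemma's stated hypotheses. Without it the conclusion can actually fail: take $\D_k(u,x) := \bigl(1 + \tfrac12 \sin(k x_1)\bigr)\, L_0 u(x)$ for a fixed $L_0 \in \LI_0$. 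Each $\D_k$ is a nonlocal operator in the sense of Definition~\ref{d:operators}, uniformly elliptic with respect to $\LI_0$ with $\lambda=1/2$, $\Lambda = 3/2$, and $\LI_0$ satisfies Assumption~\ref{a:kernelK}; yet $f_k(x) = \bigl(1+\tfrac12\sin(kx_1)\bigr) L_0 v(x)$ admits no uniformly convergent subsequence on $B_{r/2}$ whenever $L_0 v \not\equiv 0$ there. So the missing hypothesis is translation invariance of the $\D_k$ (or a uniform modulus of $x$-continuity for the family), and the resolution is to add it rather than to dig for more structure in Definition~\ref{d:operators}. Note this does not harm the paper's downstream results: Theorem~\ref{l:subseqconvergesweakly}, and hence this lemma, is invoked only in the proof of Lemma~\ref{l:approximation} for the sequence $\D_0^{(k)}$, which the remark there indicates will be translation invariant in all applications, so the extra hypothesis is in fact available where it is needed.
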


\begin{proof}
All we need to do is find a uniform modulus of continuity for $f_k$ in $B_{r/2}$ so that the Lemma follows by Arzela-Ascoli.

Recall the notation $\tau_z u(x) = u(x+z)$. Given $x,y \in B_{r/2}$ with $|x-y| < r/8$, we have
\begin{align*}
f_k(x) - f_k(y) &= \D_k(v,x) - \D_k(v,y) \\
&\leq \MLp(v - \tau_{y-x} v, x) \\
\intertext{Note that $v - \tau_{y-x} v$ is just linear function in $B_{r/4}(x)$}
&\leq \int_{\R^n \setminus B_{r/4}} \si(v - \tau_{y-x} v,x,z) K(z) \dd z \\
\intertext{where $K$ is the kernel of Assumption \ref{a:kernelK}, thus we have}
&\leq C \int_{\R^n \setminus B_{r/4}} (v(x+z) + v(x-z) - 2v(x) -  v(y+z) - v(y-z) + 2v(y)) \omega(z) \dd z \\
&\leq C ( |v(x)-v(y)| + \norm{\tau_x v - \tau_y v}_{L^1(\R^n,\omega)} ) \\
&\leq C \left( \sup_{|\bar x-\bar y| \leq |x-y|, \ \bar x,\bar y \in B_{r/2}} |v(\bar x)-v(\bar y)| + \int_{\R^n} |\tau_{(\bar x - \bar y)} v - v| (\sup_{B_r(z)} \omega) \dd z \right) \leq c(|x-y|)
\end{align*}
Where we use \eqref{e:omegadisks} and $c(\rho)$ is defined as
\[ c(\rho) := C \left( \sup_{|x-y| \leq \rho, \ x,y \in B_{r/2}} |v(x)-v(y)| + \int_{\R^n} |\tau_{(x-y)} v - v| \omega(z) \dd z \right).
\]

Clearly, $c(\rho)$ is a modulus of continuity that depends on $v$ but not on $\D_k$. So the functions $f_k$ have a uniform modulus of continuity and there is a subsequence that converges uniformly by Arzela-Ascoli.
\end{proof}

\begin{thm} \label{l:subseqconvergesweakly}
Let $\D_k$ be a sequence of operators uniformly elliptic with respect to some class $\LI$ satisfying Assumptions \ref{a:boundedness} and \ref{a:kernelK}. There is a subsequence $\D_{k_j}$ that converges weakly.
\end{thm}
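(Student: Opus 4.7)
The approach is a diagonalization built on Lemma \ref{l:onetestsubseq}. The plan is as follows. First, I would select a countable family of test functions dense among those admitted in Definition \ref{d:weakconvergence}: since the parameter space $\Omega \times (0,\infty) \times \{\text{quadratic polynomials}\} \times L^1(\R^n,\omega)$ is separable, pick countably many tuples $(x_{0,m},\rho_m,p_m,u_m)$ dense in it, and let $v_m$ denote the corresponding piecewise test function.

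Applying Lemma \ref{l:onetestsubseq} to each $v_m$ and extracting diagonally, I would obtain a single subsequence $\D_{k_j}$ along which $\D_{k_j}(v_m,\cdot)$ converges uniformly on $B_{\rho_m/2}(x_{0,m})$ for every $m$.

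The main obstacle will be promoting this convergence from the dense family to an arbitrary admissible $v$ with parameters $(x_0,\rho,p,u)$. Given $\epsilon > 0$, I would pick a $v_m$ from the dense family with $\rho_m$ slightly larger than $\rho$ (so that $B_{\rho/2}(x_0) \subset B_{\rho_m/2}(x_{0,m})$) and with $x_{0,m}$, $p_m$, $u_m$ close to $x_0$, $p$, $u$ in the appropriate senses. The key quantitative step is to bound
\[
\sup_{z \in B_{\rho/2}(x_0)} |\D_k(v,z) - \D_k(v_m,z)|
\]
uniformly in $k$ by something that vanishes as the approximation tightens. Uniform ellipticity bounds this difference pointwise by $\max\{\MLp(v - v_m)(z),\,-\MLm(v - v_m)(z)\}$. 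Since $v - v_m$ coincides with the smooth polynomial $p - p_m$ on $B_{\rho/4}(z)$ once the approximation is tight enough, I would split the integral defining any $L \in \LI$ at radius $\rho/4$: the near part is controlled by $\norm{D^2(p-p_m)}_{L^\infty}$ via Assumption \ref{a:boundedness}, while the far part is controlled by $\norm{v-v_m}_{L^1(\R^n,\omega)}$ via Assumption \ref{a:kernelK} and the weight inequality \eqref{e:omegadisks}. That $L^1(\omega)$-norm is in turn small by the closeness of $u_m$ to $u$ and of $p_m$ to $p$ on compacts, together with absolute continuity of the Lebesgue integral applied to the thin shell between $B_\rho(x_0)$ and $B_{\rho_m}(x_{0,m})$.

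With this uniform-in-$k$ continuity in hand, a standard triangle-inequality argument shows that $\{\D_{k_j}(v,\cdot)\}$ is Cauchy in $C(B_{\rho/2}(x_0))$. Defining $\D(v,z)$ as the uniform limit yields the weak-convergence limit in the sense of Definition \ref{d:weakconvergence}.
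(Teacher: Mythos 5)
Your proposal follows essentially the same path as the paper's proof: pick a countable dense family of piecewise test functions, apply Lemma \ref{l:onetestsubseq} to each and diagonalize, then promote the convergence to arbitrary admissible $v$ via a uniform-in-$k$ estimate on $\D_k(v,\cdot)-\D_k(v_m,\cdot)$, splitting the extremal operator at a fixed radius and controlling the near part through Assumption \ref{a:boundedness} and the far part through Assumption \ref{a:kernelK} together with \eqref{e:omegadisks}. (A cosmetic difference: the paper builds its dense family with balls $B_{2^{-k}}$ centered at the origin only, while you vary the center $x_0$ as well; your version is if anything a little cleaner with respect to Definition \ref{d:weakconvergence}.)

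The one step you do not address, and which the paper makes a point of including, is the closing observation that the limiting rule $\D_\infty$ --- which your argument constructs only on test functions of the special piecewise form \eqref{e:v} --- extends uniquely to a nonlocal operator (in the sense of Definition \ref{d:operators}) that is uniformly elliptic with respect to $\LI$. The paper does this by passing the inequality $\MLm(v_1-v_2,x)\le \D_k(v_1-v_2,x)\le \MLp(v_1-v_2,x)$ to the limit and then extending to arbitrary $C^2$ test functions via the approximation argument of Lemma \ref{l:onlypolynomials}. This is not merely cosmetic: Definition \ref{d:weakconvergence} presupposes a limit operator $\D$, and the way the theorem is used in Lemma \ref{l:approximation} requires the limit to be a genuine uniformly elliptic nonlocal operator so that Lemma \ref{l:stability} (and the uniqueness hypothesis for the Dirichlet problem) can be applied to it. So you should add that final paragraph to make the proof complete.
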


\begin{proof}
In order to make this proof, first we will construct a dense subset of the test functions $v$ of the form
\begin{equation} \label{e:v}
 v(x) = \begin{cases}
               p(x) & \text{if } x \in B_r \\
               u(x) & \text{if } x \notin B_r.
          \end{cases}
\end{equation}

This dense subset is constructed as follows. We know that $L^1(\R^n,\omega)$ is separable, so it has a countable dense subset $\{u_i\}$. Quadratic polynomials are a finite dimensional space which also has a countable dense subset $\{p_i\}$. Now, for every positive integer $k$, we construct the functions
\[ v_{k,i_1,i_2}(x) = \begin{cases}
               p_{i_1}(x) & \text{if } x \in B_{2^{-k}} \\
               u_{i_2}(x) & \text{if } x \notin B_{2^{-k}}.
          \end{cases} \]
So, given any function $v$ as in \eqref{e:v} and any $\eps>0$ we first choose $k$ such that $2^{-k} < r < 2^{-k+1}$, then we choose $i_1$ and $i_2$ such that $\norm{u_{i_2} - u}_{L^1(\R^n,\omega)} < \eps$, $|D^2 p_{i_1} - D^2 p| < \eps$, $|Dp_{i_1} - Dp| < \eps$ in $B_{2^{-k}}$, and $|p_{i_1} - p| < \eps$ in $B_{2^{-k}}$.

Since the set $\{v_{k,i_1,i_2}\}$ is countable, we can arrange it in a sequence $v_i$ of the form
\[
v_i(x)  = \begin{cases}
               p_{i}(x) & \text{if } x \in B_{r_i} \\
               u_{i}(x) & \text{if } x \notin B_{r_i}.
          \end{cases}
\]
such that for every $v$ as in \eqref{e:v}, there is a $v_i$ such that
\begin{equation} \label{e:vclose}
\begin{aligned}
\norm{v-v_i}_{L^1(\R^n,\omega)} &< \eps \\ 
|v-v_i| &< \eps && \text{in } B_{r/2} \\ 
|D v-D v_i| &< \eps && \text{in } B_{r/2} \\ 
|D^2 v-D^2 v_i| &< \eps && \text{in } B_{r/2}.
\end{aligned}
\end{equation}

By Lemma \ref{l:onetestsubseq}, for each $v_i$, we can take a subsequence $\D_{k_j}$ such that $\D_{k_j}(v_i)$ converges uniformly in $B_{r_i/2}$.

By a classical diagonal argument, there is a subsequence $\D_{k_j}$ such that for every $v_i$, $\D_{k_j}(v_i)$ converges uniformly in $B_{r_i/2}$. We call this limit $\D_\infty(v_i,x)$.

If $v$ is any test function, there is an $i$ such that $v$ is very close to $v_i$ in the sense of \eqref{e:vclose}. For any $x \in B_{r/2}$, we have
\begin{align*}
\D_k(v,x) - \D_k(v_i,x) &\leq \MLp(v - v_i, x) \\
&\leq \MLp((v - v_i) \chi_{B_{r/2}}, x) + \MLp((v - v_i) (1-\chi_{B_{r/2}}), x)\\
\intertext{Note that $v - v_i$ is a quadratic function in $B_{r/2}(x)$}
&\leq \eps \norm{\D_k} + \int_{\R^n \setminus B_{r/2}} \si(v - v_i,x,z) K(z) \dd z \\
\intertext{where $K$ is the kernel of Assumption \ref{a:kernelK}, thus we have}
&\leq \eps \norm{\D_k} + C \norm{v-v_i}_{L^1(\R^n,\omega)} \\
&\leq C \eps
\end{align*}

So, by choosing $v_i$, we can make $\D_k(v,x) - \D_k(v_i,x)$ as small as we wish in $B_{r/2}$, uniformly in $k$. So, for $j$ large, $|\D_{k_j}(v,x) - \D_\infty(v_i,x)| < 2\eps$ and thus $\D_{k_j}(v,x)$ is a Cauchy sequence in $L^\infty(B_{r/2})$. We define $\D_\infty(v,x)$ to be the uniform limit of this sequence in $B_{r/2}$.

Therefore we have shown that $\D_k(v,x)$ converges uniformly to $\D_\infty(v,x)$ in $B_{r/2}$. To finish the proof, all we must show is that this operator $\D_\infty$ can be extended to a  uniformly elliptic operator for all test functions $\varphi$. The key is to note that for any two test functions $v_1$ and $v_2$ of the form \eqref{e:v} we have $\MLm(v_1-v_2,x) \leq \D_k(v_1-v_2,x) \leq \MLp(v_1-v_2,x)$, and so this inequality passes to the limit and becomes $\MLm(v_1-v_2,x) \leq \D_\infty(v_1-v_2,x) \leq \MLp(v_1-v_2,x)$. Now, approximating an arbitrary test function $\varphi$ as in the proof of Lemma \ref{l:onlypolynomials} we get that there is a unique way to extend $\D_\infty$ to all test functions $\varphi$ such that $\D_\infty$ is uniformly elliptic respect to $\LI$. 
\end{proof}



\begin{lemma} \label{l:approximation}
For some $\sigma \geq \sigma_0 > 1$ and $\alpha <\sigma_0 - 1$ we consider nonlocal operators $\D_0$, $\D_1$ and $\D_2$ uniformly elliptic respect to $\LI_0(\sigma)$. Assume also that the boundary problem
\begin{align*}
\D_0 u &= 0 \text{ in } B_1 \\
u &= g \text{ in } \R^n \setminus B_1
\end{align*}
does not have more than one solution $u$ for any $g$ continuous and $|g(x)| \leq M(|x|+1)^{1+\alpha}$.

Given $M>0$, a modulus of continuity $\rho$ and $\eps>0$, there is an $\eta>0$ (small) and a $R>0$ (large) so that if $u$, $v$, $\D_0$, $\D_1$ and $\D_2$ satisfy
\begin{align*}
\D_0 (v,x) &= 0  && \text{in } B_1, \\
\D_1 (u,x) &\geq -\eta && \text{in } B_1, \\
\D_2 (u,x) &\leq \eta && \text{in } B_1, \\
u &= v && \text{in } \R^n \setminus B_1, \\
\norm{\D_1-\D_0} &\leq \eta && \text{in } B_1, \\
\norm{\D_2-\D_0} &\leq \eta && \text{in } B_1, \\
|u(x) - u(y)| &\leq \rho(|x-y|) && \text{for every } x \in B_R \setminus B_1 \text{ and } y \in \R^n \setminus B_1, \\
|u(x)| &\leq M(|x|+1)^{1+\alpha}.
\end{align*}
then $|u-v| < \eps$ in $B_1$.
\end{lemma}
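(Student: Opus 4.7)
The plan is to argue by contradiction through a compactness argument in the spirit of Caffarelli--Cabr\'e Chapter 8, adapted to the nonlocal setting. Suppose the statement fails: then there exist $\eps_0>0$ and sequences $\eta_k\downarrow 0$, $R_k\uparrow\infty$, functions $u_k,v_k$, and operators $\D_0^{(k)},\D_1^{(k)},\D_2^{(k)}$ satisfying all the listed hypotheses yet with $\sup_{B_1}|u_k-v_k|\geq\eps_0$. I will extract convergent subsequences for both the functions and the operators, identify the limits as two viscosity solutions of a common equation with identical exterior data, and invoke the uniqueness hypothesis to derive a contradiction.

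The first task is to set up the compactness. For the operators, Theorem \ref{l:subseqconvergesweakly} produces a subsequence along which $\D_0^{(k)}$ converges weakly in the sense of Definition \ref{d:weakconvergence} to a uniformly elliptic limit $\D_\infty$; since $\norm{\D_i^{(k)}-\D_0^{(k)}}\leq\eta_k\to 0$, the same limit serves for $\D_1^{(k)}$ and $\D_2^{(k)}$ after a short approximation rewriting a test function from Definition \ref{d:weakconvergence} as one admissible in Definition \ref{d:norm}, plus an error controlled by uniform ellipticity and the weight $\omega$. For the functions, the bounds $\D_1^{(k)}u_k\geq -\eta_k$ and $\D_2^{(k)}u_k\leq \eta_k$ combine with uniform ellipticity to give $\Mp u_k\geq -C$ and $\Mm u_k\leq C$ in $B_1$, and the same holds for $v_k$ since $\D_0^{(k)}v_k=0$; together with the boundary modulus $\rho$ inherited on $\partial B_1$, Corollary \ref{c:boundarycontinuity} supplies a uniform modulus of continuity for both $u_k$ and $v_k$ on $\overline{B_1}$.

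Outside $B_1$, $u_k=v_k$ is controlled by the modulus $\rho$ on the expanding annuli $B_{R_k}\setminus B_1$ and is pointwise bounded by $M(|x|+1)^{1+\alpha}$. A diagonal Arzel\`a--Ascoli argument then extracts limits $u_\infty,v_\infty$ with $u_k\to u_\infty$ and $v_k\to v_\infty$ locally uniformly in $\R^n$, and $u_\infty=v_\infty$ off $B_1$. Since $\alpha<\sigma_0-1$, the majorant $M(|x|+1)^{1+\alpha}$ is $\omega$-integrable, so dominated convergence upgrades this to convergence in $L^1(\R^n,\omega)$. Lemma \ref{l:stability} now takes over: applied to $\D_0^{(k)}v_k=0$ it yields $\D_\infty v_\infty=0$ in $B_1$, and applied to the sub- and supersolution sides of $u_k$ it yields $\D_\infty u_\infty=0$ in $B_1$. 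Both limit functions therefore solve the same equation in $B_1$ with the same continuous exterior datum $g$, with $|g(x)|\leq M(|x|+1)^{1+\alpha}$, so the uniqueness hypothesis forces $u_\infty\equiv v_\infty$ on $B_1$, which contradicts $\sup_{B_1}|u_k-v_k|\geq\eps_0$ after passing to the limit.

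The main obstacle I anticipate is verifying that the uniqueness postulated for $\D_0$ is inherited by the weak limit $\D_\infty$ produced by the compactness argument. In practice this is handled by treating $\D_0$ as fixed along the contradiction sequence (the statement is most naturally read with $\D_0$ given, so that $\D_\infty=\D_0$), or by appealing to structural properties such as translation invariance together with a comparison principle, which are stable under weak limits. A secondary technical point is the passage from the norm closeness $\norm{\D_i^{(k)}-\D_0^{(k)}}\leq\eta_k$ to weak convergence in the sense of Definition \ref{d:weakconvergence}; this is routine once one localises the quadratic test functions to the unit scale and absorbs the tail contributions using Assumption \ref{a:kernelK} and \eqref{e:omegadisks}.
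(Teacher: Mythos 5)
Your proposal is correct and follows essentially the same compactness-by-contradiction argument as the paper's proof: extract a weakly convergent subsequence of the operators via Theorem \ref{l:subseqconvergesweakly}, get equicontinuity of $u_k,v_k$ from the boundary regularity results and then $L^1(\R^n,\omega)$ convergence by dominated convergence, pass to the limit in the equations via Lemma \ref{l:stability}, and contradict uniqueness. Your observation that one must verify the uniqueness hypothesis is inherited by the weak limit is a genuine subtlety that the paper's own proof glosses over; the paper relies implicitly on the remark that in applications $\D_0$ is translation invariant (so the limit remains translation invariant and uniqueness follows from the comparison principle in \cite{CS}), which matches your suggested resolution.
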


\begin{proof}
Assume the result was false. There there would be a sequence $R_k$, $\D_0^{(k)}$, $\D_1^{(k)}$, $\D_2^{(k)}$, $\eta_k$, $u_k$, $v_k$, $\D_k$, $f_k$ such that $R_k \to \infty$, $\eta_k \to 0$, all the assumptions of the lemma are valid but nonetheless $\sup |u_k-v_k| \geq \eps$ in $B_1$.

Since $\D_0^{(k)}$ is a sequence of uniformly elliptic operators, by Lemma \ref{l:subseqconvergesweakly} we can take a subsequence that converges weakly to some nonlocal operator $\D_0$ that is also elliptic respect to the same class $\LI$. Moreover, since $\norm{\D_0^{(k)} - \D_1^{(k)}} \to 0$ and $\norm{\D_0^{(k)}-\D_2^{(k)}} \to 0$, then $\D_1^{(k)}$ and $\D_0^{(k)}$ also converge weakly to $\D_0$.

Since both $u_k$ and $v_k$ have a modulus of continuity $\rho$ on $\bdary B_1$, then by Lemma \ref{l:insidecont} there is a modulus of continuity $\tilde \rho$ that extends to the full unit ball $\overline B_1$. So $u_k$ and $v_k$ have a uniform modulus of continuity on $B_{R_k}$ with $R_k \to \infty$. We can find a subsequence that converges uniformly on compact sets, thus converging almost everywhere in $R^n$. By dominated convergence theorem, $u_k$ and $v_k$ converge in $L^1(\R^n,\omega)$ to some function $g$.

Let $u$ and $v$ be the limits of $u_k$ and $v_k$ respectively. Since $u_k$ and $v_k$ converge to $u$ and $v$ uniformly in $B_1$ and $\sup |u_k-v_k| \geq \eps$ in $B_1$, then $u$ and $v$ must be different. We use Lemma \ref{l:stability} and we get that $u$ and $v$ solve the same equation $\D_0 (u,x) = \D_0 (v,x)=0$. But then $u=v$ which is a contradiction.
\end{proof}

\begin{remark}
We will apply Lemma \ref{l:approximation} to $\D_0$ translation invariant in which case the uniqueness assumption for the boundary problem was proved in \cite{CS}.
\end{remark}

We also present the following simplified version of Lemma \ref{l:approximation} which will not be used in this paper but it is interesting by itself. The difference with respect to Lemma \ref{l:approximation} is that in the following version we fix the boundary value $g$ and in exchange we do not require a modulus of continuity in $B_R \setminus B_1$ but only on $\bdary B_1$.

\begin{lemma} \label{l:approximation2}
For some $\sigma \geq \sigma_0 > 1$ and $\alpha <\sigma_0 - 1$ we consider nonlocal operators $\D_0$, $\D_1$ and $\D_2$ uniformly elliptic respect to $\LI_0(\sigma)$. Assume also that the boundary problem
\begin{align*}
\D_0 u &= 0 \text{ in } B_1 \\
u &= g \text{ in } \R^n \setminus B_1
\end{align*}
does not have more than one solution $u$ for some given boundary condition $g$.

Assume $g$ is continuous on $\partial B_1$. Given $M>0$ and $\eps>0$, there is an $\eta>0$ (small) so that if $u$, $v$, $\D_0$, $\D_1$ and $\D_2$ satisfy
\begin{align*}
\D_0 (v,x) &= 0  && \text{in } B_1, \\
\D_1 (u,x) &\geq -\eta && \text{in } B_1, \\
\D_2 (u,x) &\leq \eta && \text{in } B_1, \\
u &= v = g && \text{in } \R^n \setminus B_1, \\
\norm{\D_1-\D_0} &\leq \eta && \text{in } B_1, \\
\norm{\D_2-\D_0} &\leq \eta && \text{in } B_1,
\end{align*}
then $|u-v| < \eps$ in $B_1$ (in this case $\eta$ depends on $g$).
\end{lemma}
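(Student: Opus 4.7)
The plan is to argue by contradiction, exactly paralleling the proof of Lemma~\ref{l:approximation}, but exploiting the fact that the boundary datum $g$ is now fixed across the contradicting sequence. Suppose the conclusion fails: there exist $\eps>0$, a vanishing sequence $\eta_k \to 0$, and triples of operators $(\D_0^{(k)}, \D_1^{(k)}, \D_2^{(k)})$ together with functions $u_k, v_k$ satisfying all the listed hypotheses with $\eta_k$ in place of $\eta$, yet $\sup_{B_1} |u_k - v_k| \geq \eps$.

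The first step is compactness in the operator variable. By Theorem~\ref{l:subseqconvergesweakly}, after passing to a subsequence, $\D_0^{(k)}$ converges weakly to some limit operator $\D_\infty$ that is still uniformly elliptic with respect to $\LI_0(\sigma)$. Because $\norm{\D_1^{(k)} - \D_0^{(k)}}, \norm{\D_2^{(k)} - \D_0^{(k)}} \leq \eta_k \to 0$, the sequences $\D_1^{(k)}$ and $\D_2^{(k)}$ also converge weakly to the same $\D_\infty$. Next I would obtain compactness of $u_k$ and $v_k$ in $B_1$: outside $B_1$ both functions equal the fixed function $g$, so nothing needs to be done there; on $\partial B_1$ both take the value $g$, which is uniformly continuous on the compact set $\partial B_1$, so Theorem~\ref{t:boundarycontinuity} provides a common modulus of continuity for $u_k$ and $v_k$ on $\overline{B_1}$ independent of $k$. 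Extract a further subsequence so that $u_k \to u$ and $v_k \to v$ uniformly on $\overline{B_1}$; outside $B_1$ the identification with $g$ gives $L^1(\R^n,\omega)$ convergence trivially, since $u_k - u$ and $v_k - v$ are supported in $B_1$, where uniform convergence implies $L^1(\omega)$ convergence.

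Once these two pieces of compactness are in place, the remainder is straightforward. Apply the stability result Lemma~\ref{l:stability} to each of the two differential inequalities to deduce $\D_\infty u = 0$ and $\D_\infty v = 0$ in $B_1$, with $u = v = g$ on $\R^n \setminus B_1$. The uniqueness hypothesis (which is for precisely this fixed boundary datum $g$) then forces $u \equiv v$, contradicting the fact that $\sup_{B_1}|u - v| \geq \eps$ inherited from the uniform convergence of $u_k - v_k$.

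The main obstacle I anticipate is ensuring that the uniqueness hypothesis can actually be applied to the \emph{limit} operator $\D_\infty$, since in the abstract setting this operator need not coincide with any of the $\D_0^{(k)}$ appearing in the contradicting sequence. The cleanest way to handle this is to regard $\D_0$ as fixed (the same operator for every $k$) so that $\D_\infty = \D_0$ and uniqueness applies directly, consistent with the intended applications where $\D_0$ is a specific translation invariant operator for which uniqueness has been established in \cite{CS}. A secondary technical point is that Theorem~\ref{t:boundarycontinuity} requires a modulus of continuity on $\R^n \setminus B_1$, not only on $\partial B_1$, so one should either strengthen the hypothesis on $g$ accordingly, or first truncate $g$ and absorb the tails via Assumption~\ref{a:kernelK} in the spirit of Corollary~\ref{c:boundarycontinuity}.
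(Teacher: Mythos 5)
Your argument reproduces the paper's proof essentially verbatim: argue by contradiction along a vanishing sequence $\eta_k$, extract a weakly convergent subsequence of operators via Theorem~\ref{l:subseqconvergesweakly}, extract a uniformly convergent subsequence of $u_k,v_k$ on $\overline{B_1}$ using the fixed boundary datum $g$ and the boundary-regularity machinery of Section~\ref{s:boundaryregularity} (the paper invokes Lemma~\ref{l:insidecont} where you invoke Theorem~\ref{t:boundarycontinuity}, but these are equivalent for this purpose), pass to the limit with Lemma~\ref{l:stability}, and contradict uniqueness. The two caveats you flag — that uniqueness is only hypothesized for $\D_0$ and hence one should regard $\D_0$ as fixed along the sequence, and that the continuity hypothesis on $g$ as literally stated is slightly weaker than what Lemma~\ref{l:insidecont}/\ref{l:bcont} requires — are both genuine imprecisions in the paper's own terse proof, and your proposed remedies (fix $\D_0$, as the subsequent remark about translation-invariant $\D_0$ from~\cite{CS} does implicitly; truncate and absorb tails as in Corollary~\ref{c:boundarycontinuity}) are exactly right.
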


\begin{proof}
The proof follows the same line as the one of Lemma \ref{l:approximation}. We do it by contradiction assuming the result was false. There there would be a sequence $R_k$, $\D_0^{(k)}$, $\D_1^{(k)}$, $\D_2^{(k)}$, $\eta_k$, $u_k$, $v_k$, $\D_k$, $f_k$ such that $\eta_k \to 0$, all the assumptions of the lemma are valid but nonetheless $\sup |u_k-v_k| \geq \eps$ in $B_1$.

The functions $u_k$ and $v_k$ have a fixed value $g$ outside $B_1$. Since $g$ is continuous on $\partial B_1$, by Lemma \ref{l:insidecont}, $u_k$ and $v_k$ are equicontinuous in $\overline{B_1}$. So there is a subsequence that converges uniformly in $\overline B_1$.

We continue as in the proof of Lemma \ref{l:approximation}. We can take a subsequence such that $\D_1^{(k)}$ and $\D_0^{(k)}$ converge weakly to $\D_0$. Let $u$ and $v$ the limits of $u_k$ and $v_k$ respectively, so we know that $\sup |u_k-v_k| \geq \eps$ in $B_1$. But by Lemma \ref{l:stability}, $u$ and $v$ must solve the same equation $\D_0 (u,x) = \D_0 (v,x)=0$. But then $u=v$ which is a contradiction.
\end{proof}

\section{$C^{1,\alpha}$ regularity for variable coefficient equations}
\label{s:estimate}

The main purpose of this section is to prove a $C^{1,\alpha}$ estimate for nonlocal equations that are not necessarily translation invariant. We will give the result in its most general form, and in the next section we will provide a variety of applications.

It is important to stress the concept of scaling. Since our proofs will involve successive rescaling, we will need some kind of scale invariance. It is not necessary to require a particular equation to be scale invariant, but instead we will consider our equations within a whole class of equations that is scale invariant for which our regularity result up to the boundary is supposed to apply, and we will jump freely from equation to equation in our rescaling process. We say that the class $\LI$ has scale $\sigma$ if every time the integro-differential operator with kernel $K(y)$ is in $\LI$, then the one with kernel $\lambda^{n+\sigma} K(\lambda y)$ is also in $\LI$ for any $\lambda < 1$. For example the class $\LI_0$ defined in \eqref{e:uniformellipticity} has scale $\sigma$, whereas the class $\LI_\ast$ defined in \eqref{e:lic1a1}-\eqref{e:lic1a2} does not because of condition \eqref{e:lic1a2}.

It is a straight forward computation to check that if $\LI$ has scale $\sigma$ and $u$ solves an equation $\D(u,x)= f(x)$ that is elliptic with respect to $\LI$, then the function $w(x) = \mu u(\lambda x)$ solves a uniformly elliptic equation with respect to the same class $\LI$, $\D_{\mu, \lambda} (w,x) = \lambda^\sigma \mu f(\lambda x)$. For example, if $\D(u,x)$ corresponds to an integro-differential operator
\[ \D(u,x) = \int_{\R^n} (u(x+y) + u(x-y) - 2u(x)) K(y) \dd y \]
then $\D_{\mu, \lambda}$ corresponds to the integro-differential operator
\[ \D_{\mu, \lambda}(u,x) = \int_{\R^n} (u(x+y) + u(x-y) - 2u(x)) \lambda^{n+\sigma} K(\lambda y) \dd y .\]
Note that the coefficient $\mu$ does not have any effect on a linear operator.

We will call $\LI_1$ to the largest scale invariant class contained in the class $\LI_\ast$ defined in \eqref{e:lic1a1}-\eqref{e:lic1a2}. This is the class of integro-differential operators with kernels $K$ such that
\begin{align} 
(2-\sigma)\frac{\lambda}{|y|^{n+\sigma}} &\leq K(y) \leq (2-\sigma)\frac{\Lambda}{|y|^{n+\sigma}} \label{e:siclass1}\\ 
|\grad K(y)| &\leq C |y|^{-n-\sigma-1} \qquad \text{in } \R^n \setminus \{0\}. \label{e:siclass2}
\end{align}
From Theorem \ref{t:c1a}, an equation $\D_0(u,x)=0$ has interior $C^{1,\bar \alpha}$ estimates for some $\bar \alpha > 0$ if $\D_0$ is uniformly elliptic respect to the class $\LI_1$.

Our main theorem in this section states essentially that if an equation $\D^{(0)}(u,x)=0$ is uniformly elliptic respect to a scale invariant class with interior $C^{1,\bar \alpha}$ estimates and we have another equation $\D(u,x)=f(x)$ for which the operators $\D(-,x)$ stay close to $\D^{(0)}$, then this equation also has interior $C^{1,\alpha}$ estimates for any $\alpha < \min(\bar \alpha, \sigma-1)$. Since our proof involves successive rescaling, we need to measure \emph{closeness} at every scale, so we define a distance in between operators that takes scaling of order $\sigma$ into account.

\begin{defn} \label{d:scalednorm}
Given $\sigma \in (0,2)$ and an operator $\D$, we define the rescaled operator as above
\[ \D_{\mu, \lambda} (w,x) = \lambda^\sigma \mu \D (\mu^{-1} w(\lambda^{-1} -), \lambda x) \]

The norm of scale $\sigma$ is defined as
\[ \norm{\D^{(1)} - \D^{(2)}}_\sigma = \sup_{\lambda<1} \norm{\D^{(1)}_{1,\lambda} - \D^{(2)}_{1,\lambda}} \]

where $\norm{.}$ is the norm defined in Definition \ref{d:norm}.
\end{defn}

The purpose of the rescaled operator is to stress that if $u$ solves the equation $\D(u,x)=f(x)$ in $B_\lambda$, then the rescaled function $w(x) = \mu u(\lambda x)$ solves an equation of the same ellipticity type $\D_{\mu,\lambda} (w,x) = \lambda^\sigma f(\lambda x)$ in $B_1$.

The following is the main theorem of this work.

\begin{thm} \label{t:main}
Assume $\sigma > \sigma_0 > 1$.
Let $\D^{(0)}$ be a fixed translation invariant nonlocal operator in a class $\LI \subset \LI_0(\sigma)$ with scale $\sigma$ and interior $C^{1,\bar \alpha}$ estimates (for example $\LI = \LI_1$).

Let $\D^{(1)}$ and $\D^{(2)}$ be two nonlocal operators, elliptic with respect to $\LI_0(\sigma)$, and assume that \[\norm{\D^{(0)} - \D^{(j)}}_\sigma < \eta\] for some $\eta>0$ small enough and $j=1,2$.

Let $u$ be a bounded function that solves the equation
\begin{align*}
\D^{(1)}(u,x) &\geq f_1(x) \qquad \text{in } B_1 \\
\D^{(2)}(u,x) &\leq f_2(x) \qquad \text{in } B_1 \\
\end{align*}
for a couple of bounded functions $f_1$ and $f_2$.

Then $u \in C^{1,\alpha}(B_{1/2})$ for any $\alpha < \min(\bar \alpha,\sigma_0-1)$ and
\[ \norm{u}_{C^{1,\alpha}(B_{1/2})} \leq C (\norm{u}_{L^\infty(\R^n)}+\norm{f_1}_{L^\infty(B_1)}+\norm{f_2}_{L^\infty(B_1)}) \]
and the estimate depends only on $\sigma_0$, $\lambda$, $\Lambda$ and dimension, but not on $\sigma$.
\end{thm}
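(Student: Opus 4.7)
After a standard normalization I may assume $\norm{u}_{L^\infty(\R^n)} + \norm{f_1}_{L^\infty(B_1)} + \norm{f_2}_{L^\infty(B_1)} \leq 1$. Fixing any $x_0 \in B_{1/2}$, the goal reduces to producing affine functions $\ell_0, \ell_1, \ell_2, \dots$ and a radius $r \in (0,1/2)$ (depending only on $\alpha$, $\bar\alpha$, $\sigma_0$, $\lambda$, $\Lambda$, $n$) such that
\[
\sup_{B_{r^k}(x_0)} |u - \ell_k| \leq r^{k(1+\alpha)}, \qquad |\ell_{k+1}-\ell_k(x_0)|_{L^\infty(B_{r^k}(x_0))} \leq C\, r^{k(1+\alpha)}.
\]
Once this is established, the Cauchy property of the gradients $\nabla \ell_k$ yields the pointwise $C^{1,\alpha}$ estimate at $x_0$ with the quantitative bound in the statement.

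The inductive step is where the approximation machinery enters. Assume the claim at step $k$, set $\lambda = r^k$ and define the rescaled increment
\[
w(x) := \frac{u(x_0 + \lambda x) - \ell_k(x_0 + \lambda x)}{\lambda^{1+\alpha}}.
\]
Then $|w| \leq 1$ in $B_1$; outside $B_1$ the $L^\infty$ bound on $u$ and the linear growth of $\ell_k$ give $|w(x)| \leq M(1+|x|)^{1+\alpha}$ (this is precisely the place where the hypothesis $\alpha < \sigma_0 - 1$ is used, so that the tails remain integrable against $\omega$). Since $\ell_k$ is affine it is killed by any linear operator in $\LI_0(\sigma)$, so $w$ satisfies rescaled versions of the two inequalities driven by the rescaled operators $\D^{(j)}_{1,\lambda}$. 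By the definition of the scale-$\sigma$ norm (Definition \ref{d:scalednorm}), one has $\norm{\D^{(0)}_{1,\lambda}-\D^{(j)}_{1,\lambda}} \leq \eta$ uniformly in $k$, and the right-hand sides are bounded by $\lambda^{\sigma - (1+\alpha)}\norm{f_j}_\infty$, which is $\leq \eta$ for $\lambda$ small because $\sigma > 1 + \alpha$. So $w$ satisfies exactly the hypotheses needed to invoke Lemma \ref{l:approximation}, provided I can also produce a uniform modulus of continuity on a large annulus $B_R \setminus B_1$ and suitable boundary modulus on $\bdary B_1$.

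This modulus is supplied by the interior Hölder theorem: $w$ satisfies $\Mp w \geq -C$ and $\Mm w \leq C$ in $B_{1/\lambda}$ (rescaled), so Theorem \ref{t:ca} provides a uniform $C^\alpha$ estimate on compact subsets, and Corollary \ref{c:boundarycontinuity} promotes this to a uniform modulus up to and across $\bdary B_1$. Lemma \ref{l:approximation} then gives a function $h$ satisfying $\D^{(0)}(h,x) = 0$ in $B_1$ with $h = w$ outside, such that $|w - h| < \eps$ in $B_1$. Because $\D^{(0)}$ has interior $C^{1,\bar\alpha}$ estimates by hypothesis (and the bound on $\norm{h}_{L^\infty}$, $\norm{h}_{L^1(\omega)}$, $|\D^{(0)} 0|$ is controlled by what $w$ inherits), there is an affine function $\ell$ with $|h - \ell| \leq C\, s^{1+\bar\alpha}$ in $B_s$ for every $s \leq 1/2$. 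Combining,
\[
\sup_{B_r} |w - \ell| \leq C r^{1+\bar\alpha} + \eps.
\]
Choose $r$ small so that $C r^{1+\bar\alpha} \leq \tfrac12 r^{1+\alpha}$ (possible since $\alpha < \bar\alpha$), then choose $\eta$ small enough that $\eps \leq \tfrac12 r^{1+\alpha}$. Rescaling back produces $\ell_{k+1}$ and closes the induction.

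The main obstacle I anticipate is not any single calculation but the bookkeeping required to keep the hypotheses of Lemma \ref{l:approximation} uniform across all scales $k$. In particular: (i) verifying that the scale-invariant norm $\norm{\cdot}_\sigma$ really controls the closeness of the rescaled pair $\D^{(j)}_{1,\lambda}$ to $\D^{(0)}_{1,\lambda}$ independently of $\lambda$; (ii) producing a \emph{single} modulus $\rho$ on $B_R \setminus B_1$ in the rescaled variables that does not deteriorate with $k$, which ultimately reduces to the scale-invariant Hölder estimate of Theorem \ref{t:ca}; and (iii) making sure the polynomial growth bound $|w(x)| \leq M(1+|x|)^{1+\alpha}$ is preserved under iteration with $M$ independent of $k$, using that the correction from $\ell_{k+1} - \ell_k$ over distances $\gg r^k$ is controlled by $\sum_j r^{j\alpha}$, which is summable.
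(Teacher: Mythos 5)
The structure of your argument mirrors the paper's: normalize, iterate affine approximations, rescale, invoke Lemma \ref{l:approximation} against the frozen operator $\D^{(0)}$, transfer its $C^{1,\bar\alpha}$ estimate, and close the induction. The key lemmas you cite (Theorem \ref{t:ca}, Corollary \ref{c:boundarycontinuity}, Lemma \ref{l:approximation}) are exactly the ones the paper uses. However, there is a genuine gap in the step you yourself flag as obstacle (iii): the exterior growth bound.

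You assert that ``the $L^\infty$ bound on $u$ and the linear growth of $\ell_k$ give $|w(x)| \leq M(1+|x|)^{1+\alpha}$'' with $M$ independent of $k$. Write $w_k(x)=(u-\ell_k)(x_0+r^kx)/r^{k(1+\alpha)}$. The summability of $|a_{j+1}-a_j|\leq r^{j(1+\alpha)}$ and $|b_{j+1}-b_j|\leq C_2 r^{j\alpha}$ indeed gives $|\ell_k(y)|\leq C(1+|y-x_0|)$ with $C$ independent of $k$, so $|u-\ell_k|(y)\leq C(1+|y-x_0|)$. But dividing by $r^{k(1+\alpha)}$ and setting $|x|=1$ produces $|w_k|\lesssim r^{-k(1+\alpha)}$, which blows up as $k\to\infty$. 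So the naive bound forces the constant $M$ to grow with $k$, and since the $\eta$ produced by Lemma \ref{l:approximation} depends on $M$, the iteration would not close with a uniform $\eta$. The same failure occurs if you try to maintain $|w_k(x)|\leq M|x|^{1+\alpha}$ inductively: when you pass from $w_k$ to $w_{k+1}$, the contribution of the linear correction $\bar l$ adds a term of size $C_2\lambda^{-\alpha}|x|\sim C_2|x|^{1+\alpha}$ on $|x|>\lambda^{-1}$, so the prefactor increases by $\approx C_2$ every step. The paper circumvents this by choosing a \emph{strictly larger} auxiliary exponent $\alpha<\alpha_1<\min(\bar\alpha,\sigma_0-1)$ and maintaining inductively $|w_k(x)|\leq|x|^{1+\alpha_1}$ outside $B_1$; then the correction from $\bar l$ contributes $C_2\lambda^{\alpha_1-\alpha}|x|^{1+\alpha_1}$, whose coefficient can be made arbitrarily small by shrinking $\lambda$, so the bound is preserved with no accumulating constant. (An alternative repair, consistent with your exponent $\alpha$, is to derive the exterior bound from the inductive hypothesis at \emph{all} scales $j\leq k$: for $r^{j+1}\leq|y-x_0|\leq r^j$ one has $|u-\ell_j|(y)\leq r^{j(1+\alpha)}$ and $|\ell_j-\ell_k|(y)\leq Cr^{j(1+\alpha)}$, which gives $|w_k(x)|\leq C|x|^{1+\alpha}$ on $1\leq|x|\leq r^{-k}$ — but this is a different, more delicate argument than the one you sketched, and is not what ``summability of $\sum_j r^{j\alpha}$'' alone yields.)

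A smaller point: your normalization gives $\norm{f_j}_{L^\infty}\leq 1$, but Lemma \ref{l:approximation} at the base step $k=0$ (where $\lambda=1$) requires $\norm{f_j}_{L^\infty}\leq\eta$; you should scale $u$ and $f_j$ by a multiplicative constant so that the data are already of size $\eta$, as the paper does, rather than relying on the factor $\lambda^{\sigma-1-\alpha}$ which only helps for $k\geq1$.
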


The proof of this theorem is based on an approximation argument inspired in \cite{C2} using Lemma \ref{l:approximation}.

\begin{proof}
By scaling the problem, we can assume without loss of generality that 
\begin{align*}
\norm{f_1}_{L^\infty(B_1)}&<\eta, \\
\norm{f_2}_{L^\infty(B_1)}&<\eta, \\
\norm{u}_{L^\infty(\R^n)} &\leq 1,
\end{align*}
 and $u$ solves the equation in some large ball $B_{2R}$.

We will show that there is a $\lambda>0$ and a sequence of linear functions
\[ l_k(x) = a_k + b_k \cdot x , \]
such that
\begin{align}
\sup_{B_{\lambda^k}} |u-l_k| &\leq \lambda^{k(1+\alpha)} \label{e:c1}\\
|a_{k+1} - a_k| &\leq \lambda^{k(1+\alpha)} \label{e:c2}\\
\lambda^k|b_{k+1} - b_k| &\leq C_2 \lambda^{k(1+\alpha)} \label{e:c3} 
\end{align}

It is a standard computation that these conditions imply that $u$ is $C^{1,\alpha}$ at the origin (see \cite{C2}).

Let $l_0 = 0$. We proceed by induction. Assume we have \eqref{e:c1}-\eqref{e:c2}-\eqref{e:c3} up to some value of $k$. We will now show it for $k+1$. Consider
\[ w_k(x) = \frac{ [u - l_k] (\lambda^k x) }{\lambda^{k(1+\alpha)}}, \]
then since $\LI$ has scale $\sigma$, $w_k$ solves an equation of the same ellipticity type
\[ \D_k^{(1)}(w_k,x) = \D^{(1)}_{\lambda^{-k(1+\alpha)}, \lambda^k} (w_k,x) \geq  \lambda^{k(\sigma-1-\alpha)} f_1(\lambda^k x) \qquad \text{(and } \leq \text{ for } \D_k^{(2)} \text{ and $f_2$ )}\]

Since $\sigma-1-\alpha \geq \sigma_0 - 1 - \alpha > 0$, the right hand side becomes smaller as $k$ increases. Moreover, \[\norm{\D^{(j)}_k - \D^{(0)}_k} \leq \norm{\D^{(j)} - \D^{(0)}}_\sigma < \eta.\]

By the inductive hypothesis, we have that $|w_k|\leq 1$ in $B_1$. Let $\alpha < \alpha_1 <\min(\bar \alpha,\sigma_0)$, we will show that we can construct the sequence $l_k$ so that we also have
\begin{equation*}
w_k(x) \leq |x|^{1+\alpha_1} \qquad \text{if } x \notin B_1
\end{equation*}

We choose $R$ from Lemma \ref{l:approximation}, so that we have that $w_k$ is H\"older continuous in $B_R \setminus B_1$ and we can apply Lemma \ref{l:approximation} to the function $h$ that solves
\begin{align*}
\D^{(0)}_k h &= 0 && \text{in } B_1 \\
h(x) &= w_k(x) &&  \text{in } \R^n \setminus B_1 
\end{align*}
then $|w_k-h| < \eps(\eta,R)$ in $B_1$.

Since $\D^{(0)}_k$ is translation invariant and elliptic with respect to $\LI$, it has interior $C^{1,\bar \alpha}$ estimates. Let $\bar l = \bar a + \bar b \cdot x$ be the linear part of $h$ at the origin. Since $|h| \leq 1 + \eps(\eta,R)$ in $B_1$, $|\bar a| \leq 1 + \eps(\eta,R)$. By the $C^{1,\bar \alpha}$ estimates, $\bar b \leq C_2$. We also have that $|h(x) - \bar l(x)| \leq C_3 |x|^{1+\bar \alpha}$ in $B_{1/2}$. So, we have the following estimates:
\begin{align}
|w_k(x) - \bar l(x)| &\leq \eps(\eta,R) + C_3 \lambda^{1+\bar \alpha} && \text{in } B_{1/2} \label{e:b1}\\ 
|w_k(x) - \bar l(x)| &\leq \eps(\eta,R) + 2 + C_2 && \text{in } B_1 \setminus B_{1/2} \label{e:b2}\\
|w_k(x) - \bar l(x)| &\leq \eps(\eta,R) + 1 + |x|^{1+\alpha_1} + C_2 |x| && \text{in }\R^n \setminus B_1 \label{e:b3}
\end{align}

We will choose $\lambda$ small to be determined below, and then $\eta$ and $R$ so that $\eps(\eta,R) \leq \lambda^{1+\bar \alpha}$. The question is how to pick $\lambda$ so that we can complete the inductive step. We have
\begin{align*}
l_{k+1}(x) &= l_k(x) + \lambda^{k(1+\alpha)} \bar l\left( \frac x {\lambda^k} \right), \\
w_{k+1} &= \frac{ (u - l_{k+1})(\lambda^k x) }{\lambda^{k(1+\alpha)}} = \frac{ (w_k - \bar l)(\lambda x)}{\lambda^{1+\alpha}}.
\end{align*}

Let us analyze the bounds \eqref{e:b1}, \eqref{e:b2} and \eqref{e:b3} in the rescaled setting. We get
\begin{align*}
|w_{k+1}(x)| &\leq \lambda^{\bar \alpha - \alpha} + C_3 \lambda^{\bar \alpha  - \alpha} && \text{in } B_{\lambda^{-1} / 2} \\ 
|w_{k+1}(x)| &\leq \lambda^{\bar \alpha - \alpha} + (2 + C_2) \lambda^{-(1+\alpha)} && \text{in } B_1 \setminus B_{\lambda^{-1}/2} \\
|w_{k+1}(x)| &\leq \lambda^{\bar \alpha - \alpha} + 1 + |\lambda|^{\alpha_1-\alpha} |x|^{1+\alpha_1} + C_2 |\lambda|^{-\alpha} |x| && \text{in }\R^n \setminus B_{\lambda^{-1}}
\end{align*}

We start by choosing $\lambda$ small enough so that $(1+C_3)\lambda^{\bar \alpha - \alpha} < 1$ and we have $|w_{k+1}| \leq 1$ in $B_1$. If we choose $\lambda$ small enough, we also have the following estimates in each of the three rings
\begin{align*}
|w_{k+1}(x)| &\leq \lambda^{\bar \alpha - \alpha} + C_3 \lambda^{\bar \alpha  - \alpha} \leq 1 \leq |x|^{1+\alpha_1} && \text{in } B_{\lambda^{-1} / 2} \setminus B_1 \\ 
|w_{k+1}(x)| &\leq \lambda^{\bar \alpha - \alpha} + (2 + C_2) \lambda^{-(1+\alpha)} \leq C \lambda^{\alpha_1-\alpha} \lambda^{-(1+\alpha_1)} \leq |x|^{1+\alpha_1} && \text{in } B_{\lambda^{-1}} \setminus B_{\lambda^{-1}/2} \\
|w_{k+1}(x)| &\leq \lambda^{\bar \alpha - \alpha} + 1 + |\lambda|^{\alpha_1-\alpha} |x|^{1+\alpha_1} + C_2 |\lambda|^{-\alpha} |x| 
\leq |x|^{1+\alpha_1}  && \text{in }\R^n \setminus B_{\lambda^{-1}}
\end{align*}
so that $|w_{k+1}| < |x|^{1+\alpha_1}$ outside $B_1$. Moreover we observe that
\begin{align*}
|a_{k+1} - a_k| \leq \lambda^{k(1+\alpha)} \\
\lambda^k |b_{k+1} - b_k| \leq C_2 \lambda^{k(1+\alpha)}
\end{align*}
where $C_2$ is the same as above and depends only on the $C^{1,\alpha}$ estimates. We also have $u(x) - l_{k+1}(x) = \lambda^{k(1+\alpha)} \left(w(\frac x {\lambda^k}) - \bar l(\frac x {\lambda^k}) \right)$, thus
\[ |u(x) - l_{k+1}(x)| \leq \lambda^{(k+1)(1+\alpha)} \qquad \text{in } B_{\lambda^{k+1}} . \]
This completes the inductive step and the proof.
\end{proof}

\section{Applications} \label{s:applications}
Since the main theorem in section \ref{s:estimate} was stated in its more general way which is somewhat abstract, we provide concrete applications in this section aimed at showing the strength of this result.

\subsection{Linear equations with variable coefficients}
We start by stating the result of the theorem in the simplest case when the equation is linear and close to an operator in $\LI_1$. This is the integral version of the classical Cordes-Nirenberg theorem.

\begin{thm} \label{t:linear}
Let $\sigma >1$ and let $u$ be a bounded function that solves the equation
\[\D(u,x) := \int_{\R^n} \si u(x,y) (2-\sigma)\frac{a(x,y)}{|y|^{n+\sigma}} \dd y  = f(x) \qquad \text{in } B_1\]
Assume that $f \in L^\infty(\R^n)$ and $|a(x,y) - a_0(y)| < \eta$ for every $x \in B_1$ and some small $\eta>0$. Assume $a_0$ is a bounded function such that $K(y) = (2-\sigma) a_0(y)/|y|^{n+\sigma}$ satisfies \eqref{e:siclass1} and \eqref{e:siclass2}.
Then for any $\alpha < \sigma-1$, $u \in C^{1,\alpha}(B_{1/2})$ if $\eta$ is small enough and
\[ \norm{u}_{C^{1,\alpha}(B_{1/2})} \leq C (\norm{u}_{L^\infty(\R^n)}+\norm{f}_{L^\infty(B_1)}). \]
\end{thm}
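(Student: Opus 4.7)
The approach is to apply the main theorem (Theorem \ref{t:main}) with the comparison operator $\D^{(0)}$ taken to be the translation-invariant operator with kernel $K_0(y) = (2-\sigma) a_0(y)/|y|^{n+\sigma}$, and with $\D^{(1)} = \D^{(2)} = \D$, $f_1 = f_2 = f$. The first verification is that $\D^{(0)}$ falls under the hypothesis of Theorem \ref{t:main}: by assumption $K_0$ satisfies \eqref{e:siclass1} and \eqref{e:siclass2}, which is precisely the definition of the scale-invariant class $\LI_1$, and Theorem \ref{t:c1a} then provides interior $C^{1,\bar\alpha}$ estimates for $\D^{(0)}$. It is also immediate that $\D$ and $\D^{(0)}$ are uniformly elliptic with respect to $\LI_0(\sigma)$.

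The heart of the argument is to show that $\norm{\D - \D^{(0)}}_\sigma \leq C\eta$. A change of variables $y = \lambda z$ turns the rescaled operator into
\[
\D_{1,\lambda}(w,x) = \int_{\R^n} \si w(x,z) \, (2-\sigma) \frac{a(\lambda x, \lambda z)}{|z|^{n+\sigma}} \dd z,
\]
and analogously for $\D^{(0)}_{1,\lambda}$ with $a_0(\lambda z)$ in the numerator. For $x \in B_1$ and $\lambda \leq 1$ we have $\lambda x \in B_1$, so the hypothesis $|a(x,y) - a_0(y)| < \eta$ gives the pointwise kernel bound $|a(\lambda x,\lambda z) - a_0(\lambda z)| < \eta$ uniformly in $\lambda$. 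This is the key observation: the closeness is invariant under the $y$-rescaling because the hypothesis holds for \emph{every} $y$.

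Testing the difference against a function $w$ satisfying the conditions in Definition \ref{d:norm}, I would split the integral at $|z| = 1$. On $|z| < 1$ the $C^{1,1}$ bound gives $|\si w(x,z)| \leq 2M|z|^2$, and the $(2-\sigma)$ factor absorbs the remaining singular integral, producing a contribution $\leq CM\eta$. On $|z| \geq 1$ one estimates $|\si w(x,z)| \leq |w(x+z)| + |w(x-z)| + 2|w(x)|$; the translated integrals are controlled by $\norm{w}_{L^1(\R^n,\omega)} \leq M$ using Assumption \ref{a:kernelK} together with \eqref{e:omegadisks}, and $|w(x)|$ itself is controlled via the $C^{1,1}$ condition at $x$ combined with the $L^1$-bound by $C(1+M)$. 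Combining, one gets $|(\D_{1,\lambda} - \D^{(0)}_{1,\lambda})(w,x)| \leq C\eta(1+M)$ uniformly in $\lambda \in (0,1]$, which is exactly $\norm{\D - \D^{(0)}}_\sigma \leq C\eta$.

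With this in hand, by choosing $\eta$ small so that $C\eta$ is below the threshold supplied by Theorem \ref{t:main}, that theorem directly yields $u \in C^{1,\alpha}(B_{1/2})$ with the stated estimate for any $\alpha < \min(\bar\alpha, \sigma - 1)$. The only nontrivial step is really the scaled-norm computation; this is straightforward because the hypothesis on the coefficients is built to be scale invariant, which is the whole reason the abstract framework was designed this way. I do not expect any serious obstacle beyond carefully tracking the constants in the splitting at $|z| = 1$ and invoking Assumptions \ref{a:kernelK}--\ref{a:boundedness} and \eqref{e:omegadisks} to handle the tail.
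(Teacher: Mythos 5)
Your argument follows the same route as the paper: instantiate Theorem \ref{t:main} with $\D^{(0)}$ the translation-invariant operator with kernel $(2-\sigma)a_0(y)/|y|^{n+\sigma}$, perform the change of variables $y = \lambda z$ to see that $\D_{1,\lambda}$ has kernel $(2-\sigma)a(\lambda x, \lambda z)/|z|^{n+\sigma}$, use the pointwise scale-invariance of the hypothesis $|a(x,y) - a_0(y)| < \eta$, and split the resulting integral at $|z|=1$ (near part via the $C^{1,1}$ condition, tail via the $L^1(\omega)$ bound) to conclude $\norm{\D - \D^{(0)}}_\sigma \leq C\eta$. That computation is correct and is exactly the content of the paper's proof.

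However, there is a gap in the range of exponents $\alpha$. You invoke Theorem \ref{t:c1a} to supply $C^{1,\bar\alpha}$ estimates for $\D^{(0)}$, and then (correctly) observe that Theorem \ref{t:main} yields $u \in C^{1,\alpha}$ only for $\alpha < \min(\bar\alpha, \sigma - 1)$. But Theorem \ref{t:c1a} only guarantees some small universal $\bar\alpha > 0$, which need not be $\geq \sigma - 1$; so your conclusion is weaker than the statement, which asserts $C^{1,\alpha}$ for \emph{every} $\alpha < \sigma - 1$. The paper closes this gap with a separate observation: because $\D^{(0)}$ is a \emph{linear} operator whose kernel is independent of $x$, any directional derivative of a solution solves the same equation, so solutions are in fact $C^{2,\alpha}$, and hence $\D^{(0)}$ has interior $C^{1,1}$ estimates. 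Taking $\bar\alpha = 1$ (which always exceeds $\sigma - 1$ since $\sigma < 2$) recovers the full claimed range. Without that extra step, which uses the linearity and translation invariance of the frozen operator rather than the generic nonlinear estimate of Theorem \ref{t:c1a}, you cannot deduce the theorem as stated.
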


\begin{proof}
We apply Theorem \ref{t:main}. In this case $\D^{(0)}$ is given by
\[ \D^{(0)} (u,x) = \int_{\R^n} \si u(x,y) \frac{(2-\sigma)a_0(y)}{|y|^{n+\sigma}} \dd y \]

By assumption, this operator $\D^{(0)}$ belongs to $\LI_1$, which is a scale invariant class with interior $C^{1,\alpha}$ estimates. In this case it is easy to see that since the equation is linear and the coefficients do not depend on $x$, the derivatives of the solution solve the same equation and the solutions are actually $C^{2,\alpha}$. So in particular $\D^{(0)}$ has interior $C^{1,1}$ estimates.

The weight $\omega$ must be a weight that controls the tails of the kernel. In this case a natural choice is
\[ \omega(y) = \frac{1}{(1+|y|)^{n+\sigma}}. \]

Now we estimate $\norm{\D-\D^{(0)}}_\sigma$,
\[\norm{\D-\D^{(0)}}_\sigma = \sup_{\lambda<1} \norm{\D_{1,\lambda} - (\D^{(0)})_{1,\lambda}}. \]
We compute
\[ (\D_{1,\lambda} - (\D^{(0)})_{1,\lambda}) (u,x) = \int_{\R^n} \si u(x,y) \frac{(2-\sigma)(a(\lambda x,\lambda y)-a_0(\lambda y))}{|y|^{n+\sigma}} \dd y \]
with $|a(\lambda x,\lambda y)-1| < \eta$. From Definition \ref{d:norm}
\begin{equation}
\begin{split}
\norm{\D_{1,\lambda} - (\D^{(0)})_{1,\lambda}} &= \sup_{u \text{ as in Definition \ref{d:norm}}} \frac{\D_{1,\lambda} (u,x) - (\D^{(0)})_{1,\lambda} (u,x)}{1+M} \\
&\leq \sup_{u \text{ as in Definition \ref{d:norm}}} \frac{1}{1+M}\int_{\R^n} |\si u(x,y)| \frac{(2-\sigma)\eta}{|y|^{n+\sigma}} \dd y \label{e:s01}
\end{split}
\end{equation}

In Definition \ref{d:norm} we take functions $u$ such that $\norm{u}_{L^1(\R^n,\omega)}$ and $|u(y)-u(x)-(y-x)\cdot \grad u(x)| \leq M|x-y|^2$ for $y \in B_1(x)$. Note that these two conditions together imply $|u(x)| \leq CM$ for some universal constant $C$.

From $|u(y)-u(x)-(y-x)\cdot \grad u(x)| \leq M|x-y|^2$, we obtain
\begin{align*}
\int_{B_1} |\si u(x,y)| \frac{(2-\sigma)}{|y|^{n+\sigma}} \dd y &\leq \int_{B_1} M|y|^2 \frac{(2-\sigma)}{|y|^{n+\sigma}} \dd y \\
&\leq CM
\end{align*}

On the other hand, from $\norm{u}_{L^1(\R^n,\omega)}$ and $|u(x)| \leq CM$ we bound the tail of the integral
\begin{align*}
\int_{\R^n \setminus B_1} |\si u(x,y)| \frac{(2-\sigma)}{|y|^{n+\sigma}} \dd y &\leq \int_{\R^n \setminus B_1} |u(x+y)|+|u(x-y)|+|u(x)| \frac{(2-\sigma)}{|y|^{n+\sigma}} \dd y \\
&\leq CM
\end{align*}

Replacing in \eqref{e:s01}, $\norm{\D_{1,\lambda} - \D^{(0)}_{1,\lambda}} \leq C \eta$ for any $\lambda$, thus $\norm{\D-\D^{(0)}}_\sigma \leq C \eta$. So if we choose $\eta$ small enough we can apply Theorem \ref{t:main} and conclude that the equation $\D(u,x)=f$ has interior $C^{1,\alpha}$ estimates for any $\alpha < \sigma-1$.
\end{proof}

It is interesting to note in the theorem above that if we fix an $\alpha>0$, the constant $C$ in Theorem \ref{t:linear} does not blow up as $\sigma \to \infty$.

\subsection{Nonlinear equations with variable coefficients}

By combining Theorem \ref{t:main} with Theorem \ref{t:c1a} we obtain the following result
\begin{thm} \label{t:main2}
Let $\sigma \geq \sigma_0 >1$ and let $\LI_1(\sigma)$ be the class defined in \eqref{e:siclass1}--\eqref{e:siclass2}.

Assume $\D^{(0)}$ is translation invariant nonlocal operator uniformly elliptic respect to $\LI_1(\sigma)$ and $\D$ uniformly elliptic respect to $\LI_0$. Assume $\norm{\D(- ,x)- \D^{(0)}}_\sigma < \eta$ for every $x \in B_1$ and some small $\eta>0$. Let $f \in L^\infty(B_1)$ and $u$ be a bounded function that solves the equation
\[\D(u,x) = f(x) \qquad \text{in } B_1.\]
Then $u \in C^{1,\alpha}(B_{1/2})$ for some small $\alpha>0$ and
\[ [u]_{C^{1,\alpha}(B_{1/2})} \leq C (\norm{u}_{L^\infty(\R^n)}+\norm{f}_{L^\infty(B_1)}). \]
and the estimate depends only on $\sigma_0$, $\lambda$, $\Lambda$, $C_1$ and dimension, but not on $\sigma$.
\end{thm}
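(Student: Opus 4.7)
The plan is to deduce Theorem \ref{t:main2} as a direct consequence of Theorem \ref{t:main} applied with $\D^{(1)} = \D^{(2)} = \D$ and $f_1 = f_2 = f$. So the task reduces to verifying that the hypotheses of Theorem \ref{t:main} are all met in the present setting.

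First I would check the hypotheses involving the reference operator $\D^{(0)}$. By definition, the class $\LI_1(\sigma)$ consists of kernels satisfying \eqref{e:siclass1}--\eqref{e:siclass2}; it is contained in $\LI_\ast$ (so Theorem \ref{t:c1a} gives interior $C^{1,\bar\alpha}$ estimates for $\D^{(0)}$) and it is also contained in $\LI_0(\sigma)$. Scale invariance of $\LI_1(\sigma)$ is observed explicitly at the beginning of section \ref{s:estimate}: rescaling a kernel $K(y)$ to $\lambda^{n+\sigma} K(\lambda y)$ preserves both \eqref{e:siclass1} (the homogeneous bounds) and \eqref{e:siclass2} (which is itself written in a scale invariant form). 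So $\D^{(0)}$ lies in a scale invariant class with interior $C^{1,\bar\alpha}$ estimates, exactly as required by Theorem \ref{t:main}.

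Next I would bridge the closeness hypothesis. Theorem \ref{t:main} requires $\norm{\D^{(0)} - \D}_\sigma < \eta$, whereas what we are given is $\norm{\D(-,x) - \D^{(0)}}_\sigma < \eta$ for every $x \in B_1$, i.e.\ closeness of the frozen operator $\D(-,x)$ to $\D^{(0)}$ for every freezing point. The key elementary observation is that for any test function $v$ and any $x \in B_1$ one has
\[ \D(v,x) = \D(-,x)(v,x), \]
since freezing at $x$ and then evaluating at $x$ is the identity. Consequently, for any admissible test function in Definition \ref{d:norm} and any $x \in B_1$,
\[ \frac{|\D(v,x) - \D^{(0)}(v,x)|}{1+M} = \frac{|\D(-,x)(v,x) - \D^{(0)}(v,x)|}{1+M} \leq \norm{\D(-,x) - \D^{(0)}} < \eta. \]
Taking the supremum over $x \in B_1$ yields $\norm{\D - \D^{(0)}} < \eta$, and the same pointwise-in-$x$ argument applied to each rescaled operator $(\D - \D^{(0)})_{1,\lambda}$ promotes this to the scaled norm, giving $\norm{\D - \D^{(0)}}_\sigma < \eta$.

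With these two pieces in hand, the hypotheses of Theorem \ref{t:main} are satisfied: $\D^{(0)}$ is translation invariant and elliptic with respect to a scale invariant class with interior $C^{1,\bar\alpha}$ estimates, $\D$ is elliptic with respect to $\LI_0(\sigma)$, and $\norm{\D^{(0)} - \D}_\sigma < \eta$. Applying Theorem \ref{t:main} with $\D^{(1)} = \D^{(2)} = \D$ and $f_1 = f_2 = f$ produces the $C^{1,\alpha}(B_{1/2})$ estimate with constant depending on the stated parameters but not on $\sigma$, for any $\alpha < \min(\bar\alpha, \sigma_0 - 1)$. The only delicate point in the whole argument is the bookkeeping of the norm comparison in paragraph three, which is purely notational; everything else is a direct invocation of the already-established machinery.
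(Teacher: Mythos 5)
Your proof is correct and supplies the details the paper omits (the paper gives no explicit argument, saying only ``By combining Theorem \ref{t:main} with Theorem \ref{t:c1a}''). The decisive observation---that $\D(v,x)=\D(-,x)(v,x)$ lets you transfer the pointwise closeness of the frozen operators $\D(-,x)$ to the bound $\norm{\D - \D^{(0)}}_\sigma < \eta$, compatibly with the rescaling built into the $\sigma$-norm---is exactly the needed bridge, and you handle it correctly.
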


A concrete example of a nonlinear operator $\D$ such that Theorem \ref{t:main2} applies would be
\[ \D(u,x) = \inf_\alpha \sup_\beta \int_{\R^n} (u(x+y)+u(x-y)-2u(x)) \frac{(2-\sigma)(a_0(y) + a_{\alpha \beta}(x,y))}{|y|^{n+\sigma}} \dd y \]
such that we have for some small $\eta$,
\begin{align*}
 |a_{\alpha \beta}(x,y)| &< \eta \qquad \text{ for every } \alpha, \beta \\
 \lambda &\leq a_0(y) \leq \Lambda \\
 |\grad a_0(y)| &\leq C |y|^{-1}
\end{align*}

Of course, a regularity result that requires coefficients not to vary much with respect to $x$ implies a regularity estimate every time the coefficients are continuous in $x$. We state it in the following theorem.
\begin{thm}
Given $\sigma \geq \sigma_0 > 1$, let $\D(u,x)$ be given by
\[ \D(u,x) = \inf_\alpha \sup_\beta \int_{\R^n} \si u(x,y) \frac{(2-\sigma)a_{\alpha \beta}(x,y)}{|y|^{n+\sigma}} \dd y \]
such that for every $\alpha$, $\beta$ we have $\lambda < a_{\alpha \beta}(x,y) < \Lambda$ and $\grad_y a_{\alpha \beta}(x,y) \leq C_1/((2-\sigma)|y|)$ and also $|a_{\alpha \beta}(x_1,y) - a_{\alpha \beta}(x_2,y)|<c(|x_1-x_2|)$ for some uniform modulus of continuity $c$. Then solutions $u$ to the equation
\[ \D(u,x) = f \qquad \text{in } \Omega \]
are of class $C^{1,\alpha}(B_{1/2})$ for some small $\alpha>0$ and
\[ [u]_{C^{1,\alpha}(B_{1/2})} \leq C (\norm{u}_{L^\infty(\R^n)}+\norm{f}_{L^\infty(B_1)}). \]
and the estimate depends only on $\sigma_0$, $\lambda$, $\Lambda$, $C_1$, the modulus of continuity $c$ and dimension, but not on $\sigma$.
\end{thm}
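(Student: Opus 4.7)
The strategy is to reduce to Theorem \ref{t:main2} by freezing the $x$-dependence at each interior point. It suffices to prove the $C^{1,\alpha}$ estimate in a neighborhood of an arbitrary $x_0 \in B_{1/2}$ with constants independent of $x_0$, after which a standard covering argument gives the bound on $B_{1/2}$. So fix such an $x_0$ and introduce the frozen operator
\[ \D^{(0)}(u,x) = \inf_\alpha \sup_\beta \int_{\R^n} \si u(x,y) \frac{(2-\sigma) a_{\alpha\beta}(x_0, y)}{|y|^{n+\sigma}} \dd y. \]
This operator is translation invariant, and a direct calculation shows that $\lambda \le a_{\alpha\beta} \le \Lambda$ together with the $y$-gradient bound on $a_{\alpha\beta}$ imply the kernel conditions \eqref{e:siclass1}--\eqref{e:siclass2} defining $\LI_1(\sigma)$, so $\D^{(0)}$ is uniformly elliptic with respect to $\LI_1$ with constants depending only on $\lambda,\Lambda,C_1,n$.

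Next I rescale about $x_0$: set $v(z) = u(x_0 + rz)$ for a small $r>0$ to be chosen. Since $\LI_0$ is scale invariant of order $\sigma$, $v$ solves an equation $\tilde \D(v,z) = r^\sigma f(x_0 + rz)$ in $B_1$ with coefficients $\tilde a_{\alpha\beta}(z,\tilde y) = a_{\alpha\beta}(x_0 + rz, r\tilde y)$; these inherit the pointwise bounds and the gradient bound $|\grad_{\tilde y}\tilde a_{\alpha\beta}| \le C_1/((2-\sigma)|\tilde y|)$, and satisfy $|\tilde a_{\alpha\beta}(z_1,\tilde y) - \tilde a_{\alpha\beta}(z_2,\tilde y)| \le c(r|z_1-z_2|)$. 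Let $\tilde \D^{(0)}$ be the operator obtained by freezing $\tilde a$ at $z=0$; the same calculation as above shows it is again translation invariant and uniformly elliptic with respect to $\LI_1(\sigma)$.

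The central technical step is to establish that $\norm{\tilde \D(-,z) - \tilde \D^{(0)}}_\sigma \le C\,c(r)$ uniformly for $z \in B_1$. The Isaacs $\inf$--$\sup$ structure gives
\[ |\tilde \D(w,z) - \tilde \D^{(0)}(w)| \le \sup_{\alpha,\beta} \left| \int_{\R^n} \si w(z',\tilde y) (2-\sigma) \frac{\tilde a_{\alpha\beta}(z,\tilde y) - \tilde a_{\alpha\beta}(0,\tilde y)}{|\tilde y|^{n+\sigma}} \dd \tilde y \right|, \]
and a further rescaling by $\lambda < 1$ only replaces $r\tilde y$ by $r\lambda\tilde y$ in the arguments of the coefficients, so the oscillation in $z$ remains bounded by $c(r)$ at every scale. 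I would then repeat the computation from the proof of Theorem \ref{t:linear}—splitting the integral into $B_1$, where $|\si w| \le M|\tilde y|^2$, and $\R^n \setminus B_1$, where the $L^1(\R^n,\omega)$ control on $w$ applies—to conclude that the scale-invariant norm is indeed $\le C\,c(r)$.

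Finally, pick $r$ small enough that $C\,c(r) < \eta$, the threshold from Theorem \ref{t:main2}. Applying that theorem to $v$ yields $[v]_{C^{1,\alpha}(B_{1/2})} \le C(\norm{v}_{L^\infty(\R^n)} + r^\sigma \norm{f}_{L^\infty(B_1)})$, and unscaling gives the desired bound on $u$ in $B_{r/2}(x_0)$ with constants depending only on the stated data. The main obstacle I expect is the verification of the scale-invariant norm estimate: one must be sure that rescaling does not magnify the $x$-oscillation of the coefficients, and this works precisely because the $x$- and $y$-arguments of $a_{\alpha\beta}$ are dilated by the same factor when one rescales the operator.
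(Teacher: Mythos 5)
Your proof is correct and follows essentially the same route as the paper: freeze the coefficients at $x_0$, observe that the modulus of continuity in $x$ gives a small scale-invariant norm $\norm{\D(-,x)-\D(-,x_0)}_\sigma$ for $x\in B_r(x_0)$ with $c(r)\le\eta$ via the computation from Theorem \ref{t:linear}, and apply Theorem \ref{t:main2} after rescaling $B_r(x_0)$ to $B_1$. You spell out more explicitly than the paper that the kernel bounds \eqref{e:siclass1}--\eqref{e:siclass2} and the gradient condition are invariant under the parabolic rescaling (which is why the scale-invariant norm estimate is uniform in $\lambda<1$), but the argument is the same.
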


\begin{proof}
Around each point $x_0 \in B_{1/2}$ we can find a ball $B_r(x_0)$ so that $|a_{\alpha \beta}(x,y) - a_{\alpha \beta}(x_0,y)|<c(r) \leq \eta$. This implies that $\norm{\D(-,x)-\D(-,x_0)}_\sigma < C\eta$ as in the proof of Theorem \ref{t:linear}, and we apply Theorem \ref{t:main2} with $\D^{(0)} = \D(-,x_0)$, scaled in $B_r(x)$.
\end{proof}

\subsection{Nonlinear equations with constant coefficients but non-differentiable kernels}
It is interesting to note that Theorem \ref{t:main} is useful to obtain new results even in the translation invariant case. In Theorem \ref{t:c1a}, it is required that every kernel must be differentiable away from the origin. That assumption can be relaxed in the following way. We can actually obtain a $C^{1,\alpha}$ estimate for nonlocal equations that are uniformly elliptic respect to the class $\LI$ given by operators with kernel $K$ such that
\begin{align*}
K(y) &= (2-\sigma) \frac{a_1(y) + a_2(y)}{|y|^{n+\sigma}} \\
\lambda &\leq a_1(y) \leq \Lambda \\
|a_2| &\leq \eta \\ 
|\grad a_1(y)| &\leq \frac{C_1}{|y|} \qquad \text{in } \R^n \setminus \{0\}
\end{align*}

\begin{thm}
Assume $\sigma \geq \sigma_0 > 1$. If $\eta$ is small enough (depending on $\lambda$, $\Lambda$, $C_1$ and dimension) then the nonlocal equation
\[ \D(u,x) = \inf_\alpha \sup_\beta \int_{\R^n} (u(x+y)+u(x-y)-2u(x)) K_{\alpha \beta}(y) \dd y  = f(x)\]
has interior $C^{1,\alpha}$ estimates if all kernels belong to the class $\LI$ above.
\end{thm}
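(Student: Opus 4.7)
The plan is to apply Theorem~\ref{t:main} with $\D^{(0)}$ obtained from $\D$ by discarding the $a_2$ part of every kernel. More precisely, for each pair $(\alpha,\beta)$ write $K_{\alpha\beta}(y) = K^{(1)}_{\alpha\beta}(y) + K^{(2)}_{\alpha\beta}(y)$ where
\[ K^{(1)}_{\alpha\beta}(y) = (2-\sigma)\frac{a^{\alpha\beta}_1(y)}{|y|^{n+\sigma}}, \qquad K^{(2)}_{\alpha\beta}(y) = (2-\sigma)\frac{a^{\alpha\beta}_2(y)}{|y|^{n+\sigma}}, \]
and define
\[ \D^{(0)}(u,x) := \inf_\alpha \sup_\beta \int_{\R^n} \si u(x,y)\, K^{(1)}_{\alpha\beta}(y) \dd y. \]
The operator $\D^{(0)}$ is translation invariant and uniformly elliptic with respect to the scale invariant class $\LI_1$ defined in \eqref{e:siclass1}--\eqref{e:siclass2} (ellipticity uses $\lambda \leq a_1 \leq \Lambda$; the $y$-regularity \eqref{e:siclass2} uses $|\grad a_1|\leq C_1/|y|$). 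By Theorem~\ref{t:c1a} (applied to the scale invariant sub-class $\LI_1 \subset \LI_\ast$), $\D^{(0)}$ admits interior $C^{1,\bar\alpha}$ estimates, which is the hypothesis needed in Theorem~\ref{t:main}. Note also that $\D$ itself is uniformly elliptic with respect to $\LI_0(\sigma)$ as long as $\eta<\lambda/2$, since the total coefficient $a_1+a_2$ remains positive and bounded between universal constants.

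The next step is to estimate $\norm{\D^{(0)} - \D}_\sigma$. Using the elementary fact $|\inf_\alpha \sup_\beta F_{\alpha\beta}-\inf_\alpha\sup_\beta G_{\alpha\beta}|\leq \sup_{\alpha,\beta}|F_{\alpha\beta}-G_{\alpha\beta}|$, we bound pointwise
\[ |\D(u,x)-\D^{(0)}(u,x)| \leq \sup_{\alpha,\beta}\left| \int_{\R^n} \si u(x,y)\,(2-\sigma)\frac{a^{\alpha\beta}_2(y)}{|y|^{n+\sigma}}\dd y \right|. \]
Now imitate the computation in the proof of Theorem~\ref{t:linear}. For a test function $u$ as in Definition~\ref{d:norm} (so $\norm{u}_{L^1(\R^n,\omega)}\leq M$ and $|\si u(x,y)|\leq M|y|^2$ for $y\in B_1$, which together imply $|u(x)|\leq CM$), the inner integral against $|a_2|\leq \eta$ splits as
\[ \int_{B_1} |\si u| (2-\sigma)\eta /|y|^{n+\sigma}\dd y \leq CM\eta \quad\text{and}\quad \int_{\R^n\setminus B_1} |\si u| (2-\sigma)\eta/|y|^{n+\sigma}\dd y \leq CM\eta, \]
yielding $|\D(u,x)-\D^{(0)}(u,x)|/(1+M)\leq C\eta$ and hence $\norm{\D^{(0)}-\D}\leq C\eta$.

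To promote this to the scaled norm $\norm{\cdot}_\sigma$, observe that the structural conditions defining $\LI$ are scale invariant: under $y\mapsto \lambda y$ the rescaled kernel becomes $(2-\sigma)(a_1(\lambda y)+a_2(\lambda y))/|y|^{n+\sigma}$, and $\tilde a_1(y)=a_1(\lambda y)$ satisfies exactly the same $\lambda\leq \tilde a_1\leq\Lambda$ and $|\grad \tilde a_1|\leq C_1/|y|$ bounds, while $|\tilde a_2|\leq \eta$ is preserved as well. Therefore the same calculation applied to each rescaling $\D_{1,\lambda}, \D^{(0)}_{1,\lambda}$ yields $\norm{\D_{1,\lambda}-\D^{(0)}_{1,\lambda}}\leq C\eta$ uniformly in $\lambda<1$, so $\norm{\D^{(0)}-\D}_\sigma\leq C\eta$. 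Choosing $\eta$ small enough (relative to the threshold in Theorem~\ref{t:main}), all hypotheses of Theorem~\ref{t:main} are met and the desired $C^{1,\alpha}$ estimate follows.

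The only real subtlety, and thus the main place to be careful, is the verification that the scale invariance built into the class $\LI_1$ persists uniformly through the full rescaling hierarchy despite $\D$ itself not lying in $\LI_1$; this is handled by the observation above that both the $a_1$ piece and the smallness of $a_2$ are preserved verbatim under $y\mapsto \lambda y$. Everything else is a direct transcription of the proof of Theorem~\ref{t:linear} into the nonlinear Isaacs setting, with the inf-sup handled by the standard pointwise envelope estimate.
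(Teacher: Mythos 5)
Your proposal is correct and follows exactly the same route as the paper: decompose each kernel into the regular part $K^1$ (governed by $a_1$) and the small part $K^2$ (governed by $a_2$), define $\D^{(0)}$ as the inf-sup of the $L^1_{\alpha\beta}$, observe that $\D^{(0)}$ is elliptic with respect to $\LI_1$ and that $\norm{\D-\D^{(0)}}_\sigma \leq C\eta$ by the envelope inequality and scale invariance of the structural conditions, then apply Theorem~\ref{t:main}. You have simply filled in the details that the paper's terse two-line proof leaves implicit.
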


\begin{proof}
Let $L \in \LI$ be an operator with kernel $K$, we can write it as $K=K^1+K^2$ where $K^1 =(2-\sigma) \frac{a_1(y)}{|y|^{n+\sigma}}$ and correspondingly $L = L^1 + L^2$. But then we see that $\norm{L-L^1}_\sigma < c \eta$. So we write
\[ \D^{(0)} (u,x) = \inf_\alpha \sup_\beta L^1_{\alpha \beta} u(x) \]
and we have $\norm{\D - \D^{(0)}}_\sigma < c \eta$ so that we can apply Theorem \ref{t:main}.
\end{proof}

This theorem applies for a class that is still smaller than $\LI_0$. It would be interesting to determine whether the class $\LI_0$ has interior $C^{1,\alpha}$ estimates or not. We leave that question open.

\subsection{Nonlinear equations close to the fractional Laplacian. Very regular solutions.}
We can obtain another new result in the translation invariant case from Theorem \ref{t:main}. In this case we look at non linear equations that are sufficiently close to the fractional Laplacian and we obtain $C^{2,\alpha}$ regularity. We provide a theorem on the regularity of fully nonlinear translation invariant nonlocal equations when the ellipticity constants are sufficiently close to each other. This is an improvement on Theorem \ref{t:c1a} when these conditions are satisfied.

\begin{thm} \label{t:c2aIflLareClose}
Assume $\sigma>\sigma_0>1$. There is a $\eta>0$ and $\rho_0>0$ so that if $\Lambda<1+\eta$, $\lambda > 1-\eta$, $\D$ is a nonlocal translation invariant uniformly elliptic operator with respect to $\LI_\ast$ (given in \eqref{e:lic1a1}-\eqref{e:lic1a2}) and $u$ is a continuous function in $\overline B_1$ such that $u \in L^1(\R^n,\omega)$, $\D u = 0$ in $B_1$,
then there is a universal (depends only on $n$ and $\sigma_0$) $\alpha>0$ such that $u \in C^{2+\alpha}(B_{1/2})$ and
\[ u_{C^{2+\alpha}(B_{1/2})} \leq C\left( \sup_{B_1} |u| + \norm{u}_{L^1(\R^n,\omega)} +|I0| \right) \]
for some constant $C>0$ (where by $I0$ we mean the value we obtain when we apply $\D$ to the function that is constant equal to zero). The constant $C$ depends on $\sigma_0$, $n$ and the constant in \eqref{e:lic1a2}.
\end{thm}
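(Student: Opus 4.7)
The plan is to apply Theorem \ref{t:main} twice: first to $u$ itself (yielding $C^{1,\alpha}$ regularity) and then to the first-order difference quotients of $u$ (upgrading the estimate to $C^{2,\alpha}$). The leverage for both applications comes from the observation that when $|\lambda-1|,|\Lambda-1|<\eta$, the fully nonlinear operator $\D$ is, in the $\|\cdot\|_\sigma$ sense, a small perturbation of the linear fractional Laplacian $\D^{(0)}$ corresponding to the kernel $K_0(y)=(2-\sigma)/|y|^{n+\sigma}$, which has interior $C^\infty$ estimates (in particular $C^{1,\bar\alpha}$ for every $\bar\alpha$).

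First I would verify that $\|\D - \D(0,\cdot) - \D^{(0)}\|_\sigma \leq C\eta$. For a test function $w$ as in Definition \ref{d:norm}, the uniform ellipticity of $\D$ with respect to $\LI_\ast$ gives $\D w - \D 0 \in [\MLm w,\MLp w]$, and each of the differences $\MLp w - \D^{(0)} w$ and $\MLm w - \D^{(0)} w$ is bounded by $C\eta(1+M)$ through the same integral computation that appears in the proof of Theorem \ref{t:linear} (it is the uniform smallness $|a-1|<\eta$ of every admissible coefficient that drives the estimate). Rescaling preserves both sides of the inequality, so the bound passes to $\|\cdot\|_\sigma$. A first application of Theorem \ref{t:main} with $\D^{(0)}$ as reference operator then yields $u \in C^{1,\alpha}(B_{1/2})$ for every $\alpha < \sigma_0-1$, with the quantitative bounds of that theorem.

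Next, I would fix a unit vector $e$ and small $h>0$ and introduce the difference quotient $v_h(x) := (u(x+he)-u(x))/h$ together with the $x$-dependent operator $\D^h(w,x) := (\D(u+hw)(x)-\D u(x))/h$. The positive homogeneity of the Pucci operators shows that $\D^h$ inherits the ellipticity of $\D$ with respect to $\LI_\ast$ with the same constants, satisfies $\D^h(0,x)=0$, and---because $\D$ is translation invariant and $\D u \equiv 0$---solves $\D^h v_h = 0$ in $B_{1-|h|}$. Repeating verbatim the calculation of the previous paragraph gives $\|\D^h - \D^{(0)}\|_\sigma \leq C\eta$, with a constant independent of $h$ and of $u$. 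A second application of Theorem \ref{t:main}, now to the pair $(v_h,\D^h)$, produces a $C^{1,\alpha}$ estimate for $v_h$ on a slight shrinking of $B_{1/2}$ that is uniform in $h$; letting $h\to 0$ along coordinate directions then yields $u \in C^{2,\alpha}(B_{1/2})$ with the asserted bound.

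The main obstacle I expect is in the second application of Theorem \ref{t:main}: while $v_h$ is uniformly bounded on $B_{1/2}$ (because $u$ is Lipschitz there by the first step), its globally available $L^1(\R^n,\omega)$ control is only through a difference-quotient bound that need not stay uniform as $h\to 0$ in the tail region. I would handle this in the spirit of the sketch of Theorem \ref{t:c1a}: truncate $v_h$ outside a slightly larger ball, and absorb the resulting far-field contribution into a small bounded right-hand side using the kernel-gradient bound \eqref{e:lic1a2}. Once this tail estimate is in place, the two applications of Theorem \ref{t:main} chain together cleanly and produce the stated $C^{2,\alpha}$ regularity with the correct dependence of constants.
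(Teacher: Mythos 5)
Your proof is correct and follows the same overall strategy as the paper's: bootstrap from the translation-invariant $C^{1,\bar\alpha}$ result by observing that once $|\lambda-1|,|\Lambda-1|<\eta$, the Pucci envelopes $\Mp_{\LI_\ast}$ and $\Mm_{\LI_\ast}$ lie within $c\eta$ of $-(-\lap)^{\sigma/2}$ in the $\norm{\cdot}_\sigma$ norm, and then invoke the approximation Theorem~\ref{t:main} on derivatives of $u$. The packaging differs in two small ways. First, the paper obtains the starting $C^{1,\alpha}(B_{3/4})$ from Theorem~\ref{t:c1a} directly rather than from a preliminary application of Theorem~\ref{t:main}; both are fine. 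Second, and more to the point, since $u$ is then genuinely differentiable the paper works with the actual directional derivative $w=u_e$, decomposes $w=w_1+w_2$ with $w_2$ supported outside a concentric ball (so that the far-field contribution goes into a constant via \eqref{e:lic1a2}), observes that $w_1$ is bounded and satisfies $\Mp_{\LI_\ast} w_1 \geq -C$, $\Mm_{\LI_\ast} w_1 \leq C$, and then applies Theorem~\ref{t:main} with $\D^{(1)}=\Mp_{\LI_\ast}$, $\D^{(2)}=\Mm_{\LI_\ast}$. This sidesteps your $u$-dependent linearized operator $\D^h$ and the limit $h\to 0$. Your scheme is nonetheless legitimate: uniform ellipticity traps $\D^h$ between the same two Pucci envelopes, so the closeness estimate is automatic and uniform in $h$, and the tail issue you flag is exactly what the decomposition $w=w_1+w_2$ (borrowed from the proof of Theorem~\ref{t:c1a}) resolves. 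Once that truncation is in place, letting $h\to 0$ gives the same $C^{2,\alpha}$ conclusion; you simply pay a little extra bookkeeping for not having committed earlier to the existence of $u_e$.
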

 
\begin{proof}
From Theorem \ref{t:c1a}, we know that $u \in C^{1,\alpha}(B_{3/4})$. In particular the function is differentiable. Let $w=u_e$ be a directional derivative. We proceed as in the proof of Theorem \ref{t:c1a} (Theorem 12.1 in \cite{CS}) and write $w = w_1 +w_2$ where $w_2$ vanishes in $B_{5/8}$ and $w_1$ solves
\begin{align*}
\Mp_{\LI_\ast} w_1 &\geq -C && \text{in } B_{5/8} \\ 
\Mm_{\LI_\ast} w_1 &\leq C && \text{in } B_{5/8} 
\end{align*}
The difference with the proof of Theorem 12.1 in \cite{CS} is that now we apply Theorem \ref{t:main} instead of Theorem \ref{t:ca}. Since $\lambda > 1-\eta$ and $\Lambda<1+\eta$, then $\norm{\Mp_{\LI_\ast} w_1 + (-\lap)^\sigma} < c\eta$ and $\norm{\Mm_{\LI_\ast} w_1 + (-\lap)^\sigma} < c\eta$. So Theorem \ref{t:main} tells us that $w = u_e$ is $C^{1,\alpha}$ in $B_{1/2}$, and thus $u \in C^{2,\alpha}(B_{1/2})$. 
\end{proof}

Theorem \ref{t:main} is also useful to reduce the equation to its leading order part. A simple example would be the following.

\begin{thm} \label{t:linearpluslowerorder}
Let $\sigma \in (1,2)$ and $r>0$. Let $u$ be a bounded function that solves the equation
\[\D(u,x) =  -(-\lap)^{\sigma/2} u(x) + \inf_\alpha \sup_\beta \int_{\R^n} \si u(x,y) (2-\sigma) \frac{|y|^r a_{\alpha \beta}(y)}{|y|^{n+\sigma}} \dd y = 0 \qquad \text{in } B_1\]
Assume that $f \in L^\infty(\R^n)$, $r>0$, $|a(y)| < C$, and $|\grad a(y)| \leq C/|y|$. Then $u \in C^{2,\alpha}(B_{1/2})$ for some $\alpha >0$ and
\[ \norm{u}_{C^{2,\alpha}(B_{1/2})} \leq C \norm{u}_{L^\infty(\R^n)}. \]
\end{thm}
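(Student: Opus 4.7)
The plan is to apply Theorem \ref{t:main} twice, following the strategy of Theorem \ref{t:c2aIflLareClose}: first to $u$ itself to obtain interior $C^{1,\alpha}$ regularity, and then to the directional derivatives $u_e$ to upgrade to $C^{2,\alpha}$. In both applications the reference operator is $\D^{(0)} = -(-\lap)^{\sigma/2}$, which lies in the scale invariant class $\LI_1$ and by Theorem \ref{t:c1a} has interior $C^{1,\bar\alpha}$ estimates; in fact, for the fractional Laplacian $\bar \alpha$ can be taken arbitrarily close to $\sigma - 1$.

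The key fact enabling the perturbative argument is that the Isaacs term has a positive scaling weight. Under $u_\rho(x) = u(\rho x)$ the rescaled Isaacs kernels become $(2-\sigma) \rho^r |y|^r a_{\alpha\beta}(\rho y)/|y|^{n+\sigma}$, carrying an extra factor $\rho^r$. A direct estimate in the spirit of the proof of Theorem \ref{t:linear} (using $|a_{\alpha\beta}| \leq C$ and the uniform integrability of $|y|^r/|y|^{n+\sigma}$ against the test functions of Definition \ref{d:norm}, which is valid since $0 < r < \sigma$) yields $\norm{\D_\rho - \D^{(0)}}_\sigma \leq C \rho^r$. Choosing $\rho$ small enough and applying Theorem \ref{t:main} to $u_\rho$ with the reference $\D^{(0)}$ gives $u_\rho \in C^{1,\alpha}(B_{1/2})$; undoing the rescaling and a covering argument produce $u \in C^{1,\alpha}(B_{1/2})$.

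For the upgrade to $C^{2,\alpha}$, I would exploit translation invariance exactly as in the proofs of Theorem \ref{t:c2aIflLareClose} and Theorem 12.1 of \cite{CS}. Because the coefficients $a_{\alpha\beta}$ depend only on $y$, both $u$ and its translates $u(\cdot + he)$ solve $\D u = 0$, so each incremental quotient $(u(\cdot + he) - u)/h$ satisfies the extremal inequalities $\MLp w \geq 0 \geq \MLm w$ where $\MLp, \MLm$ are the extremal operators of the class $\LI$ of kernels of $\D$. Letting $h \to 0$ and splitting $w := u_e = w_1 + w_2$ with $w_2$ supported outside a smaller ball produces a bounded $w_1$ with $\MLp w_1 \geq -C$ and $\MLm w_1 \leq C$ in that smaller ball, where $C$ absorbs the $C^{1,\alpha}$ bound on $u$ and the tail contributions controlled by $|\grad_y a| \leq C/|y|$. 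Since $\MLp, \MLm$ decompose as $-(-\lap)^{\sigma/2}$ plus a lower order piece whose scaled norm is again $O(\rho^r)$, we get $\norm{\MLp - \D^{(0)}}_\sigma, \norm{\MLm - \D^{(0)}}_\sigma \leq C \rho^r$, and a second application of Theorem \ref{t:main} to $w_1$ yields $w_1 \in C^{1,\alpha}$. Hence $u_e \in C^{1,\alpha}$ for every direction $e$, and therefore $u \in C^{2,\alpha}(B_{1/2})$.

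The main technical obstacle is that the extremal operators $\MLp, \MLm$ are not literally uniformly elliptic with respect to $\LI_0(\sigma)$ as the statement of Theorem \ref{t:main} requires: the Isaacs kernels $(2-\sigma)|y|^r a(y)/|y|^{n+\sigma}$ decay only like $|y|^{r-n-\sigma}$ at infinity and hence violate the pointwise upper bound $K \leq (2-\sigma)\Lambda/|y|^{n+\sigma}$ for any fixed $\Lambda$. This is handled by working with the adapted weight $\omega(y) = 1/(1+|y|)^{n+\sigma-r}$, under which Assumptions \ref{a:kernelK} and \ref{a:boundedness} still hold and the stability and approximation results of section \ref{s:approximationresults} go through with only notational changes. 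Equivalently, one separates the leading fractional Laplacian from the lower order part and treats the latter as a small forcing inside Lemma \ref{l:approximation}. In either presentation, the smallness $O(\rho^r)$ of the lower order contribution at every scale $\lambda \leq 1$ is what renders the perturbation genuinely small and preserves the quantitative estimates of Theorem \ref{t:main}.
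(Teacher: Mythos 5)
Your approach is essentially the paper's. The paper's proof is in fact terser than yours: it places the fractional Laplacian inside the $\inf\sup$ so that the kernel becomes $(2-\sigma)(c_\sigma+|y|^r a_{\alpha\beta}(y))/|y|^{n+\sigma}$, observes that rescaling by $\lambda$ inserts a factor $\lambda^r$ in the Isaacs term, concludes that $\norm{\D_{1,\lambda}+(-\lap)^{\sigma/2}}_\sigma<\eta$ for $\lambda$ small, and then refers to the proof of Theorem \ref{t:c2aIflLareClose}. Your two-stage application of Theorem \ref{t:main} (first to $u$, then to the directional derivatives $u_e$ after splitting $w=w_1+w_2$) unpacks exactly what that reference entails. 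Your last paragraph goes beyond the paper in flagging, correctly, that the combined kernel violates the upper bound of $\LI_0(\sigma)$ at infinity because $|y|^r a(y)$ does not stay bounded; the paper glosses over this. Replacing $\omega$ by $1/(1+|y|)^{n+\sigma-r}$, or splitting off the lower order tail as an explicit forcing inside Lemma \ref{l:approximation}, are both reasonable remedies, but note that the adapted weight only satisfies the growth assumption \eqref{e:omegalinear} when $r<\sigma-1$, a restriction which is needed in the iteration of Theorem \ref{t:main} (where $u$ is compared against linear functions and $w_k$ has linear growth) and which neither you nor the paper makes explicit. With that caveat, your proposal is correct and faithful to the paper's argument.
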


\begin{proof}
First of all let us notice that we can place the fractional Laplacian inside the $\inf \sup$ and rewrite the equation as
\[\D(u,x) =  \inf_\alpha \sup_\beta \int_{\R^n} \si u(x,y) (2-\sigma) \frac{c_\sigma + |y|^r a_{\alpha \beta}(y)}{|y|^{n+\sigma}} \dd y = 0 \qquad \text{in } B_1\]
where $c_\sigma$ is the normalization constant of the fractional Laplacian which remains bounded below and above for $\sigma \in (1,2)$.

When we make a rescaling of the equation of order $\sigma$ we obtain
\[ \D_{1,\lambda} = \inf_\alpha \sup_\beta \int_{\R^n} \si u(x,y) (2-\sigma)\frac{1 + \lambda^r |y|^r a_{\alpha \beta}(x,\lambda y)}{|y|^{n+\sigma}} \dd y.\]
Then, for $\lambda$ small enough, we will have $\norm{\D_{1,\lambda}+(-\lap)^{\sigma/2}}_\sigma < \eta$ and we proceed as in the proof of Theorem \ref{t:c2aIflLareClose}.
\end{proof}

\section*{Appendix}

In this appendix, we prove Lemma \ref{l:barrier}. We can give an intuitive proof avoiding lengthy computations by using Lemma \ref{l:stability}. We notice that we did not use Lemma \ref{l:barrier} to prove Lemma \ref{l:stability}.

\begin{proof}[Proof of Lemma \ref{l:barrier}]
For some $r>0$, we will show that if we consider $u_\alpha(x) = ((|x|-1)^+)^\alpha$ and $x_0=(1+r)e_1$, then $M^+ u(x_0) \leq 0$ if $\alpha$ is small enough. By rotation this implies the result for any $x$ such that $|x|=1+r$. If $1<|x|<1+r$, we can still get the result by a scaling centered at $e_1$ (i.e. by considering $\tilde u(x) = ((|x+\lambda e_1|-(1+\lambda))^+)^\alpha$ for $\lambda$ such that $|x_0|-1 = r (1+\lambda)$), since that scaling would only decrease the values of the function elsewhere. This observation implies also that if $M^+ u(x_0) \geq 0$, then also $M^+ u(x) \geq 0$ for all $x$ such that $|x| \leq |x_0|$.

Let $M$ be a large number and let $v_j$ be
\[ v_\alpha(x) = \max \left(\frac{u_\alpha(x) - 1}{\alpha},-M \right). \]
The key observation is the following
\[ \lim_{\alpha \to 0} v_\alpha(x) = \max(\log(|x|-1),-M) = l_M(x)\]
and this convergence holds uniformly on compact sets. 

As $\sigma \to 2$, $M^+_\sigma$ converges to a uniformly elliptic second order differential operator that is comparable to the maximal Pucci operator with ellipticity constants depending on $\lambda$, $\Lambda$ and dimension, that we call $M^+_2$. It is a simple computation that $M^+(D^2 \log(|x|-1)) \to -\infty$ as $|x| \to 1$ (where $M^+(D^2u)$ stands for the standard Pucci operator). So if we pick $r$ small enough and $M$ larger than $-\log r$, then there will be a $\sigma_1$ such that $M^+_\sigma l_M < -1$ if $\sigma > \sigma_1$. Moreover, by choosing $M$ large enough, we can also make sure that $M^+_\sigma l_M < -1$ for $\sigma \leq \sigma_1$. This is how we pick $r$ and $M$.

Assume that the result of the lemma was not true. So there would be a sequence $\alpha_j \to 0$ and $\sigma_j \in [\sigma_0,2]$ such that $M^+_{\sigma_j} u_{\alpha_j} (x_0)\geq 0$, and thus $M^+_{\sigma_j} v_{\alpha_j} \geq 0$ in the annulus $B_{2|x_0|} \setminus B_{|x_0|}$. We extract a subsequence $\sigma_{j_k}$ that converges to some number $\tilde \sigma \in [\sigma_0,2]$. We apply Lemma \ref{l:stability} and we obtain that $M^+_{\tilde \sigma} l_M (x_0) \geq 0$. But this is a contradiction since $M^+_{\tilde \sigma} l_M (x_0) \leq -1$.
\end{proof}

\bibliographystyle{plain}   
\bibliography{nonl}             
\index{Bibliography@\emph{Bibliography}}%
\end{document}